\definecolor{mypink1}{rgb}{0.158, 0.488, 0.178}
\definecolor{suput}{rgb}{0.058, 0.488, 0.08}
\definecolor{gris}{RGB}{138, 149, 151}
\DeclareMathAlphabet{\pazocal}{OMS}{zplm}{m}{n}
\newtheorem{lemma}{Lemma}[section]
\newtheorem{corollary}[lemma]{Corollary}
\newtheorem{theorem}[lemma]{Theorem}
\newtheorem{proposition}[lemma]{Proposition}
\newtheorem{remark}[lemma]{Remark}
\newtheorem{definition}[lemma]{Definition}
\newtheorem{definitions}[lemma]{Definitions}
\newtheorem{example}[lemma]{Example}
\newtheorem{notation}[lemma]{Notation}
\newtheorem{corolary}[lemma]{Corollary}
\newcolumntype{L}[1]{>{\raggedright\let\newline\\\arraybackslash\hspace{0pt}}m{#1}}
\newcolumntype{C}[1]{>{\centering\let\newline\\\arraybackslash\hspace{0pt}}m{#1}}
\newcolumntype{R}[1]{>{\raggedleft\let\newline\\\arraybackslash\hspace{0pt}}m{#1}}
\newcommand{\bK}{\mathbb{K}}
\def\l{\lambda}
\def\a{\alpha}
\def\b{\beta}
\def\g{\gamma}
\def\o{\omega}
\def\GL{\mathop{\hbox{\rm GL}}}
\def\w{\omega}
\def\W{\mathcal{W} }
\def\B{\mathcal{B} }
\def\E{{A}}
\def\id{\mathop{\text{Id}}}
\def\f{\phi}
\newcommand{\Supp}{{\rm supp}}
\newcommand{\C}{{\mathcal{C}}}
\newcommand{\M}{{\mathcal{M}}}
\newcommand{\Ker}{{\rm{Ker}}}
\newcommand{\N}{{\mathbb{N}}}
\newcommand{\Z}{{\mathbb{Z}}}
\newcommand{\K}{{\mathbb{K}}}
\newcommand{\KN}{{\mathbb{K}^{\times}}}
\newcommand{\Q}{{\mathbb{Q}}}
\newcommand{\R}{{\mathbb{R}}}
\newcommand{\complex}{{\mathbb{C}}}
\def\Ad{\mathop{\rm{Adj}}\nolimits_1}
\def\supp{\mathop{\hbox{\rm supp}}}
\def\D{\mathop{\hbox{\rm D}}}
\def\Au{\mathop{\rm{Adj}}\nolimits_2}
\def\Av{\mathop{\rm{Adj}}\nolimits_3}
\def\Aw{\mathop{\rm{Adj}}\nolimits_4}
\def\remove#1{}
\newcommand{\uloopr}[1]{\ar@'{@+{[0,0]+(-4,5)}@+{[0,0]+(0,10)}@+{[0,0] +(4,5)}}^{#1}}
\def\endo{\mathop{\rm End}}
\def\GL{\mathop{\text{GL}}}
\def\aut{\mathop{\text{Aut}}}
\def\w{\omega}
\def\G{\mathcal G}
\def\esc#1{\langle#1\rangle}
\def\an{\mathop{\text{\rm ann}}}
\def\ann{\mathop{\text{\rm ann}}}
\def\im{\mathop{\text{\rm Im}}}
\def\rad{\mathop{\text{\rm rad}}}
\def\asi{\rm{asi}}
\def\ssi{\mathop{\text{\rm ssi}}}
\def\soc{\mathop{\text{Soc}}}
\def\sp{\mathop{\text{\rm span}}}
\def\esc#1{\langle #1\rangle}
\title{Chains in evolution algebras}
\author[Y. Cabrera]{Yolanda Cabrera Casado}
\address{Y. Cabrera Casado: Departamento de Matem\'atica Aplicada, E.T.S.Ingenier\'\i a Inform\'atica, Universidad de M\'alaga, Campus de Teatinos s/n. 29071 M\'alaga.   Spain.}
\email{yolandacc@uma.es}
\author[M. I. Gon\c calves]{Maria Inez Cardoso Gon\c calves}
\address{M. I. Cardoso Gon\c calves: Departamento de Matem\'atica, Universidade Federal de Santa Catarina, Florian\'opolis, SC, 88040-900 - Brazil}
\email{maria.inez@ufsc.br}
\author[D. Gon\c calves]{Daniel Gon\c calves}
\address{D. Gon\c calves: Departamento de Matem\'atica, Universidade Federal de Santa Catarina, Florian\'opolis, SC, 88040-900 - Brazil}
\email{daemig@gmail.com}
\author[D. Mart\'\i n]{Dolores Mart\'\i n Barquero}
\address{D. Mart\'\i n Barquero: Departamento de Matem\'atica Aplicada, Escuela de Ingenier\'\i as Industriales, Universidad de M\'alaga, Campus de Teatinos s/n. 29071 M\'alaga.   Spain.}
\email{dmartin@uma.es}
\author[C. Mart\'\i n]{C\'andido Mart\'\i n Gonz\'alez}
\address{C. Mart\'\i n Gonz\'alez: Departamento de \'Algebra Geometr\'{\i}a y Topolog\'{\i}a, Fa\-cultad de Ciencias, Universidad de M\'alaga, Campus de Teatinos s/n. 29071 M\'alaga.   Spain.}
\email{candido\_m@uma.es}
\thanks{ The first and the last two  authors are supported by the Junta de Andaluc\'{\i}a  through projects  FQM-336 and UMA18-FEDERJA-119 and  by the Spanish Ministerio de Ciencia e Innovaci\'on   through project  PID2019-104236GB-I00,  all of them with FEDER funds. The third author is supported by Conselho Nacional de Desenvolvimento Cient\'ifico e Tecnol\'ogico (CNPq) grant numbers 304487/2017-1 and 406122/2018-0  and Capes-PrInt grant number 88881.310538/2018-01 - Brazil.
}
\begin{document}

\subjclass[2020] {17A60, 17D92.} \keywords{evolution algebra,  diagonalizable evolution algebra, annihilator, socle.}

\begin{abstract}
In this work we approach three-dimensional evolution algebras from certain constructions performed on two-dimensional algebras. More precisely, we provide four different constructions producing three-dimensional evolution algebras from two-dimensional algebras. Also we introduce two parameters, the annihilator stabilizing index and the socle stabilizing index, which are useful tools in the classification theory of these algebras. Finally, we use moduli sets as a convenient way to describe isomorphism classes of algebras.

\end{abstract}

\maketitle

\section{Introduction}
A type of genetic algebras called evolution algebras appeared in \cite{VT}. This type of non-associative algebras arose in order to model Non-Mendelian genetics. For example, the classification into isotopism classes of three dimensional evolution algebras are used to describe the spectrum of genetic patterns of three distinct genotypes during a mitosis process, see \cite{FFN}.
It should also be noted that these algebras have multiple connections with other areas of mathematics, such as graph theory and stochastic processes, see \cite{random, EAG, volterra}. 

The search for invariants to classify algebras is one of the mainstreams in Mathematics. For example, the celebrated Elliot classification program for C*-algebras has attracted the interest of a generation of researches, with outstanding results. In the setting of evolution algebras, the classification of nilpotent algebras is studied in \cite {ElduqueLabra, HA, HA2}. The four dimensional perfect non-simple evolution algebras over a field with mild restrictions are classified in \cite{CKS}. The classification of general evolution algebras over finite fields is given in \cite{FF}. In the paper \cite{Imo}, the three-dimensional evolution algebras over a real field are classified. For three-dimensional evolution algebras over a general field, which verify certain restrictions, the classification is done in \cite {YC, CSV}, resulting in 116 non-isomorphic families. Given the huge number of different types of evolution algebras of dimension 3, it is necessary to classify them according to other characteristics, in order to categorize them in a more efficient and convenient manner. For example, in \cite{CV}, the evolution algebras of dimension three are classified according to properties such as irreducibility or degeneracy. Recently, a new branch of study has arisen in \cite{KS}. Its application in the setting of evolution algebras debuts in \cite{MDDCM}: the determination of evolution algebras with a faithful associative and commutative representation. So far, this idea has been pursued only in dimension $2$.

The philosophy of this work is to approach three-dimensional evolution algebras from certain constructions performed on two-dimensional algebras. We give four constructions producing three-dimensional evolution algebras from two-dimensional algebras. Also we use two parameters, $\asi(A)$ (Definition \ref{annstabin}) and $\ssi(A)$ (Definition \ref{ssi}), which help up to classify these algebras.  
We also use moduli sets (see Subsection~\ref{modulisets}) as a convenient way to describe isomorphism classes of algebras.

 Roughly speaking the study of three-dimensional evolution algebras falls into two disjoint classes:
 those with nonzero annihilator (sections \ref{secann} and \ref{annclas}); 
and the ones with zero annihilator (sections \ref{socchain} and \ref{classoc}). 
The classification of algebras $A$ with nonzero annihilator fall into $5$ disjoint cases depending on two parameters: (1) the values of the annihilating stabilizing index (abbreviated $\asi(A)$)  and (2) the value of $\dim(\an(A))$. In the case $\asi(A)=\dim(\an(A))=1$ all the algebras come from a construction $\Ad(B,\a)$ for a suitable evolution algebra $B$ of dimension two (see Theorem \ref{tornillo}). The construction $\Ad$ is described in Definition \ref{pcons}.

The ($3$-dimensional) algebras $A$ with zero annihilator have nonzero socle, so we can use $\dim(\soc(A))$ as one of the parameters for the classification task. The main results are Proposition~\ref{threedim},
in which $\dim(\soc(A))=3$, Theorem \ref{partwotwo}, Theorem~\ref{parthree} and Theorem \ref{carnosilla},
for the case $\dim(\soc(A))=2$, and Theorem \ref{lbnl} for the case $\dim(\soc(A))=1$. 
The algebras with $\dim(\soc(A))=3$ are either simple or direct sum of evolution algebras of dimension less or equal to $ 2$. 

The evolution algebras $A$ with $\dim(\soc(A))=2$ are separated in three large classes: (1) those whose socle has the extension property (Theorem \ref{partwotwo}); (2) those for which $\soc(A)=\sp(\{e_1,e_2+e_3\})$ with $\{e_i\}$ natural (Theorem \ref{parthree}); and (3) those for which $\soc(A)=\sp(\{e_1+e_2,e_2+e_3\})$ with $\{e_i\}$ natural (Theorem \ref{carnosilla}). In all the cases the construction $\Au$ defined in Definition~$\ref{adjdos}$ will play a fundamental roll. Finally, if $\dim(\soc(A))=1$ we describe these algebras in terms of two new  constructions:  $\Av$ and $\Aw$  (Subsection \ref{unidimsoc}), see Theorem \ref{lbnl} and Theorem \ref{macabel} depending on whether $\soc(A)^2\neq 0$ or $\soc(A)^2=0$, respectively.
\smallskip

At last, notice that a convenient way to describe the isomorphism classes of a collection of algebras is via a moduli set, that is, via an action of a group in a set such that the isomorphism classes of the algebras are in one-to-one correspondence with the orbits of the action. We will provide moduli sets for the three dimensional evolution algebras. In fact, the classification work we develop in this paper can be seen as the description of moduli sets for three-dimensional evolution algebras.

\section{Preliminaries}

An \emph{evolution algebra} over a field $\bK$ is a $\bK$-algebra $A$ which has a basis $\B=\{e_i\}_{i\in \Lambda}$ such that $e_ie_j=0$ for every $i, j \in \Lambda$ with $i\neq j$. Such a basis is called a \emph{natural basis}.
From now on, all the evolution algebras we will consider will be finite dimensional and  $\Lambda$ will denote a finite set $\{1, \dots, n\}$.

Let $A$ be an evolution algebra with a natural basis $\B=\{e_i\}_{i\in \Lambda}$.
Denote by $M_\B=(\omega_{ij})$ the \emph{structure matrix} of $A$ relative to $\B$, i.e., $e_i^2 = \sum_{j\in \Lambda} \omega_{ji}e_j$.

\begin{notation}\label{domingo}\rm 
We use the following notations in this paper:
\begin{enumerate} 
\item We denote by $\N$ the natural numbers including $0$, by $\N^*$ the set $ \N\setminus\{0\}$, by $\Z$ the integers and, if $\K$ is a field, then we denote by $\M_n(\K)$ the algebra of $n\times n$ matrices with coefficientes in $\K$. 
\item For  $n\in \N^{*}$, the notation $\K^n$ stands for the cartesian product 
$\K\buildrel{n}\over{\times\cdots\times}\K$, while the notation $\K^{\esc{n}}$ is used for 
$\K^{\esc{n}}:=\{\l^n\colon \l\in\K\}$ and 
$(\K^\times)^{\esc{n}}:=\{\l^n\colon \l\in\K^\times\}$.
\item For integers $1\le i,j\le n$, we denote by $E_{ij}$ the $n\times n$ matrix obtained by permuting the $i$th and $j$th rows of the identity matrix. For example, for $n=2$ we have $E_{12}=\tiny\begin{pmatrix}0 & 1\cr 1 &0\end{pmatrix},$ while for
$n=3$ we have \[E_{12}=\tiny\begin{pmatrix}0 & 1 & 0\cr 1 &0 & 0\cr 0 & 0 & 1\end{pmatrix}.\]
To be formal we should specify that the matrix size depends on the chosen $n$, but we will refrain from doing so in order to get a lighter notation. The reader should be able to deduce the value of $n$ from the context. 
\item We denote the cyclic group of order two by $\Z_2$, and
if $M$ is a matrix in $\M_n(\K)$ then we use matrix powers $M^i$, for $i\in\Z_2$, with the meaning  $M^0:=\id$ and $M^1:=M$.
\item Let $G$ be a group acting on a set $X$. We denote the set of orbits of $X$ under the action of $G$ by $X/G$.
\item Let $B$ be a $\K$-algebra. We denote by $\endo_{\K}(B)$ the algebra of linear maps from $B$ to $B$ and by $\endo(B)$ that of  endomorphisms of $B$ as algebra.
\item  We denote by $\hbox{Sym}^2(B)$ the $\K$-vector space of all symmetric bilinear forms on $B$.

\end{enumerate}
\end{notation}
\begin{definition}
\rm
Let $A$ be an evolution algebra,
$\B=\{e_i\}_{i\in \Lambda}$ a natural basis and $u=\sum_{i\in \Lambda}\alpha_ie_i$ an element of $A$.
The \emph{support of} $u$ \emph{relative to} $\B$, denoted $\Supp_\B(u)$, is defined as the set $\Supp_\B(u)=\{i\in \Lambda\ \vert \  \alpha_i \neq 0\}$.   If $X \subseteq A$, we put $\Supp_\B (X)= \cup_{x \in X} \; \Supp_\B (x)$. Following \cite{CSV1}, an evolution subalgebra $A'$ of an evolution algebra $A$ is said to have the \emph{extension property} if $A'$ has a natural basis which can be extended to a natural basis of $A$.
\end{definition}
\begin{remark}\label{enfadado}\rm
Assume that $\{e_i\}_{i\in I}$ is a natural basis of an evolution algebra $A$. 
A useful criterion for an element $z$  of $A$ with $z^2\neq 0 $ to be natural (i.e. embeddable in a natural basis) is that $\dim(\sp(\{e_i^2\colon i\in\Supp(z)\}))=1$ (see \cite[Teorema 3.3]{BCS}).
\end{remark}

We recall that an evolution algebra $A$ is non-degenerate if there exists a natural basis $\{e_i\}$ such that $e_i^2\neq 0$ for every $i$. In \cite[Corollary 2.19]{CSV1}, it is proved that this definition does not depend on the chosen basis since to be non-degenerated is equivalent to have $\ann(A)=0$.

With this in mind, we have

\begin{proposition}\label{gutten}
Let $A$ be a non-degenerate evolution algebra, then
\begin{enumerate}
\item Let $ \B=\{e_i\}_{i\in \Lambda}$ be a natural basis of $A$. If $z,z'$ are different elements such that $zz'=0$, then $\Supp(z)\cap\Supp(z')$ has cardinal different from $1$ (supports relative to $\B$).

\item If $A$ is a $3$-dimensional evolution algebra with natural basis $\{e_1,e_2,e_3\}$ and $\dim(A^2)=2$, then any other natural basis of $A$ is (up to permutations and nonzero multiples) of the form
$\{e_1+ke_2,e_1+k'e_2,e_3\}$, where $k,k'$ are different scalars in $\K^\times$.
\end{enumerate}
\end{proposition}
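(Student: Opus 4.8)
The plan for the first assertion is a direct computation. Writing $z=\sum_i\alpha_ie_i$ and $z'=\sum_i\beta_ie_i$ in the natural basis $\B$ and using $e_ie_j=0$ for $i\neq j$, one gets $zz'=\sum_i\alpha_i\beta_i e_i^2$. If $\Supp(z)\cap\Supp(z')$ were a singleton $\{k\}$, then $\alpha_i\beta_i=0$ for every $i\neq k$ while $\alpha_k\beta_k\neq0$, so $zz'=\alpha_k\beta_k e_k^2$, which is nonzero since $A$ is non-degenerate (so $e_k^2\neq0$); this contradicts $zz'=0$. Hence $|\Supp(z)\cap\Supp(z')|\neq1$. (The hypothesis $z\neq z'$ is not actually used here.)

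For the second assertion the plan is to pass to a change-of-basis matrix. First I would recall that non-degeneracy is equivalent to $\ann(A)=0$ (see \cite[Corollary 2.19]{CSV1}), and that this forces $f^2\neq0$ for every member $f$ of every natural basis (otherwise $f\in\ann(A)$). In particular $e_1^2,e_2^2,e_3^2$ are nonzero and span the $2$-dimensional space $A^2$, so there is a dependence relation $w_1e_1^2+w_2e_2^2+w_3e_3^2=0$, unique up to a scalar, with at least two $w_i$ nonzero (if only one were nonzero, some $e_i^2$ would vanish). If all three $w_i$ are nonzero the $e_i^2$ are pairwise non-proportional, and a short argument with the change-of-basis matrix shows that $A$ then has no natural basis other than permutations and rescalings of $\{e_1,e_2,e_3\}$, so there is nothing to prove; thus the substantive case is that exactly one $w_i$ vanishes, and — using the permutation allowed in the statement — we may assume it is $w_3$, so that $e_1^2=\mu e_2^2$ for some $\mu\in\K^\times$ while $e_3^2$ is not proportional to $e_2^2$.

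Now I would take another natural basis $\{f_1,f_2,f_3\}$, write $f_j=\sum_i p_{ij}e_i$ with $P=(p_{ij})$ invertible, substitute $e_1^2=\mu e_2^2$, and use the linear independence of $e_2^2$ and $e_3^2$ to translate the conditions $f_jf_l=0$ ($j\neq l$) into $p_{3j}p_{3l}=0$ and $\mu p_{1j}p_{1l}+p_{2j}p_{2l}=0$. The first family forces exactly one of $p_{31},p_{32},p_{33}$ to be nonzero (at least one by invertibility of $P$, at most one by the equations), so after relabelling we have $f_1,f_2\in\sp\{e_1,e_2\}$ and $f_3=p_{13}e_1+p_{23}e_2+p_{33}e_3$ with $p_{33}\neq0$. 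If one of $p_{11},p_{12},p_{21},p_{22}$ vanished, the remaining equations together with invertibility of $P$ would again force $\{f_1,f_2,f_3\}$ to be a permutation-rescaling of $\{e_1,e_2,e_3\}$, a case already disposed of; so all four are nonzero and $f_1=p_{11}(e_1+ke_2)$, $f_2=p_{12}(e_1+k'e_2)$ with $k=p_{21}/p_{11}$ and $k'=p_{22}/p_{12}$ in $\K^\times$, and $k\neq k'$ because $f_1,f_2$ are independent. Finally, the equations $\mu p_{1j}p_{13}+p_{2j}p_{23}=0$ for $j=1,2$ form a homogeneous system in $(p_{13},p_{23})$ whose determinant is $\mu(p_{11}p_{22}-p_{12}p_{21})\neq0$, hence $p_{13}=p_{23}=0$ and $f_3=p_{33}e_3$; so, up to permutation and nonzero multiples, $\{f_1,f_2,f_3\}=\{e_1+ke_2,e_1+k'e_2,e_3\}$, and the converse (that such triples are indeed natural bases for suitable $k,k'$) is a routine check. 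The main obstacle I anticipate is not any single computation but keeping the case analysis clean: identifying precisely when some $p_{ij}$ vanishes, or when the $e_i^2$ have no proportional pair, and verifying that each such branch only reproduces a permutation-rescaling of $\{e_1,e_2,e_3\}$, while tracking which instance of non-degeneracy or of invertibility of $P$ is invoked at each step.
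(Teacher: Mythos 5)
Your proof of part (1) is correct and is the same computation as the paper's (both reduce $zz'=0$ with a singleton common support $\{k\}$ to $e_k^2=0$, contradicting non-degeneracy; and indeed neither uses $z\neq z'$). For part (2) your argument is also correct, but it follows a genuinely different route. The paper first invokes the naturality criterion of Remark~\ref{enfadado} (from \cite[Theorem 3.3]{BCS}) to show that $\dim(A^2)=2$ forces every natural vector to have support of cardinal at most $2$, and then uses part (1) to eliminate the support patterns $(2,2,2)$ and $(2,1,1)$, leaving only $(2,2,1)$ with supports $\{i,j\},\{i,j\},\{k\}$ (and the trivial $(1,1,1)$ case). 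You instead work directly with the change-of-basis matrix $P$ and the one-dimensional kernel of $(c_1,c_2,c_3)\mapsto\sum_i c_ie_i^2$, splitting on whether the dependence relation has all coefficients nonzero (only monomial $P$ survive) or exactly one zero (the substantive case, where the equations $p_{3j}p_{3l}=0$ and $\mu p_{1j}p_{1l}+p_{2j}p_{2l}=0$ pin down $P$). I checked the delicate points -- the "short argument" in the all-$w_i$-nonzero case does go through (two nonzero $\lambda_{jl}$'s force two proportional columns of $P$), and the determinant $\mu(p_{11}p_{22}-p_{12}p_{21})$ is nonzero because $\det P=p_{33}(p_{11}p_{22}-p_{12}p_{21})$. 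What each approach buys: the paper's is shorter because the combinatorics is offloaded to part (1) and the cited criterion, while yours is self-contained (no appeal to \cite{BCS}) and, as a by-product, extracts the extra relation $\mu+kk'=0$ identifying exactly which pairs $(k,k')$ actually occur. Both arguments share the same mild looseness as the statement itself: a permutation-rescaling of $\{e_1,e_2,e_3\}$ is also a possible "other natural basis" and is not literally of the displayed form with $k,k'\in\K^\times$; the paper records this as the $(1,1,1)$ case and you dispose of it as "nothing to prove," which is consistent.
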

\begin{proof}
Consider $z,z'$  different and $zz'=0$, if the intersection of their supports has cardinal $1$ (say $i\in\Supp(z)\cap\Supp(z')$), then $0=e_i^2$ contradicting the fact that $A$ is non-degenerate. Let us prove the second assertion.
Since $\dim(A^2)=2$ any natural vector has support of cardinal less or equal to $2$. Indeed, suppose there is a natural element with support of cardinal 3. Then, by Remark~\ref{enfadado}, $\dim(\sp(\{e_i^2:i=1,2,3\}))=1$. This implies that $\dim(A^2)=1$, a contradiction.

A priori, the cardinals of the support of the three elements in a natural basis have the possibilities:
$$(2,2,2), (2,2,1), (2,1,1), (1,1,1).$$
Let us check that the first possibility, $(2,2,2)$, implies a contradiction. 
Since the three supports can not be equal, there are two of them whose intersection has cardinal one, which is impossible. The second possibility $(2,2,1)$ gives that the supports are  (up to permutations and scaling if necessary): $\{1,2\},\{1,2\},\{3\}$.
So the basis is as in the statement of the proposition. The third possibility 
implies again that the support of cardinal $2$ has intersection with some of the supports of cardinal one, a contradiction. The last possibility, $(1,1,1)$, implies that the basis is a reordering and scaling of the original one.
\end{proof}

\subsection{Moduli sets}\label{modulisets}
In this subsection we introduce the moduli sets that we will be used  to describe isomorphism classes of algebras, more concretely in our case   to describe isomorphism classes  of three-dimensional evolution algebras.
\begin{definition}\rm
For a class of $\K$-algebras $\C$, we will say that $(G,\M)$ is a {\it moduli set} for $\C$ if: (1) $G$ is a group, $\M$ is a $G$-set and (2) the set of isomorphism classes of algebras in $\C$ is in one-to-one correspondence with the orbits of $\M/G$, that is, orbits of $\M$ under the action of $G$.
\end{definition} 
 An easy example from the theory of evolution algebras is the following.
 \begin{example}\rm
 Consider the class of two-dimensional simple evolution algebras over a fixed field $\K$.
It is easy to check that for any algebra in this class there is a natural basis such that the structure matrix of the algebra, relative to this basis, is of one of the following forms:
\begin{align}
\hbox{Type I }&\qquad {\tiny \begin{pmatrix}1 & y\cr x & 1\end{pmatrix}}, \hbox{ with } x y-1\ne 0, x,y\in\K^\times.\cr
\hbox{Type II}&\qquad {\tiny \begin{pmatrix}0 & y\cr x & 1\end{pmatrix}}, \hbox{ with } x y\ne 0.\cr
\hbox{Type III}&\qquad {\tiny \begin{pmatrix}0 & y\cr x & 0\end{pmatrix}}, \hbox{ with } x y\ne 0.
\end{align}
Let us consider, for instance, the class of algebras of  type I. It can be checked that two algebras of type I are isomorphic if and only if either they have the same structure matrix or  the structure matrix of one of them is ${\tiny \begin{pmatrix}1 & y\cr x & 1\end{pmatrix}}$  and the other one is ${\tiny \begin{pmatrix}1 & x\cr y & 1\end{pmatrix}}$. Thus, we can define a moduli set $(\mathbb{F}_2,\M)$, where 
\[\M=\{{\tiny \begin{pmatrix}1 & x\cr y & 1\end{pmatrix}}\colon x y\ne 1, x,y\in\K^\times\}\]
and take the (multiplicative) group $\mathbb{F}_2=\{\pm1\}$
 acting on $\M$ in such a way that $1\in\mathbb{F}_2$ acts as the identity, while 
\[(-1)\cdot {\tiny \begin{pmatrix}1 & x\cr y & 1\end{pmatrix}}:={\tiny \begin{pmatrix}1 & y\cr x & 1\end{pmatrix}}.\]
Therefore the isomorphism classes of algebras of type I are in one to one correspondence with the $\mathbb{F}_2$-set $\M/\mathbb{F}_2$. In other words, the moduli set $(\mathbb{F}_2,\M)$ classifies the algebras of type I.
Furthermore, we can identify $\M$ with the 
Zarisky open subset 
\[\{(x,y)\in\K^2\colon x y\ne 1, x,y\in\K^\times\}\]
modulo the identification of each $(x,y)$ with $(y,x)$.

If we had $\K=\R$, then we could represent this $\mathbb{F}_2$-set in the real plane, removing the axis and the graphic of the function $y=1/x$, and identifying symmetric points relative to the line $y=x$:
\begin{center}
\begin{tikzpicture}
      \draw[->] (-3,0) -- (3,0) node[right] {$x$};
      \draw[->] (0,-2) -- (0,2) node[above] {$y$};
      \draw[scale=0.5,domain=0.51:6,smooth,variable=\x,blue] plot ({\x},{2/\x});
      \draw[scale=0.5,domain=-6:-0.51,smooth,variable=\y,blue]  plot ({\y},{2/\y});%
      \draw[scale=0.5,domain=-4:4,smooth,variable=\x,purple] plot ({\x},{\x});
      \node[] at (-2,-0.5) {\tiny $y=1/x$};
      \node[] at (2,2.2) {\tiny $y=x$};
      \node[] at (2,1) {\tiny zone I};
      \node[] at (.8,.2) {\tiny zone II};
      \node[] at (.8,-.5) {\tiny zone III};
      \node[] at (-.3,-.7) {\tiny zone};
      \node[] at (-.2,-1) {\tiny IV};
      \node[] at (-.85,-1.5) {\tiny zone V};
    \end{tikzpicture}
\end{center}
so that we would have $5$ zones and each point representing an isomorphism class of algebras of type I. The fact that the set consists of $5$ connected components has to do with the existence of $5$ homotopy classes of algebras of type I. However we will not pursue this homotopy ideas further.
\end{example}
Complementing our comment about moduli sets in the introduction, we observe that once we have constructed a moduli set $(\M,G)$ for a class of algebras $\C$, we have parametrized the isomorphism classes of $\C$ by the orbits of $\M/G$ and so this is a useful tool for classification tasks. Next we describe some of the moduli sets that will be used in our work (some of the moduli set that will appear require further background to be described, and so we will introduce then as needed). 

\subsubsection*{$(\D_2(\K)\rtimes\Z_2,\K^2\setminus\{0\})$:} Among the moduli sets used in the classification of three-dimensional evolution algebras, we will use the group of non uniform scales jointly with the symmetries defined below. Roughly speaking, this moduli set consists of the  diagonal group of $2\times 2$ invertible matrices plus a symmetry. 

More precisely, let $\D_2(\K)=\tiny\left\{ \begin{pmatrix} \lambda_1 & 0 \\ 0 & \lambda_2 \end{pmatrix}: \, \, \lambda_i \in \KN\right\}$ be the group of diagonal matrices. 
Consider the subgroup of $\GL_2(\K)$ generated by $\D_2(\K)$ and $ E_{12}=\tiny\begin{pmatrix}0 & 1\cr 1 & 0\end{pmatrix}$, which we will denote by
$\D_2(\K)\rtimes\Z_2$. We have 
\[\D\nolimits_2(\K)\rtimes\Z_2=\left\{\begin{pmatrix}\l_1 & 0\cr 0 & \l_2\end{pmatrix}\colon \l_i\in\K^\times \right\}\sqcup \left\{\begin{pmatrix}0 & \l_1\cr \l_2 & 0\end{pmatrix} : \l_i \in \K^{\times}\right\}\]

This is a subgroup of the general linear group $\GL_2(\K)$, therefore there is an induced representation $(\D_2(\K)\rtimes\Z_2)\times (\K^2\setminus\{0\})\to (\K^2\setminus\{0\})$ such that, 
for $M\in \D_2(\K)\rtimes\Z_2$ and 
$v\in \K^2\setminus\{0\}$ (as column vector), the action $M v$ is the usual multiplication. 
So we have a moduli set $(\D_2(\K)\rtimes\Z_2,\K^2\setminus\{0\})$.
Observe that two vectors $v,v'\in\K^2\setminus\{0\}$ are in the same orbit under the action of $\D_2(\K)\rtimes\Z_2$ if and only if the number of zero entries in $v$ coincide with the number of zero entries in $v'$. A set of representatives of the orbits are $(1,0)$ and $(1,1)$.

\subsubsection*{$((\K^\times)^{\esc{2}},\K^\times)$:} This moduli set is constructed by considering  the group 
$(\K^\times)^{\esc{2}}:=\{k^2\colon k\in\K^\times\}$ and its natural action on the set $\K^\times$
given by 
\begin{equation}\label{radiola}
(\K^\times)^{\esc{2}}\times\K^\times\to\K^\times,
\end{equation}
such that for any $g\in (\K^\times)^{\esc{2}}$ and 
$\l\in\K^\times$ we have $g\cdot\l=g\l$. Note that the cardinal of the set of orbits $\K^\times/(\K^\times)^{\esc{2}}$ depends greatly of the nature of the ground field $\K$. For instance, if $\K$ is algebraically closed it has cardinal one. If $\K=\R$, then $\K^\times/(\K^\times)^{\esc{2}}$ has cardinal $2$ and if $\K=\Q$, then there are countable many orbits.

\section{Annihilator chain}\label{secann}

In this section we will define the construction of new algebras by the procedure of adjunction of type one. We will study the isomorphism problem for this class of algebras. In order to do this work we
use the upper annihilating series and we define the annihilator stabilizing index of an algebra. We will apply all this tools and results in the next section  to classify the three-dimensional evolution algebras with non-zero annihilator.

We recall that given a nonassociative algebra $A$, we have the following sequences of subspaces:
\begin{align*}
A^0 & = 0, & A^1=A,  &  &A^{k+1}=\sum_{i=1}^{k}A^i A^{k+1-i} \, \, {\rm for} \,\, k >1.
\end{align*}

If $J$ is an ideal of $A$, we consider $J$ as an algebra and the powers of $J$ are defined as in the previous case. An algebra (ideal) $A$ is {\it nilpotent} if there exist $n \in \N^*$ such that $A^n=0$. 

\begin{definition}\label{tamandua}\rm
Let $A$ be an algebra and denote by $\mathcal{P}(A)$ the power set of $A$.
We define the map $\rho_A\colon \mathcal{P}(A) \to \mathcal{P}(A)$, where
$\rho_A(S)=SA \,\cup\, AS=\{sa \colon s \in S, a \in A\} \cup \{as \colon s \in S, a \in A\}$ for any $S \in \mathcal{P}(A)$. We denote  $\rho_A(x):=\rho_A(\{x\})$. 
\end{definition}
In our classification results we will make use of the upper annihilating series, as defined in \cite[Definition 3.3]{ElduqueLabra}. We recall this definition bellow.
\begin{definition}\rm
Let $A$ be an algebra. We define $\an^{(0)}(A): =\{0\}$   and $\an^{(i)}(A)$  in the following way 
\[\an\nolimits^{(i)}(A)/\an\nolimits^{(i-1)}(A): =\an\nolimits(A/ (\an\nolimits^{(i-1)}(A)).\]
The chain of ideals:
\[\{0\}=\an\nolimits^{(0)}(A)\subseteq \an\nolimits^{(1)}(A)\subseteq \cdots \subseteq \an\nolimits^{(i)}(A)\subseteq \cdots\]
is called the {\it upper annihilating series}.
\end{definition}

Observe that $\an^{(1)}(A)=\an(A)=\{x \in A : xA=Ax=0\}$.

Next we define the key classifying parameter in this section, namely the annihilator stabilizing index.

\begin{definition}\label{annstabin}\rm
Let $A$ be an algebra. If there exists $k$ such that $k=\text{min}\{ q \colon \an^{(q)}(A)=\an^{(q+1)}(A)\}$, then we call it the \emph{annihilator stabilizing index} of $A$, denoted by $\asi(A)$.
\end{definition}

\begin{remark}
 Observe that if $A$ is finite dimensional then $\asi(A)$ always exists. If the chain of annihilators does not stabilize, we write $\asi(A)=\infty$.
\end{remark}

Using the map $\rho_A$ of Definition~\ref{tamandua} we obtain the following useful description of $\an^{(i)}(A)$.

\begin{lemma}\label{zuru}
Let $A$ be an algebra. Then:
\begin{enumerate}[\rm (i)]
    \item \label{zuru1} $\an^{(i)}(A)=\{x\in A \colon xA \, \cup \, Ax\subset \an^{(i-1)}(A)\}$ for $i\ge 1$.
    \item \label{zuru2} $\an^{(i)}(A)=\{x\in A \colon \rho_A^{k}(x)  \subset \an^{(i-k)}(A) \text{ for all } k\in\{0,1,\dots ,i\}\}$.
\end{enumerate}
\end{lemma}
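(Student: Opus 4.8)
The plan is to prove both items by induction on $i$, working directly from the recursive definition of the upper annihilating series. For item \eqref{zuru1}, recall that $\an^{(i)}(A)/\an^{(i-1)}(A) = \an(A/\an^{(i-1)}(A))$. Writing $\bar A = A/\an^{(i-1)}(A)$ and $\pi\colon A\to\bar A$ for the quotient map, an element $x\in A$ satisfies $\pi(x)\in\an(\bar A)$ if and only if $\pi(x)\bar A = \bar A\pi(x) = 0$, which by linearity of $\pi$ and the definition of the product on the quotient is equivalent to $\pi(xa)=\pi(ax)=0$ for all $a\in A$, i.e. $xa, ax\in\an^{(i-1)}(A)$ for all $a\in A$. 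Since $xA\cup Ax \subseteq \an^{(i-1)}(A)$ is precisely this condition, and since $\an^{(i-1)}(A)\subseteq\an^{(i)}(A)$ forces $x\in\an^{(i)}(A)$ to already lie in the preimage $\pi^{-1}(\an(\bar A))$, we get the stated equality. The base case $i=1$ is exactly the observation $\an^{(1)}(A)=\an(A)=\{x : xA=Ax=0\}$ recorded just before the lemma, with $\an^{(0)}(A)=\{0\}$.

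For item \eqref{zuru2}, I would rephrase \eqref{zuru1} in terms of $\rho_A$: since $\rho_A(x) = xA\cup Ax$, item \eqref{zuru1} reads $\an^{(i)}(A) = \{x\in A : \rho_A(x)\subseteq \an^{(i-1)}(A)\}$. Now induct on $i$. For $k=0$ the condition $\rho_A^0(x)=\{x\}\subseteq\an^{(i)}(A)$ is trivially the membership statement itself; for $k=1$ it is \eqref{zuru1}. For the inductive step, suppose $x\in\an^{(i)}(A)$. By \eqref{zuru1}, every element $y\in\rho_A(x)$ lies in $\an^{(i-1)}(A)$, so by the induction hypothesis applied to $i-1$ we have $\rho_A^{k-1}(y)\subseteq\an^{((i-1)-(k-1))}(A)=\an^{(i-k)}(A)$ for all $k-1\in\{0,\dots,i-1\}$. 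Since $\rho_A^{k}(x)=\rho_A^{k-1}(\rho_A(x))=\bigcup_{y\in\rho_A(x)}\rho_A^{k-1}(y)$, this gives $\rho_A^{k}(x)\subseteq\an^{(i-k)}(A)$ for all $k\in\{1,\dots,i\}$, and together with the trivial $k=0$ case we obtain one inclusion. Conversely, if $\rho_A^{k}(x)\subseteq\an^{(i-k)}(A)$ for all $k\in\{0,\dots,i\}$, then in particular $\rho_A(x)\subseteq\an^{(i-1)}(A)$, so $x\in\an^{(i)}(A)$ by \eqref{zuru1}.

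I do not expect a serious obstacle here; the only point requiring care is the bookkeeping of indices in the induction for \eqref{zuru2}, specifically making sure that the range $k\in\{0,1,\dots,i\}$ for $\an^{(i)}(A)$ matches the range $k-1\in\{0,1,\dots,i-1\}$ coming from the hypothesis at level $i-1$, and that $\rho_A$ commutes with unions so that $\rho_A^k(x)$ can be computed by iterating over $\rho_A(x)$. One should also note that $\an^{(j)}(A)$ for $j\le 0$ is interpreted as $\{0\}$, which is consistent with $\an^{(0)}(A)=\{0\}$ and is what the $k=i$ term of \eqref{zuru2} requires.
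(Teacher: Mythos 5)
Your proposal is correct and follows essentially the same route as the paper: item \eqref{zuru1} is obtained by unwinding the definition $\an^{(i)}(A)/\an^{(i-1)}(A)=\an(A/\an^{(i-1)}(A))$ via the quotient map, and item \eqref{zuru2} by iterating \eqref{zuru1} (your explicit induction on $i$ just makes precise what the paper phrases as ``iterating $k$ times''). The only cosmetic difference is that for the reverse inclusion in \eqref{zuru2} the paper simply reads off the $k=0$ case, whereas you use the $k=1$ case together with \eqref{zuru1}; both are valid.
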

\begin{proof}

First we prove \eqref{zuru1}. If $a \in \an^{(i)}(A)$, then $aA \, \cup \, Aa \subset \an^{(i-1)}(A)$ by definition. For the other inclusion, notice that if $aA \, \cup \, Aa \subset \an^{(i-1)}(A)$, then $\bar{a}\left(A/\an\nolimits^{(i-1)}(A)\right)=\bar{0}$ and $\left(A/\an^{(i-1)}(A)\right)\bar{a}=\bar{0}$, so  $ \bar{a}\in \an\left(A/\an\nolimits^{(i-1)}(A)\right)= 
\an\nolimits^{(i)}(A)/\an\nolimits^{(i-1)}(A).$
Therefore $a\in \an^{(i)}(A)$.
For \eqref{zuru2}, first recall that if $k=0$, then  $\rho_A^0$ is the identity map. For $k=1$, if $x \in \an^{(i)}(A)$ then we have by definition that $\rho_A(x) \subset \an^{(i-1)}(A)$. Hence  $\rho_A^2(x)\subset \an^{(i-2)}(A)$ (by \eqref{zuru1}) and iterating $k$ times we obtain that $\rho_A^k(x)\subset \an^{(i-k)}(A)$. For the other inclusion, let $x \in A$ be such that $\rho_A^k(x)\subset \an^{(i-k)}(A)$ for all $k\in\{0,1, \dots , i\}$. In particular, for $k=0$ we get  $x \in \an^{(i)}(A)$. \end{proof}

Before we proceed to the next subsection we study the relation between the absorption radical of an algebra $A$ and its upper annihilating series. We start with the definition of the absorption property and absorption radical.

\begin{definition} \rm
Let $I$ be an ideal of an algebra $A$. We say that $I$ has the \emph{absorption property} if $xA \, \cup \, Ax \subset I$ implies $ x \in I$. The \emph{absorption radical} of $A$ is the intersection of all ideals of $A$ having the absorption property, denoted by $\rad(A)$.
\end{definition}

\begin{proposition}\label{musiquita}
Let $A$ be an algebra.
\begin{enumerate} [\rm (i)]
    \item \label{hig1} $x\in \an^{(k)}(A)$ if and only if $\rho_A^k(x)=0$.
    \item \label{hig2}$\an^{(k)}(A)\subseteq  \rad(A)$ for any $k$.
    \item\label{hig3} If there exists an annihilator stabilizing index $k$, then $\an^{(k)}(A)$ is an absorption ideal. Therefore  $\rad(A)=\an^{(k)}(A)$. 
    \item\label{hig4}$\an^{(i)}(A)$ is a nilpotent ideal of $A$ for all $i$.
       \end{enumerate}
\end{proposition}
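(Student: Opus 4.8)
The plan is to prove the four items essentially in the order stated, since each feeds into the next. For \eqref{hig1}, I would argue by induction on $k$ using the description of $\an^{(i)}(A)$ from Lemma~\ref{zuru}\eqref{zuru2}. The case $k=0$ is trivial ($\an^{(0)}(A)=0$ and $\rho_A^0(x)=\{x\}$). For the inductive step, note that $x\in\an^{(k)}(A)$ iff $\rho_A(x)\subset\an^{(k-1)}(A)$ by Lemma~\ref{zuru}\eqref{zuru1}; by the inductive hypothesis applied to each element of $\rho_A(x)$, this holds iff $\rho_A^{k-1}(y)=0$ for every $y\in\rho_A(x)$, i.e.\ iff $\rho_A^k(x)=0$. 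One must be a little careful that $\rho_A$ is a set-valued map and that $\rho_A^{k}(x)=\bigcup_{y\in\rho_A(x)}\rho_A^{k-1}(y)$, but this is immediate from Definition~\ref{tamandua}.

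For \eqref{hig2}, let $I$ be any ideal with the absorption property; I claim $\an^{(k)}(A)\subseteq I$ for all $k$, again by induction on $k$. The base case $k=0$ is clear. If $\an^{(k-1)}(A)\subseteq I$, then for $x\in\an^{(k)}(A)$ we have $xA\cup Ax\subset\an^{(k-1)}(A)\subseteq I$ by Lemma~\ref{zuru}\eqref{zuru1}, so the absorption property of $I$ forces $x\in I$. Intersecting over all such $I$ gives $\an^{(k)}(A)\subseteq\rad(A)$.

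For \eqref{hig3}, suppose $k=\asi(A)$, so $\an^{(k)}(A)=\an^{(k+1)}(A)$. To see $\an^{(k)}(A)$ has the absorption property, take $x\in A$ with $xA\cup Ax\subset\an^{(k)}(A)$; then $x\in\an^{(k+1)}(A)=\an^{(k)}(A)$ by Lemma~\ref{zuru}\eqref{zuru1}. Hence $\an^{(k)}(A)$ is one of the ideals in the intersection defining $\rad(A)$, so $\rad(A)\subseteq\an^{(k)}(A)$; combined with \eqref{hig2} this yields $\rad(A)=\an^{(k)}(A)$. Finally, for \eqref{hig4}, I would show by induction on $i$ that $\an^{(i)}(A)^{2^{i}}=0$ (or some similar explicit bound): writing $N_i:=\an^{(i)}(A)$, we have $N_i N_i\subseteq \rho_A(N_i)\subseteq N_{i-1}$, and more generally any product of $2^i$ elements of $N_i$ lands in $N_0=0$ after regrouping, because each multiplication drops the index by one in the sense of \eqref{hig1}. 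Concretely, an arbitrary product of elements of $N_i$ in $N_i^{m}$ for $m\ge 2$ can be written using elements of $\rho_A(N_i)\subseteq N_{i-1}$, so $N_i^{m}\subseteq N_{i-1}^{\lceil m/2\rceil}$ roughly speaking, and iterating reaches $N_0=0$; the cleanest phrasing is that $\rho_A^{i}(y)=0$ for every $y\in N_i$ by \eqref{hig1}, and every element of a sufficiently high power of $N_i$ is a sum of elements obtained by applying $\rho_A$ at least $i$ times to elements of $N_i$, hence is $0$.

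The step I expect to require the most care is \eqref{hig4}: the nonassociativity means one cannot simply say "a product of $n$ elements" without fixing a bracketing, so the bookkeeping relating powers $N_i^{m}$ to iterated applications of $\rho_A$ must be done with the recursive definition $A^{k+1}=\sum_{j=1}^{k}A^jA^{k+1-j}$ in hand. The key observation that makes it go through is \eqref{hig1}: membership in $\an^{(i)}(A)$ is exactly "annihilated after $i$ rounds of $\rho_A$", and any bracketed product of more than $2^{i}$ factors from $N_i$ necessarily contains, along every branch of its parse tree, a chain of $i$ nested multiplications, each of which sends $N_j$ into $N_{j-1}$; so the whole product lies in $N_0=0$. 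I would state this with an explicit nilpotency exponent (e.g.\ $N_i^{\,2^i+1}=0$) to avoid vagueness, though a looser bound suffices for the applications.
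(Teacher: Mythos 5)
Your proof is correct and follows essentially the same route as the paper: item (i) is just the $k=i$ case of Lemma~\ref{zuru}(ii) (which you re-derive by induction instead of citing), items (ii) and (iii) match the paper's arguments step for step, and for (iv) the paper establishes the same bound $\an^{(i)}(A)^{2^i}=0$ via the chain of inclusions $\an^{(i-\alpha)}(A)\supseteq J^{2^\alpha}$, which is your iterated-$\rho_A$ argument recast as power counting. The only quibble is that your parenthetical claim that \emph{every} branch of the parse tree contains $i$ nested multiplications is not literally true (one only needs some leaf at depth $\ge i$, whose $i$ successive multiplications toward the root already kill the product), but your own ``cleanest phrasing'' via $\rho_A^i(y)=0$ states the correct version.
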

\begin{proof}
For \eqref{hig1} we take $k=i$ in Lemma \ref{zuru} \eqref{zuru2}. 

\eqref{hig2}  We will prove that 
$\an^{(k)}(A)\subset I$ for any absorbent ideal $I$ of $A$.
This is trivially true for $k=0$. Assume that 
$\an^{(0)}(A),\ldots, \an^{(k-1)}(A)$ are contained in $I$.
We prove that $\an^{(k)}(A)\subset I$. For this, let $x\in \an^{(k)}(A)$. Then, by item (\ref{zuru1}) in Lemma~\ref{zuru}, we have that $xA$ and $Ax$ are contained in $\an^{(k-1)}(A)\subset  I$. Since $I$ is absorbent we conclude that $x\in I$.

 \eqref{hig3}
 Let $k:=\asi(A)$. We prove that  $\rad(A)\subset \an^{(k)}(A)$. For this, it suffices to prove that $\an^{(k)}(A)$ is absorbent.
 Suppose that $xA \, \cup \, Ax \subseteq \an^{(k)}(A)$. We have to prove that $x \in \an^{(k)}(A)$.  Since $xA \, \cup \, Ax \subseteq \an^{(k)}(A)$, we have that $\bar x \in \an \left( A/ \an^{(k)}(A)\right)=\an^{(k+1)}(A)/\an^{(k)}(A)$
$= \left\{ \bar 0 \right\},$ which implies that $x \in \an^{(k)}(A)$. Hence $\rad(A) \subseteq \an^{(k)}(A)$. By \eqref{hig2} we obtain that $\rad(A)=\an^{(k)}(A)$.

\eqref{hig4} Let $J=\an\nolimits^{(i)}(A)$ and observe that $J \supseteq J^2 \supseteq J^3 \supseteq \ldots$. Moreover $J^2=JJ \subseteq \an\nolimits^{(i-1)}(A)$ by \eqref{zuru1}. Then $\an\nolimits^{(i-1)}(A) \supseteq J^2 \supseteq J^3 \supseteq J^4 \supseteq \ldots$, so $\an\nolimits ^{(i-2)}(A)\supseteq J^4=JJ^3+J^2J^2+J^3J \supseteq J^5 \supseteq J^6 \ldots$ Reasoning in the same way we get $\an\nolimits ^{(i-\alpha)}(A)\supseteq J^{2^\alpha} $. In particular, for $\alpha=i$ we have that  $J^{2^i}=0 $.
\end{proof}

\begin{remark} The absorption radical in an evolution algebra is a basic ideal, see \cite[Proposition 3.1]{YE}.
\end{remark}

\subsection{Adjunction of type one.}

In this subsection we will study how to construct an evolution algebra of dimension $n+1$ by adjunction of an annihilator element to an $n$-dimensional evolution algebra with a symmetric bilinear form. 
\begin{definitions}\label{pcons}\rm 
Let $\E$ be an evolution $\K$-algebra endowed with a  symmetric bilinear form $\a\colon \E\times\E\to \K$ such that $\a$ diagonalizes with respect to a natural basis of $\E$. Such a symmetric bilinear form will be called {\em compatible symmetric bilinear form}.
We will say that $(\E,\a)$ is a {\em diagonalizable evolution algebra} if it is endowed  with a compatible symmetric bilinear form $\a$. We will drop the bilinear form $\a$ if it is clear from the context.
If $\E$ is a diagonalizable evolution $\K$-algebra we can define a new evolution algebra $\K\times\E$ with product 
\[(\l,x)(\l',x')=(\a(x,x'),xx').\]
\end{definitions}
The fact that $\K\times\E$ is an evolution algebra is easily seen considering a natural basis $\{e_i\}_{i\in I}$ of $\E$ which also diagonalizes $\a$. Then defining $f_0:=(1,0)$ and $f_i=(0,e_i)$ for $i\in I$, we get 
a natural basis $\{f_i\}_{i\in I\cup\{0\}}$ of
$\K\times\E$. The algebra $\K\times\E$ will be called the {\em adjunction of an annihilating element to the diagonalizable evolution algebra $(\E,\a)$}. This algebra will be denoted $\Ad(\E,\a)$.
Two diagonalizable evolution algebras are \emph{isometrically isomorphic} if there exists an isomorphism of evolution algebras which preserve the symmetric bilinear forms.

\begin{remark}\label{tasha}
If B is a diagonalizable evolution algebra then $\dim (\an (\Ad(B,\a)))= 1 + \dim (\an (B))$.
\end{remark}

\begin{remark} \label{desayuno}
 \rm
 Let $(\E_1, \alpha_1$) and $(\E_2,\alpha_2$) be two diagonalizable evolution algebras. If there exists an isometric isomorphism $f\colon\E_1 \to \E_2$, then $\Ad(\E_1,\alpha_1) \cong \Ad(\E_2,\a_2)$.
\end{remark}

\begin{lemma}\label{chiringuito}
Let $\E$ be  an  evolution algebra  with $\dim(\an(\E))=1$. Then  $\E\cong\Ad(B,\a)$, where $B:=\E/\an(\E)$ and $\a$ is a compatible bilinear form in $B$. Furthermore, if  $\asi(\E)=1$ then $\an(B)=0$.
\end{lemma}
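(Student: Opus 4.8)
The plan is to choose a natural basis of $\E$ that makes the one–dimensional annihilator visible, then read off the induced structure on the quotient $B=\E/\an(\E)$ and on the form coming from the multiplication into $\an(\E)$. First I would note that, since $\an(\E)$ is a basic ideal (it is the absorption radical, or more directly one can use the standard fact for evolution algebras), we may pick a natural basis $\{e_0,e_1,\dots,e_{n}\}$ of $\E$ with $\an(\E)=\K e_0$; in particular $e_0^2=0$ and $e_0$ appears in no other $e_i^2$ except possibly with a coefficient, so write $e_i^2=\omega_{0i}e_0+\sum_{j\ge 1}\omega_{ji}e_j$ for $i\ge 1$. Passing to $B=\E/\an(\E)$ with images $\bar e_i=e_i+\K e_0$, the family $\{\bar e_1,\dots,\bar e_n\}$ is a natural basis of $B$, and $\bar e_i^2=\sum_{j\ge 1}\omega_{ji}\bar e_j$. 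Define the symmetric bilinear form $\a$ on $B$ by $\a(\bar e_i,\bar e_i)=\omega_{0i}$ and $\a(\bar e_i,\bar e_j)=0$ for $i\ne j$; this is diagonal in the natural basis $\{\bar e_i\}$, hence a compatible symmetric bilinear form, so $(B,\a)$ is a diagonalizable evolution algebra.

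Next I would exhibit the isomorphism $\E\cong\Ad(B,\a)=\K\times B$. Recall $\Ad(B,\a)$ has natural basis $f_0=(1,0)$, $f_i=(0,\bar e_i)$ with $f_0^2=0$ and $f_i^2=(\a(\bar e_i,\bar e_i),\bar e_i^2)=(\omega_{0i},\bar e_i^2)=\omega_{0i}f_0+\sum_{j\ge 1}\omega_{ji}f_j$. Sending $e_0\mapsto f_0$ and $e_i\mapsto f_i$ for $i\ge 1$ gives a linear bijection that matches all the products $e_i^2\mapsto f_i^2$ and the mixed products $e_ie_j=0\mapsto f_if_j=0$; since an evolution algebra homomorphism is determined by its effect on a natural basis and the only relations are those products, this is an algebra isomorphism. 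Hence $\E\cong\Ad(B,\a)$.

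Finally, for the last assertion, suppose $\asi(\E)=1$, i.e. $\an^{(1)}(\E)=\an^{(2)}(\E)$. By definition $\an^{(2)}(\E)/\an^{(1)}(\E)=\an(\E/\an(\E))=\an(B)$, so $\asi(\E)=1$ forces $\an(B)=0$. This is clean; the only genuine point requiring care is the first one — justifying that the annihilator can be put inside a natural basis (so that the quotient again has a natural basis compatible with the chosen one) and that the resulting $\a$ really is diagonalizable with respect to that basis. For a one–dimensional annihilator in a finite–dimensional evolution algebra this is standard (e.g. via the basic–ideal property of $\an(\E)$, together with Remark~\ref{enfadado} to adjust the basis if needed), so I expect no serious obstacle, only the bookkeeping of checking that the chosen $\a$ is well defined on $B$ independently of the basis representatives and that it diagonalizes.
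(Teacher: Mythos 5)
Your proof is correct and follows essentially the same route as the paper's: both decompose $\E$ over the one-dimensional annihilator, read off the compatible form from the $\an(\E)$-component of the products of natural basis vectors, and match structure constants with $\Ad(B,\a)$. The only point to tighten is your justification that a generator of $\an(\E)$ lies in a natural basis: this follows directly from the elementary fact that $\an(\E)=\sp(\{e_i\colon e_i^2=0\})$ for \emph{any} natural basis $\{e_i\}$ (so when $\dim(\an(\E))=1$ the unique square-zero basis vector spans the annihilator), rather than from the absorption radical, which equals $\an^{(\asi(\E))}(\E)$ and may be strictly larger than $\an(\E)$.
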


\begin{proof}
We can write $\ann(\E)=\K z_0$ for a certain $z_0 \in \E$. We will define a compatible scalar product in $B$,  and then we will prove that $\E\cong\Ad(B,\a)$ for a suitable $\a$.

First, we have $\E=\an(\E)\oplus C$ for some subspace $C$ which can be chosen to have a basis of natural vectors. If $y_1,y_2\in C$
we have $y_1y_2=\theta(y_1,y_2)z_0+q(y_1,y_2)$,
where $\theta\colon C\times C\to \K$ is a symmetric bilinear form and 
$q\colon C\times C\to C$ is symmetric and bilinear. Then, for any two elements $a_1,a_2\in\E$ we have $a_i=k_i z_0+c_i$ (for $i=1,2$) and
\[a_1a_2=(k_1z_0+c_1)(k_2z_0+c_2)=\theta(c_1,c_2)z_0+q(c_1,c_2).\]
Thus, the only thing needed to prove that  $\E=\Ad(C,\theta)$ is that $C$ is an evolution algebra relative to $q$ and that $\theta$ diagonalizes in some natural basis of $C$. In order to do that, take any natural basis $\{e_i\}_{i\in\Lambda}$ of $\E$ and write $e_i=k_i z_0+y_i$ for any $i \in \Lambda$. Then, the set $\{y_i\}_i$ is a system of generators of the vector space $C$, because for any $c\in C$ we have $c=\sum_i h_i e_i$ (for some scalars $h_i\in\K$)
 and hence $c=\sum_i h_i k_i z_0+\sum_i h_i y_i$. Thus $\sum h_ik_i=0$ and
 $c=\sum h_i y_i$. On the other hand the vectors $y_i$'s are pairwise orthogonal: if $i\ne j$ we have $0=e_ie_j=y_iy_j$. Then, there is a basis $\B=\{y_{i_1},\ldots,y_{i_q}\}$ of $C$ which satisfies 
 $y_{i_n}y_{i_m}=0$ for $i_n\ne i_m$. Consequently 
 \[0=y_{i_n}y_{i_m}=\theta(y_{i_n},y_{i_m})z_0+q(y_{i_n},y_{i_m})\Rightarrow\  \theta(y_{i_n},y_{i_m})=0,\ q(y_{i_n},y_{i_m})=0.\] 
This proves that $C$ is an evolution algebra for the product $q$ with natural basis $\B$ and furthermore $\theta$ diagonalizes in $\B$. Also 
\begin{equation}\label{grinch}
\E=\Ad(C,\theta).\end{equation}
On the other hand, the map $f\colon C\to B=\E/\an(\E)$ such that $f(y)=\bar y$ (the class of $y$ modulo $\an(\E)$) is an isomorphism of vector spaces and for any $c_1,c_2\in C$
\[f(c_1c_2)=f(\theta(c_1,c_2)z_0+q(c_1,c_2))=\overline{q(c_1,c_2)}=\overline{c_1}\ \overline{c_2}=f(c_1)f(c_2).\] So $C$ is isomorphic as an evolution algebra to $\E/\an(\E)=B$. If we define in $B$
the unique compatible symmetric bilinear form $\a$  that makes of $f$ an isometric isomorphism then, applying Remark~\ref{desayuno}, we have $\Ad(C,\theta)\cong\Ad(B,\a)$, from which we conclude that 
\begin{equation}\label{grinchdos}
\E\cong\Ad(\E/\an(\E),\a).\end{equation}
For the last assertion in the statement of the Lemma, observe also that $\an(\E/\an(\E))=0$ because $\asi(\E)=1$.

\end{proof}

\begin{example}\label{contraex}
\rm
Consider the evolution algebras $\E_1$ and $\E_2$ with natural bases $\{e_1, e_2, e_3\}$ and $\{f_1, f_2, f_3\}$ and product relative to these bases given by the matrices

\[\left(\begin{matrix}
0 & 0 & 0 \\
0 & 1 & 1 \\
0 & 0 & 0
\end{matrix}\right) \quad \hbox{and} \quad\left(\begin{matrix}
0 & 1 & 0 \\
0 & 1 & 1 \\
0 & 0 & 0
\end{matrix}\right) \text{  respectively}.\]

These evolution algebras satisfy that $\asi(\E_i) =1$, $\dim(\an(\E_i))=1$, and $\E_1/\an(\E_1) \cong \E_2/\an(\E_2)$ but they are not isomorphic.
To prove this statement, note that $\E_1= \an(\E_1) \oplus I$, where $I$ is the ideal generated by $\{e_2, e_3\}$, i.e., it is a reducible evolution algebra, but this is not the case for $\E_2$.
Assume that there exists an evolution ideal $J$ in $\E_2$ such that $\E_2= \an(\E_2) \oplus J$. Denote by $\{\alpha_1 f_1 + \alpha_2 f_2 + \alpha_3 f_3, \beta_1 f_1 + \beta_2 f_2 + \beta_3 f_3\}$ a natural basis of $J$. Then $(\alpha_1 f_1 + \alpha_2 f_2 + \alpha_3 f_3)(\beta_1 f_1 + \beta_2 f_2 + \beta_3 f_3) = 0$ implies $\alpha_2=0, \beta_3=0$ or $\alpha_3=0, \beta_2=0$. Assume the first case (the other one is analogue). Since $J$ is a subalgebra, $(\alpha_1f_1+\alpha_3f_3)^2\in J$, i.e., 
\[\alpha_3^2f_2 = x (\alpha_1f_1+\alpha_3f_3) + y (\beta_1 f_1 +\beta_2 f_2).\] This implies $x\alpha_3=0$ and, since $\alpha_3 \neq 0$ (else $ \alpha_1 f_1 + \alpha_2 f_2 + \alpha_3 f_3 \in \an(\E_2) $), necessarily $x=0$ and consequently $y\beta_1=0$ and $y\beta_2=\alpha_3^2$. We know that $\alpha_3\neq 0$ hence 
 $y \neq 0$ and so $\beta_1=0$. 
Use again that $J$ is a subalgebra to obtain 
$(\beta_2f_2)^2\in J$, i.e. 
$\beta_2^2(f_1+f_2)= x' (\alpha_1f_1+\alpha_3f_3) + y' \beta_2 f_2$, that is $x'\alpha_3=0$, implying again $x'=0$ and, consequently, $\beta_2=0$, a contradiction.
\end{example}

\subsection{Isomorphisms between adjunction algebras of type one.}

In this subsection we study the relation between isomorphism between evolution algebras and isomorphism of their adjunctions. We start by showing that for a diagonalizable evolution algebra a scaling of the associated bilinear form yields isomorphic adjunctions.

Let $B$ be a $\K $-algebra  with $\alpha\colon B\times B\to \K$ and consider a new inner product $\beta\colon B\times B\to \K$ given by $\beta(x,y)=k\alpha(x,y)$ for a fixed nonzero $k\in \K$. 
Then denote $B_\a:= \K\times B$ with the product $(\l,x)(\l',x')=(\a(x,x'),xx')$ and $B_\b:=\K\times B$
with the product $(\l,x)(\l',x')=(\b(x,x'),xx')$. Observe that the map $F\colon B_\a\to B_\b$ such that 
$F(\l,x)=(k\l,x)$ is an isomorphism of $\K$-algebras. Indeed:
\[\begin{array}{lll}
    F((\lambda,x)(\mu,y)) & = & F(\alpha(x,y),xy)=(k\alpha(x,y),xy)=(\beta(x,y),xy) \\
     & = &(k\lambda,x)(k\mu,y)=F(\lambda,x)F(\mu,y).
\end{array} \]

In particular, when $(B,\a)$ is a diagonalizable evolution algebra, $(B,\b)$ is also a diagonalizable evolution algebra and $\Ad(B,\alpha)$ and $\Ad(B,\beta)$ are isomorphic, via the isomorphism $F\colon \Ad(B,\alpha)\to \Ad(B,\beta)$ such that $F(\lambda,x)=(k\lambda,x)$. Hence \[\Ad(B,\alpha)\cong\Ad(B,k\alpha)\] for $k\in \K^\times:=\K\setminus\{0\}$.\medskip

Next we prove that if two diagonalizable evolution algebras are isometrically isomorphic then their adjunctions are isomorphic.

\begin{proposition}\label{jejeje} 
Let $(B_i,\a_i)$  ($i=1,2$) be two diagonalizable evolution algebras and let $\beta\colon B_1\to B_2$ be an algebra isomorphism $\beta\colon B_1\to B_2$. Assume also that there is $\f\in B_1^*:=\hom_k(B_1,\K)$ (the usual dual space) satisfying \[\f(xy)=\a_2(\beta(x),\beta(y))-\a_1(x,y)\] for any $x,y\in B_1$. Then the map $F\colon\Ad(B_1,\a_1)\to \Ad(B_2,\a_2)$ such that 
$F(\lambda,x):=(\lambda+\f(x),\beta(x))$ is an algebra isomorphism.
\end{proposition}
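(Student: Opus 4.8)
The plan is to verify directly that the map $F\colon \Ad(B_1,\a_1)\to \Ad(B_2,\a_2)$ given by $F(\lambda,x)=(\lambda+\f(x),\beta(x))$ is a bijective $\K$-linear algebra homomorphism. Bijectivity and linearity are immediate: $F$ is linear because $\f$ and $\beta$ are, and it is bijective because its inverse is $(\mu,y)\mapsto(\mu-\f(\beta^{-1}(y)),\beta^{-1}(y))$, using that $\beta$ is an isomorphism. So the whole content is the multiplicativity check.

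First I would write out both sides of $F((\lambda,x)(\mu,y))=F(\lambda,x)F(\mu,y)$ explicitly. On the left, using the product in $\Ad(B_1,\a_1)$, we have $(\lambda,x)(\mu,y)=(\a_1(x,y),xy)$, so
\[
F((\lambda,x)(\mu,y))=\bigl(\a_1(x,y)+\f(xy),\ \beta(xy)\bigr).
\]
On the right, using the product in $\Ad(B_2,\a_2)$ and that $\beta$ is an algebra homomorphism so $\beta(x)\beta(y)=\beta(xy)$, we get
\[
F(\lambda,x)F(\mu,y)=\bigl(\lambda+\f(x),\beta(x)\bigr)\bigl(\mu+\f(y),\beta(y)\bigr)=\bigl(\a_2(\beta(x),\beta(y)),\ \beta(xy)\bigr).
\]
The second coordinates already agree. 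The first coordinates agree precisely when $\a_1(x,y)+\f(xy)=\a_2(\beta(x),\beta(y))$, which is exactly the hypothesis $\f(xy)=\a_2(\beta(x),\beta(y))-\a_1(x,y)$. Hence $F$ is multiplicative, and combined with the preceding remarks it is an algebra isomorphism.

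Honestly, there is no real obstacle here: the statement is engineered so that the defining identity for $\f$ is exactly the obstruction to $F$ being multiplicative, and the fact that $\beta$ is an algebra isomorphism disposes of the second-coordinate compatibility. The only point requiring a word of care is to note that the target $(B_2,\a_2)$ really is a diagonalizable evolution algebra (given in the hypothesis), so that $\Ad(B_2,\a_2)$ is defined, and that $\f$ need not respect any multiplicative structure — it is merely a linear functional chosen to absorb the discrepancy between the two bilinear forms transported by $\beta$. I would present the computation in a short display as above and conclude.
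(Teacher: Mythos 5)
Your proof is correct and follows essentially the same route as the paper: a direct verification that the hypothesis on $\f$ is exactly the condition for the first coordinates to match in the multiplicativity check, with linearity and bijectivity noted as immediate. Your explicit formula for the inverse map is a small welcome addition, but the argument is the same.
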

\begin{proof}

It is easy to check the linearity of $F$ and its bijective character. Furthermore:
 \[F((\lambda,x)(\mu,y))=F(\a_1(x,y),xy)=(\a_1(x,y)+\f(xy),\beta(xy))=(\a_2(\beta(x),\beta(y)),\beta(x)\beta(y))=\]
\[(\lambda+\f(x),\beta(x))(\mu+\f(y),\beta(y))=F(\lambda,x)F(\mu,y).\]
\end{proof}

\begin{remark}\rm
 In particular if $\beta\colon B_1\to B_2$ is an isometric isomorphism then, taking $\f=0$, we get that $\Ad(B_1,\a_1)\cong \Ad(B_2,\a_2)$.
\end{remark}

In our next result we describe when an isomorphism between adjunctions imply an isormophism between the algebras. For this, recall that if $B$ be an evolution algebra with zero annihilator, and with a compatible inner product $\a$, then $\hbox{ann}(\Ad(B,\a))=\K\times \{0\}$ (this follows from Remark~\ref{tasha}).

\begin{proposition}\label{dejame}
Assume that $(B_i,\a_i)$ are two diagonalizable evolution algebras and  $B_2$
has zero annihilator. Assume that $F\colon\Ad(B_1,\a_1)\to \Ad(B_2,\a_2)$ is an isomorphism.
Then, scaling the inner product of $B_2$ if necessary, we have that:\par 
\item{1)} There is an isomorphism of algebras $\beta\colon B_1\to B_2$.
\item{2)} There is an element $\f\in B_1^*$ such that $\f(xy)=\a_2(\beta(x),\beta(y))-\a_1(x,y)$ for any $x,y\in B_1$.
\end{proposition}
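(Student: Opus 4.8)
The idea is that the annihilator is an isomorphism invariant, and combining this with Remark~\ref{tasha} will pin down the ``block structure'' of $F$. Since $F$ is an algebra isomorphism it maps $\an(\Ad(B_1,\a_1))$ onto $\an(\Ad(B_2,\a_2))$, so these subspaces have equal dimension. By Remark~\ref{tasha} the former has dimension $1+\dim(\an(B_1))$, while the latter, because $B_2$ has zero annihilator, equals $\K\times\{0\}$, of dimension $1$. Hence $\dim(\an(B_1))=0$, so $\an(\Ad(B_1,\a_1))=\K\times\{0\}$ as well, and $F(\K\times\{0\})=\K\times\{0\}$. Writing $F(\lambda,x)=(g(\lambda,x),h(\lambda,x))$ with $g\colon\K\times B_1\to\K$ and $h\colon\K\times B_1\to B_2$ linear, the inclusion $F(\K\times\{0\})\subseteq\K\times\{0\}$ forces $h(\lambda,0)=0$ for all $\lambda$, so by linearity $h(\lambda,x)=h(0,x)=:\beta(x)$ depends only on $x$; likewise $g(\lambda,x)=c\lambda+\psi(x)$ where $c:=g(1,0)$ and $\psi\in B_1^*$. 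Note $c\ne0$, for otherwise $F(1,0)=(0,0)$ contradicts injectivity of $F$. Thus $F(\lambda,x)=(c\lambda+\psi(x),\beta(x))$.

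Next I would verify that $\beta\colon B_1\to B_2$ is a linear isomorphism: surjectivity follows from that of $F$, and if $\beta(x)=0$ then $F(0,x)=(\psi(x),0)\in\K\times\{0\}=\an(\Ad(B_2,\a_2))$, so $(0,x)\in\an(\Ad(B_1,\a_1))=\K\times\{0\}$, whence $x=0$. Comparing the $B_2$-components of the identity $F((\lambda,x)(\mu,y))=F(\lambda,x)F(\mu,y)$, i.e. of $F(\a_1(x,y),xy)=(c\lambda+\psi(x),\beta(x))(c\mu+\psi(y),\beta(y))$, yields $\beta(xy)=\beta(x)\beta(y)$, so $\beta$ is an algebra isomorphism; this is~(1). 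Comparing the $\K$-components of the same identity gives $c\,\a_1(x,y)+\psi(xy)=\a_2(\beta(x),\beta(y))$, that is, $\psi(xy)=\a_2(\beta(x),\beta(y))-c\,\a_1(x,y)$.

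Finally, to obtain the precise form in~(2) I would absorb the scalar $c$ into the inner product of $B_2$: replace $\a_2$ by $\tfrac1c\a_2$, which keeps $(B_2,\tfrac1c\a_2)$ a diagonalizable evolution algebra and, by the isomorphism $\Ad(B_2,\a_2)\cong\Ad(B_2,\tfrac1c\a_2)$ recorded before Proposition~\ref{jejeje}, changes nothing up to isomorphism. Dividing the last displayed identity by $c$ then gives $\f(xy)=\a_2(\beta(x),\beta(y))-\a_1(x,y)$ with $\f:=\tfrac1c\psi\in B_1^*$, proving~(2); equivalently, after composing $F$ with $(\mu,y)\mapsto(\tfrac1c\mu,y)$ the isomorphism takes the normal form $(\lambda,x)\mapsto(\lambda+\f(x),\beta(x))$ of Proposition~\ref{jejeje}. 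The bulk of this is routine linear algebra and the one point deserving care is precisely this last step: the constant $c$ cannot be eliminated by a rescaling on the $B_1$-side alone, since $(tc-1)\a_1(x,y)$ fails to be of the form $\f_0(xy)$ unless $tc=1$, which is why the statement permits scaling the inner product of $B_2$.
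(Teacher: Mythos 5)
Your proof is correct and follows essentially the same route as the paper's: use that $F$ preserves annihilators to see $F(\K\times\{0\})=\K\times\{0\}$, write $F(\lambda,x)=(c\lambda+\psi(x),\beta(x))$, and compare components of $F((\lambda,x)(\mu,y))=F(\lambda,x)F(\mu,y)$. If anything you are more explicit than the paper about the normalization $c=1$ (the paper compresses this into ``up to scalar multiples, $F((1,0))=(1,0)$''), correctly absorbing $c$ via the isomorphism $\Ad(B_2,\a_2)\cong\Ad(B_2,\tfrac1c\a_2)$; only your closing aside about the impossibility of rescaling on the $B_1$-side is questionable (setting $\tilde\a_1=c\a_1$ would also produce the displayed identity), but it plays no role in the argument.
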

\begin{proof}
Let $F\colon \K\times B_1\to \K\times B_2$ be as in the statement of the proposition. Then, up to scalar multiples, we have that $F((1,0))=(1,0)$ (because $(1,0)\in\hbox{ann}(Ad(B_1,\a_1))$ implies $F((1,0))\in\hbox{ann}(Ad(B_2,\a_2))=\K\times 0$). So we have $F((\lambda,0))=(\lambda,0)$ for any scalar $\lambda$.
Now, $F((0,x))=(\f(x),\beta(x))$ for some linear maps $\f\colon B_1\to \K$ and $\beta\colon B_1\to B_2$.  We prove that $\beta$ is
an monomorphism:  if $\beta(x)=0$ then $F((0,x))=(\f(x),0)=F((\f(x),0))$ and, since $F$ is an isomorphism, $(0,x)=(\f(x),0)$ implies $x=0$.
Also $\beta$ is epimorphism, since for any $y\in B_2$ we have $(0,y)=F((\lambda,x))$ for some $\lambda\in \K$ and $x\in B_1$. So $y=\beta(x)$.
Next we check that $\beta(xy)=\beta(x)\beta(y)$ for any $x,y$, and simultaneously we check condition 2) in the proposition.
For this, notice that
\[F((0,x)(0,y))= F((\a_1(x,y),xy))=(a_1(x,y)+\f(xy),\beta(xy)), \text{ while }\] 
\[F((0,x))F((0,y))=(\f(x),\beta(x))(\f(y),\beta(y))=(\a_2(\beta(a),\beta(y)),\beta(x)\beta(y)).\]
So we get $\beta(xy)=\beta(x)\beta(y)$ and $\f(xy)=\a_2(\beta(x),\beta(y))-\a_1(x,y)$ for any $x,y\in B_1$.
\end{proof}

\begin{corolary}\label{hambre} Let $B_i$, $i=1,2$ be diagonalizable perfect evolution algebras. Then we have  $\Ad(B_1,\a_1)\cong\Ad(B_2,\a_2)$ if and only if $B_1\cong B_2$.
\end{corolary}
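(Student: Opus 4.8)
The plan is to prove the two implications separately; the forward implication is immediate, while the converse is where the real work lies, and it will rest entirely on a rigidity property of perfect evolution algebras together with Proposition~\ref{jejeje}.

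For $\Ad(B_1,\a_1)\cong\Ad(B_2,\a_2)\Rightarrow B_1\cong B_2$, I would first note that a perfect evolution algebra is non-degenerate: for a natural basis $\{e_i\}$ one has $B_i^2=\sp(\{e_i^2\})$, so $B_i=B_i^2$ forces $\{e_i^2\}$ to be a basis, the structure matrix relative to $\{e_i\}$ to be invertible, and in particular $e_i^2\ne 0$ for all $i$; hence $\an(B_i)=0$. Then, as observed just before Proposition~\ref{dejame}, $\an(\Ad(B_i,\a_i))=\K\times\{0\}$, and the quotient $\Ad(B_i,\a_i)/\an(\Ad(B_i,\a_i))$ is isomorphic as an algebra to $B_i$. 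Since the annihilator ideal, and therefore this quotient algebra, is an isomorphism invariant, $\Ad(B_1,\a_1)\cong\Ad(B_2,\a_2)$ yields $B_1\cong B_2$ (alternatively, this is exactly part~$1$ of Proposition~\ref{dejame}).

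For the converse, fix an algebra isomorphism $\beta\colon B_1\to B_2$ and a natural basis $\{e_i\}_{i=1}^n$ of $B_1$ diagonalizing $\a_1$. The crucial point is that a perfect evolution algebra has an essentially unique natural basis: any two natural bases differ by a permutation and nonzero rescalings. Indeed, if $\{g_k\}$ is another natural basis and $g_k=\sum_i P_{ik}e_i$, then expanding $0=g_kg_l=\sum_i P_{ik}P_{il}\,e_i^2$ for $k\ne l$ and using that $\{e_i^2\}$ is a basis (perfectness) gives $P_{ik}P_{il}=0$ for all $i$ and all $k\ne l$; hence each row of the invertible matrix $P$ has a single nonzero entry, which forces $P$ to be monomial. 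Applying this in $B_2$, the natural basis $\{\beta(e_i)\}$ of $B_2$ is a permutation of rescalings of some $\a_2$-diagonalizing natural basis, hence itself diagonalizes $\a_2$; in particular $\a_2(\beta(e_i),\beta(e_j))=0$ whenever $i\ne j$.

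Finally, since $B_1$ is perfect, $\{e_i^2\}_{i=1}^n$ is a basis of $B_1$, so one may define $\f\in B_1^*$ by $\f(e_i^2):=\a_2(\beta(e_i),\beta(e_i))-\a_1(e_i,e_i)$. Both sides of the identity $\f(xy)=\a_2(\beta(x),\beta(y))-\a_1(x,y)$ are symmetric $\K$-bilinear in $(x,y)$, so it suffices to check it on pairs $(e_i,e_j)$: for $i=j$ it holds by the definition of $\f$, and for $i\ne j$ the left-hand side is $\f(0)=0$ while the right-hand side is $\a_2(\beta(e_i),\beta(e_j))-\a_1(e_i,e_j)=0-0=0$ by the previous paragraph and the choice of $\{e_i\}$. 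Proposition~\ref{jejeje} then provides the isomorphism $\Ad(B_1,\a_1)\cong\Ad(B_2,\a_2)$, namely $(\l,x)\mapsto(\l+\f(x),\beta(x))$. The only step that is not pure bookkeeping is the rigidity of natural bases in the perfect case, and I expect that to be the main obstacle, although the short computation above shows it follows quickly from invertibility of the structure matrix.
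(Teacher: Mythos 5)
Your proof is correct and follows essentially the same route as the paper: derive $B_1\cong B_2$ from Proposition~\ref{dejame} (using that perfectness forces zero annihilator), and for the converse build $\f\in B_1^*$ on the basis $\{e_i^2\}$ and invoke Proposition~\ref{jejeje}; your definition of $\f$ via $\f(e_i^2):=\a_2(\beta(e_i),\beta(e_i))-\a_1(e_i,e_i)$ is exactly the paper's formula written with the inverse structure matrix $(\tilde\w_i^j)$. The only difference is that you explicitly prove the rigidity of natural bases in the perfect case (monomial change-of-basis matrix), a fact the paper asserts with the word ``necessarily''; your short argument for it is correct.
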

\begin{proof}
Since the perfection of a finite dimensional evolution algebra implies that its annihilator is zero, Proposition \ref{dejame} gives that 
$\Ad(B_1,\a_1)\cong\Ad(B_2,\a_2)$ implies $B_1\cong B_2$.
Reciprocally, assume $\beta\colon B_1\rightarrow B_2$ is an isomorphism. 
Let $\{u_i\}$ be a natural basis of $B_1$ (necessarily it diagonalizes $\a_1$) and notice that the (natural) basis $\{\b(u_i)\}$ of $B_2$ diagonalizes $\a_2$.
Let $(\w_i^j)$ be the structure matrix of $B_1$ (so $u_i^2=\sum_j\w_i^ju_j$ for any $i$). Let $(\tilde\w_i^j)$ be the inverse matrix of $(\w_i^j)$, that is, $\sum_j\w_i^j\tilde\w_j^k=\delta_i^k$ (Kronecker delta) for any $i,k$.
Define $\f\colon B_1\to \K$ by writing $\f(u_k):=\sum_i \tilde\w_k^i(\a_2(\b(u_i),\b(u_i))-\a_1(u_i,u_i))$ for any $k$.
Then it is easy to check that $\f(u_iu_j)=\a_2(\beta(u_i),\beta(u_j))-\a_1(u_i,u_j)$ for any $i,j$, whence $\f(xy)=\a_2(\b(x),\b(y))-\a_1(x,y)$ for any $x,y\in B_1$, and the result follows from Proposition~\ref{jejeje}.
\end{proof}

\begin{remark}\label{pesado} \rm

A moduli set for algebras of type $\Ad(B,\a)$ is the following. Fix an evolution algebra $B$ and consider the group $\G=\aut(B)\times B^*$ (where $B^*$ is the dual space of $B$) endowed with the product \[(\eta,S)(\theta,T)=(\eta\theta,S\theta+T),\]
where $\eta,\theta\in\aut(B)$ and $S,T\in B^*$.
Define $\mathcal U$ to be the $\K$-space of all compatible symmetric bilinear forms $\a\colon B\times B\to\K$. There is an action $\G\times{\mathcal U}\to {\mathcal U}$ given by 
\begin{equation}\label{crepus}
    (\theta,T)\a:=\a',\hbox{ where } \a'(\theta(x),\theta(y))-\a(x,y)=T(xy),
\end{equation}
for any $x,y\in B$.
Then $(\G,\mathcal{U})$ is a moduli set, which will be used to classify algebras of type  $\Ad(B,\a)$ (see Proposition~\ref{dejame}).
\end{remark}

\section{Classification in terms of the upper annihilating series}\label{annclas}
In this section we will study degenerate three-dimensional evolution algebras $\E$ in terms of their upper annihilating series. 
Before we state the classification theorem we prove the following lemma.

\begin{lemma}\label{starving}
Let $A$ be an evolution algebra with natural basis $\B=\{e_i\}$, $\dim(\ann{(A)})=1$ and product $e_1^2=0$, $e_2^2=\a e_1$, $e_3^2= \b e_1+ \gamma e_2 + \delta e_3$ with $\gamma \neq 0$ or $\delta \neq 0$. Then there exists another natural basis $\{f_i\}$ such that $f_1^2=0$, $f_2^2=f_1$ and $f_3^2 \in \sp(\{f_2,f_3\})$.
\end{lemma}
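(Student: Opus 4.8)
The plan is to exhibit the new natural basis $\{f_1,f_2,f_3\}$ explicitly after a short preliminary reduction, splitting into two cases according to whether $\gamma$ vanishes.

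First I would observe that $e_1\in\ann(A)$, since $e_1$ annihilates every basis vector. Moreover the hypothesis $\dim(\ann(A))=1$ forces $\alpha\neq 0$: were $\alpha=0$, then $e_2^2=0$ would put $e_2\in\ann(A)$ as well, so $\ann(A)\supseteq\K e_1\oplus\K e_2$, a contradiction. Hence $\ann(A)=\K e_1$. In particular, any vector $f_1$ sitting in a natural basis with $f_1^2=0$ necessarily lies in $\K e_1$. The one ingredient giving room to manoeuvre is that $e_1$ multiplies every element of $A$ to zero, so adding scalar multiples of $e_1$ to $e_2$ or to $e_3$ preserves naturality of the basis and does not change the squares $e_2^2,e_3^2$.

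Now I would treat the two cases. If $\gamma\neq 0$, I propose $f_1:=\gamma^2\alpha\, e_1$, $f_2:=\beta e_1+\gamma e_2$, $f_3:=e_3$. One checks at once that $f_if_j=0$ for $i\neq j$ and that $\{f_1,f_2,f_3\}$ is a basis (using $\alpha,\gamma\neq 0$), and then $f_2^2=\gamma^2\alpha\, e_1=f_1$ while $f_3^2=e_3^2=(\beta e_1+\gamma e_2)+\delta e_3=f_2+\delta f_3\in\sp(\{f_2,f_3\})$. If $\gamma=0$, then by hypothesis $\delta\neq 0$, and I propose $f_1:=\alpha e_1$, $f_2:=e_2$, $f_3:=e_3+(\beta/\delta)e_1$; again naturality and the basis property are immediate, $f_2^2=\alpha e_1=f_1$, and $f_3^2=e_3^2=\beta e_1+\delta e_3=\delta f_3\in\sp(\{f_2,f_3\})$.

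The only genuine subtlety is the case split: there is no single uniform substitution, because pushing the $e_1$-coefficient of $e_3^2$ inside $\sp(\{f_2,f_3\})$ requires either a true $e_2$-component in $f_2$ (available only when $\gamma\neq 0$) or a nonzero $e_3$-coefficient in $f_3^2$ together with an $e_1$-correction to $e_3$ (available when $\delta\neq 0$); one cannot first manufacture a nonzero $\gamma$ by replacing $e_3$ with $e_3+s e_2$, since that destroys the orthogonality $e_2e_3=0$. Everything else in the argument is a routine verification.
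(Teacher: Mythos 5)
Your proof is correct and follows essentially the same route as the paper: exploit that $e_1\in\ann(A)$ (and that $\dim\ann(A)=1$ forces $\alpha\neq0$) so that rescaling $e_1$ and adding multiples of $e_1$ to $e_2,e_3$ preserves naturality. The only cosmetic difference is that your two cases are absorbed in the paper into the single substitution $f_2=xe_1+e_2$, $f_3=x'e_1+e_3$ with $(x,x')$ solving the one linear equation $\gamma x+\delta x'=\beta$, which is solvable precisely because $(\gamma,\delta)\neq(0,0)$.
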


\begin{proof}
First, scaling $e_1$ we may consider without lost of generality that $\a=1$. Now, we take the natural basis $\{f_i\}$ with $f_1=e_1$, $f_2=xe_1+e_2$ and $f_3=x'e_1+e_3$. Note that we can choose $x$ and $x'$ such that $f_3^2 \in \sp(\{f_2,f_3\})$.
\end{proof}

We have the following classification:

\begin{theorem} \label{tornillo}
Let $\E$ be a three-dimensional evolution algebra over a field $\K$ and assume that $\an(\E)\ne \{ 0\}$. Then we have that one, and only one, of the following possibilities holds: 
\begin{enumerate}
    \item If $\asi(\E)=3$ then $\E$ is a nilpotent evolution algebra.
    
\item If $\asi(\E)=2$, and $\an(\E)$ has dimension $1$, then $\E$ is isomorphic to the evolution algebra with structure matrix
$\tiny\begin{pmatrix}
0 & 1 & 0\cr 0 & 0 & 0\cr 0 & 0 & 1 \end{pmatrix}$, or with structure matrix $\tiny\begin{pmatrix}
0 & 1 & 0\cr 0 & 0 & 1\cr 0 & 0 & 1\end{pmatrix}$, or with structure matrix $\tiny\begin{pmatrix}
0 & 1 & \b \cr 0 & 0 & 0\cr 0 & 0 & 0\end{pmatrix}$ with $\b\neq 0$ (notice that the first two algebras are non-isomorphic and also non-isomorphic to any algebra of the third type). 
Furthermore, the algebras with the previous structure matrix (depending on $\b$), are classified by the moduli set \eqref{radiola}.

\item If $\asi(\E)=2$, and $\an(\E)$ has dimension $2$, then $\E$ is isomorphic to the nilpotent evolution algebra with structure matrix
$\tiny\begin{pmatrix}
0 & 0 & 1\cr 0 & 0 & 0\cr 0 & 0 & 0 \end{pmatrix}$.

\item If $\asi(\E)=1$, and $\an(\E)$ has dimension $2$, then $\E$ is isomorphic to the evolution algebra with structure matrix $\tiny\begin{pmatrix}
0 & 0 & 0\cr 0 & 0 & 0\cr 0 & 0 & 1 \end{pmatrix}$.
\item If $\asi(\E)=\dim(\an(\E))=1$ then $\E\cong \Ad(B,\a)$, where $B$ is a two-dimensional evolution algebra with zero annihilator provided with a compatible symmetric bilinear form $\a$. The classification of evolution algebras in this class is given by the moduli set  \eqref{crepus}.
\end{enumerate}
\end{theorem}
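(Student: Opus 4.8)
The plan is to split according to the pair $(\asi(\E),\dim(\an(\E)))$ and show that this pair takes only the five listed combinations, treating each combination by picking a convenient natural basis. Since $\E$ is $3$-dimensional with $\an(\E)\neq 0$, we have $\dim(\an(\E))\in\{1,2,3\}$; the case $\dim(\an(\E))=3$ is $\E^2=0$, which we regard as nilpotent (covered by (1) with $\asi(\E)=1$), so the genuine dichotomy is $\dim(\an(\E))\in\{1,2\}$. For $\dim(\an(\E))=2$, write $\an(\E)=\sp(\{e_1,e_2\})$ with $\{e_i\}$ a natural basis; then $e_3^2\in\E$ and, after subtracting a suitable combination of $e_1,e_2$ from $e_3$ and rescaling, the three subcases $e_3^2=0$, $e_3^2\in\an(\E)\setminus\{0\}$, and $e_3^2\notin\an(\E)$ occur. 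The first gives $\asi(\E)=1$, matching (4); the second gives $\asi(\E)=2$ with the nilpotent matrix in (3); the third forces $e_3^2=\lambda e_3 + (\text{something in }\an(\E))$, and after a change of basis $f_3 = $ (scale) $e_3$ plus correction in $\an(\E)$ we may assume $e_3^2=e_3$, so $\E=\an(\E)\oplus\K e_3$ is the algebra of (4) again — hence this subcase actually does not produce a new algebra and I will note the support/Remark~\ref{enfadado} argument that rules it out as a separate case. (I should double-check: the statement lists only one algebra for $\dim(\an(\E))=2,\asi=1$, so the ``$e_3^2\notin\an$'' subcase must collapse into it; alternatively, one shows directly that if $\dim(\an(\E))=2$ and $\E$ is not nilpotent then $\E$ has an idempotent-generated complement and is the algebra of (4).)

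For $\dim(\an(\E))=1$, the crucial input is Lemma~\ref{chiringuito}: $\E\cong\Ad(B,\a)$ with $B=\E/\an(\E)$ a $2$-dimensional evolution algebra and $\a$ a compatible bilinear form, and moreover $\asi(\E)=1$ iff $\an(B)=0$. This immediately gives case (5): when $\asi(\E)=1$ we get $\an(B)=0$, and the classification of such $\E$ by the moduli set \eqref{crepus} is exactly Remark~\ref{pesado} combined with Proposition~\ref{dejame} (which says isomorphism of adjunctions, after scaling the form, corresponds to an isomorphism $B_1\cong B_2$ together with the coboundary condition on $\f$, i.e. to equality of orbits under $\G=\aut(B)\times B^*$). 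When instead $\asi(\E)=2$ and $\dim(\an(\E))=1$, then $\an(B)=\an(\E/\an(\E))\neq 0$, so $\an(\E)\subsetneq\an^{(2)}(\E)$ and $\an^{(2)}(\E)/\an(\E)=\an(B)$ has dimension $1$ (it cannot be $2$, else $B$ would have $2$-dimensional annihilator and $B^2=0$, forcing $\an(\E)\supseteq \E^2$ big — I would check this gives $\asi(\E)=2$ but $\dim\an(\E)=2$ instead, a contradiction). With $\dim(\an^{(2)}(\E))=2$ and $\asi(\E)=2$ we choose a natural basis adapted to the flag $\an(\E)\subset\an^{(2)}(\E)\subset\E$: here Lemma~\ref{starving} does the work, producing a natural basis with $f_1^2=0$, $f_2^2=f_1$ (or $f_2^2=0$), $f_3^2\in\sp(\{f_2,f_3\})$, and then a short case analysis on $f_3^2=\gamma f_2+\delta f_3$ yields precisely the three structure matrices listed in (2); the $\b\neq 0$ family corresponds to $f_3^2=0$, $f_1^2=0$, $e_1^2$-coefficient analysis giving $e_1^2=0$, $e_2^2 = e_1+\b e_3$-type normal form, and scaling the free parameter gives the moduli set \eqref{radiola}. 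The non-isomorphism claims among the three matrices I would verify by computing invariants: $\dim(\E^2)$, $\dim(\E^3)$, and whether $\E^2\cap\an(\E)=0$ distinguish the idempotent-type ones from the $\b$-family, while the first two differ in the dimension of $\E\E^2$ or in the rank of the structure matrix restricted to $\an^{(2)}$.

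For the mutual exclusivity (``one and only one''), note that $\asi(\E)$ and $\dim(\an(\E))$ are isomorphism invariants, so it suffices to observe that the five cases exhaust the possible pairs: $\asi(\E)=3$ can only occur with the chain $\an\subsetneq\an^{(2)}\subsetneq\an^{(3)}=\E$, forcing $\dim(\an^{(i)})=i$ and (by Proposition~\ref{musiquita}\eqref{hig4}) $\E$ nilpotent — this is (1), and here $\dim(\an(\E))=1$ is automatic; $\asi(\E)=2$ splits into $\dim(\an(\E))=1$ (case (2)) and $\dim(\an(\E))=2$ (case (3), where necessarily $\an^{(2)}(\E)=\E$ forcing $\E$ nilpotent and $\dim(\E^2)=1$, giving the single matrix); $\asi(\E)=1$ splits into $\dim(\an(\E))=2$ (case (4)) and $\dim(\an(\E))=1$ (case (5)); and $\dim(\an(\E))=3$ falls under (1).

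The main obstacle I expect is the bookkeeping in case (2): showing that Lemma~\ref{starving}'s normal form, together with the constraints $\asi(\E)=2$ and $\dim(\an(\E))=1$, leaves exactly the three displayed matrices and no more — in particular correctly identifying which normalizations are achievable by further change of natural basis (scaling $e_1$, adding multiples of $e_1$ to $e_2,e_3$, permuting) versus which residual parameter $\b$ is a genuine modulus requiring the moduli set \eqref{radiola} — and separately verifying the three are pairwise non-isomorphic. The $\dim(\an(\E))=1$ cases are comparatively clean because Lemma~\ref{chiringuito}, Proposition~\ref{dejame}, and Remark~\ref{pesado} package the work; the $\dim(\an(\E))=2$ cases are elementary basis chases.
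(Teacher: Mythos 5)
Your overall strategy --- a case split on the pair $(\asi(\E),\dim(\an(\E)))$, with Lemma~\ref{chiringuito}, Proposition~\ref{dejame} and Remark~\ref{pesado} packaging case (5), and elementary basis chases for the rest --- is the same as the paper's. But there is a genuine error in your treatment of case (2). You assert that when $\asi(\E)=2$ and $\dim(\an(\E))=1$ one must have $\dim(\an^{(2)}(\E))=2$, ruling out the possibility that $B:=\E/\an(\E)$ has two-dimensional annihilator (i.e.\ $B^2=0$) on the grounds that $\E^2\subseteq\an(\E)$ would force $\dim(\an(\E))=2$. That implication is false: the algebra with $e_1^2=0$, $e_2^2=e_1$, $e_3^2=\b e_1$ ($\b\ne 0$) has $\an(\E)=\K e_1$ one-dimensional, $\E^2=\K e_1\subseteq\an(\E)$, hence $B^2=0$ and $\an^{(2)}(\E)=\E$ of dimension $3$. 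This is exactly the third family $\tiny\begin{pmatrix}0 & 1 & \b\cr 0 & 0 & 0\cr 0 & 0 & 0\end{pmatrix}$ in the statement, and your dichotomy excludes it. The paper instead splits case (2) on $\dim(\an^{(2)}(\E))\in\{2,3\}$: the value $2$ yields the two idempotent-type matrices via Lemma~\ref{starving}, and the value $3$ yields the $\b$-family together with the moduli set \eqref{radiola}. Your later attempt to recover the $\b$-family from Lemma~\ref{starving} cannot succeed, because that lemma assumes $\gamma\ne 0$ or $\delta\ne 0$ in $e_3^2=\b e_1+\gamma e_2+\delta e_3$, which fails precisely for this family; moreover the normal form you describe ($e_2^2=e_1+\b e_3$) is not the matrix in the statement.

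A smaller slip: in the $\dim(\an(\E))=2$ analysis, the subcase $e_3^2=0$ cannot occur (it would force $\dim(\an(\E))=3$), and it is the subcase $e_3^2\notin\an(\E)$ --- normalized to $e_3^2=e_3$ after correcting $e_3$ by an element of the annihilator --- that produces case (4); you assign these the other way around before your parenthetical self-correction, which does land on the paper's actual argument. The remaining parts of your plan (exclusivity via the isomorphism invariance of $\asi(\E)$ and $\dim(\an(\E))$, the nilpotent cases, and case (5)) agree with the paper.
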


\begin{proof}

Notice that the maximum value of $\asi(\E)$ is $3$. In this case $\E$ is nilpotent.
Since the classification of low dimensional nilpotent evolution algebras has been achieved up to dimension $5$ (see \cite{ElduqueLabra}), we will not pursue further this class of algebras. So we will focus on the cases $\asi(\E)\le 2$. Let us first consider $\asi(\E)=2$ and $\dim(\an(\E))=1$. 
Suppose that $\dim(\an^{(2)}(\E))=2$.
Take a generator $e_1$ of $\an(\E)$. We have 
$\an^{(2)}(\E)=\K e_1\oplus\K e_2$ for a suitable element $e_2\in\E$. We know that $e_2\E\subset\K e_1$.
At this point we know that $e_2^2=\l e_1$.
Choose an $a\in\E$ which is linearly independent with $e_1$ and $e_2$. So $\{e_1,e_2,a\}$ is a basis of $\E$. If $e_2 a=0$ then we have a natural basis $\{e_1,e_2,a\}$. On the contrary
$e_2 a=k e_1$ for a suitable $k\in\K^\times$.
Define $e_3:=x e_1+y e_2+z a$ (so that a wise choice of $x,y,z\in\K$ will give a natural basis 
$\{e_1,e_2,e_3\}$ of $\E$). To pick $x,y,z$, notice that we need 
\[0=e_2e_3=e_2(x e_1+y e_2+z a)=y\l e_1 +z k e_1,\]
and hence $y\l+z k=0$, so that $z=-y\l/k$, which proves that such a natural basis $\{e_1,e_2,e_3\}$ exists. We have $e_1^2=0$, $e_2^2=\l e_1$. As $\dim(\an^{(2)}(\E))=2$ then $e_3^2=\a e_1+\b e_2 + \delta e_3$ with $\delta \neq 0$. By Lemma~$\eqref{starving}$ we have the desired basis.

If $\a=0$ then $\{e_1,e_2,\b^{-1}e_3\}$ is a natural basis with structure matrix \[\tiny\begin{pmatrix}
0 & 1 & 0\cr 0 & 0 & 0\cr 0 & 0 & 1\end{pmatrix}.\]
If $\a\ne 0$ then $\{\a^2\b^{-4}e_1,\a\b^{-2}e_2,\b^{-1}e_3\}$ is a natural basis with structure matrix 
\[\tiny\begin{pmatrix}
0 & 1 & 0\cr 0 & 0 & 1\cr 0 & 0 & 1\end{pmatrix}.\]

Notice that the two algebras above are not isomorphic, since any isomophism between these two algebra preserves the annihilators and the set of nonzero idempotents.

Now suppose that $\dim(\an^{(2)}(\E))=3$. It is easy to check the structure matrix of this algebra is \[\tiny\begin{pmatrix}
0 & \a & \b \cr 0 & 0 & 0 \cr 0 & 0 & 0\end{pmatrix},\] where $\a$ and $\b$ are nonzero. After a change of basis we can take $\a=1$. Moreover, if we have two evolution algebras with structure matrices $\tiny\begin{pmatrix}
0 & 1 & \b \cr 0 & 0 & 0 \cr 0 & 0 & 0\end{pmatrix}$ and $\tiny\begin{pmatrix}
0 & 1 & \b' \cr 0 & 0 & 0 \cr 0 & 0 & 0\end{pmatrix}$ respectively, an easy though tedious computation reveals that both algebras are isomorphic if and only if $\b'=k^2\b$ for some $k\in\K^\times$. 
Thus a moduli set for this class of algebras is the one in \eqref{radiola}.
Clearly this algebra is not isomorphic to the two algebras with $\dim(\an^{(2)}(\E))=2$  considered above. 

Next we turn to the case  $\asi(\E)=2$ and $\dim(\an(\E))=2$. Since $\an(\E)=\langle e_1, e_2 \rangle$ there are two zero columns in the structure matrix of $\E$. Since  $\asi(\E)=2$, we have that $\E$ has a natural basis $\{e_1,e_2,e_3\}$ with  structure matrix
\[\tiny\begin{pmatrix}
0 & 0 & \alpha\cr 0 & 0 & \beta \cr 0 & 0 & 0\end{pmatrix},\] where $\a$ or $\b$ is nonzero. Notice that this algebra also has a natural basis given by $\{\a e_1+ \b e_2, e_2, e_3\}$, which has structure matrix
\[\tiny\begin{pmatrix}
0 & 0 & 1\cr 0 & 0 & 0 \cr 0 & 0 & 0\end{pmatrix}.\]

Let us now analyze the case $\asi(\E)=1$, in this case we have \[\an(\E)=\an\nolimits^{(2)}(\E)=\cdots . \] Then  $\dim(\an(\E))\in\{0,1,2\}$. By hypothesis $\dim(\an(\E))\neq 0$. The easiest case left is
the one in which $\dim(\an(\E))=2$, because we have a basis of the annihilator $\{e_1,e_2\}$ which can be completed to a natural basis $\{e_1,e_2,w\}$ of $\E$:
define $e_3:=xe_1+ye_2+zw$ where $z\ne 0$.
We have $w^2=a e_1+b e_2+c w$ for some 
$c\in\K^\times$. Observe that if $c=0$, then 
$w\E\subset\an(\E)$ so that $w\in\an^{(2)}(\E)=\K e_1+\K e_2$ a contradiction.
Then we compute $x,y,z$ so that $e_3^2$ gets simplified:
\[e_3^2=z^2(a e_1+b e_2+c w)=z^2 a e_1+z^2b e_2+z c(e_3-xe_1-ye_2)=\]
\[z(za-cx)e_1+z(zb-cy)e_2+z c e_3.\]
Since $c\ne 0$, we may take $z=c^{-1}$,
$x=zac^{-1}$ and $y=zbc^{-1}$ which imply $e_3^2=e_3$. Thus we have a natural basis 
$\{e_1,e_2,e_3\}$ with structure matrix 
$\tiny\begin{pmatrix}
0 & 0 & 0\cr 0 & 0 & 0\cr 0 & 0 & 1 \end{pmatrix}$.


Finally, the case $\asi(\E)=1$ with $\dim(\an(\E))=1$ follows from Lemma~\ref{chiringuito}. Observe also that $\an(\E/\an(\E))=0$ because $\asi(\E)=1$. A moduli set for this class of algebras is given in Remark \ref{pesado}.

\end{proof}

\section{Socle chain}\label{socchain}
In this section we will determine the socle in non-degenerate finite evolution algebras. We establish a new type of adjunction algebras and we analyse the conditions for these algebras to be isomorphic. These results will be useful for the classification in terms of the socle of non-degenerate three-dimensional evolution algebras. 

We start studying minimal ideals of evolution algebras.  These ideals are not necessarily simple algebras when considered as algebras on its own. Even if we assume that a minimal ideal has nonzero product, it is not a simple algebra as the following example shows.
\begin{example}\label{superejemplo}
\rm
Consider the three-dimensional evolution algebra with natural basis $\B=\{e_1, e_2, e_3\}$ and product $e_1^2=e_2+e_3$, $e_2^2=-e_3^2=e_1+e_2+e_3$. Let $I$ be the ideal generated by $e_1$, so $I=\sp(e_1,e_2+e_3)$. Note that $I^2 \neq 0$ and $I$ is not simple because $J=\sp(e_2+e_3)$ is a proper ideal of $I$. Moreover, $I$ is a minimal ideal of the evolution algebra.
\end{example}

Since minimal ideals will play a roll in our study, we next delimit the ground in which minimal ideals live. 
Furthermore, The next result gives the key for the effective computation of the socle of an evolution algebra.

\begin{proposition}\label{paleta}
Let $\E$ be  evolution algebra with natural basis $\B=\{e_i\}$. Let $I\triangleleft\E$ be minimal and $i\in\supp_\B(I)$ be such that $e_i^2\neq 0$. Then $I$ is generated (as an ideal) by $e_i^2$.
\end{proposition}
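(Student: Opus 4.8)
The plan is to show that the ideal $I$ is generated by $e_i^2$ by first producing $e_i^2$ inside $I$, and then invoking minimality. First I would observe that since $i\in\supp_\B(I)$, there is some element $z\in I$ with $i\in\supp_\B(z)$, say $z=\sum_{j}\lambda_j e_j$ with $\lambda_i\neq 0$. Multiplying $z$ by $e_i$ and using that $\B$ is a natural basis gives $z e_i=\lambda_i e_i^2$. Since $e_i^2\neq 0$ and $\lambda_i\neq 0$, we get $0\neq e_i^2\in I$ (because $I$ is an ideal and $z\in I$). Hence the ideal $(e_i^2)$ generated by $e_i^2$ is a nonzero subideal of $I$.

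Next I would use minimality of $I$: since $0\neq (e_i^2)\subseteq I$ and $I$ is a minimal ideal, we conclude $(e_i^2)=I$, which is exactly the claim.

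The only point that requires a small amount of care — and which I expect to be the main (mild) obstacle — is the very first step: extracting an element of $I$ whose support contains $i$. By the definition of $\supp_\B(I)=\bigcup_{x\in I}\supp_\B(x)$, the condition $i\in\supp_\B(I)$ directly yields such an $x$, so there is no real difficulty here; one just needs to be sure the multiplication $x e_i$ isolates the $e_i^2$ term, which is immediate from $e_je_i=0$ for $j\neq i$ in a natural basis. I would also note explicitly that $e_i^2\in I$ already by itself (not just $(e_i^2)\subseteq I$), since $I$ is an ideal and $x\in I$ forces $xe_i\in I$; combined with $xe_i=\lambda_ie_i^2$ and $\lambda_i\in\K^\times$ this gives $e_i^2\in I$. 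The argument does not use the hypothesis that $\E$ is finite-dimensional, nor non-degeneracy, so the statement holds in the generality stated.
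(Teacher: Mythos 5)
Your proposal is correct and follows essentially the same route as the paper: extract $z\in I$ with $\lambda_i\neq 0$ from $i\in\supp_\B(I)$, compute $ze_i=\lambda_i e_i^2$ to get $e_i^2\in I$, and conclude by minimality. No gaps.
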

\begin{proof}
Let $i\in\supp_\B(I)$ as in the hypothesis. Then there exists a $x\in I$ such that $x=\lambda_i e_i+\theta$, where $\lambda_i\in \K^\times$ and $\theta$ is in the linear span  of the remaining basis elements. Multiplying $x$ by $e_i$ we obtain that  
$x e_i=\lambda_i e_i^2\in I$ and hence $e_i^2\in I$. Now, by the minimality of $I$, we have that $I=\esc{e_i^2}$.
\end{proof}

In the case of an associative ring $U$ (not necessarily unital), a (left) $U$-module $M$ is said to be irreducible (or simple according to other authors) when $UM\ne 0$ and the only submodules of $M$ are $0$ and $M$ itself (see for instance \cite[Definition 1, p. 4]{jac}). The (left) socle of   an $U$-module $M$ is defined as the sum of all irreducible submodules of $M$ (see \cite[Definition 1, p. 63]{jac}). Then, the socle of a ring $U$ is defined as the socle of the (left) $U$-module $U$. Consequently the socle of $U$ is the sum of all minimal left ideals $I$ such that $UI\ne 0$. For instance, if $U=\K$ is a field then its socle is $\K$ itself. However, if we consider the ring $U=\K$ (a field) endowed with the zero product, then its socle is $0$. The socle of a module over an associative ring is proved in \cite[Corollary 2, p. 61]{jac} to be a direct sum of irreducible $U$-submodules.

 If $A$ is any $\K$-algebra, we define the left multiplication algebra, $\mathfrak{M}$, as the subalgebra of 
$\text{End}_\K(A)$ generated by the left multiplication operators $L_a$ ($a\in A$). We also define the left multiplication algebra with unit, $\mathfrak{M_1}$, as the subalgebra of 
$\text{End}_\K(A)$ generated by the left multiplication operators $L_a$ ($a\in A$) and the identity map $A\to A$. 
Then $A$ is a left $\mathfrak{M}$-module, 
and the simple left $\mathfrak{M}$-modules of $A$ are those minimal left ideals $I$ of $A$ such that $AI\ne \{0\}$. So 
the (left) socle of $A$, denoted $\soc(A)$, can be defined as the socle of the $\mathfrak{M}$-module $A$. Applying the classical socle theory we have $\soc(A)=\oplus_\a J_\a$, where $\{J_\a\}$ is a certain collection of minimal left ideals of $A$, each one satisfying $AJ_\a\ne \{0\}$. If $A$ turns out to be commutative then $\soc(A)$ is a direct sum of minimal ideals of $A$ (not annihilated by $A$).
In the finite-dimensional case it is clear that the socle is always nonzero.  

We define the chain of socles of an evolution algebra as usual:
\[\soc(A)\subset\soc\nolimits^{(2)}(A)\subset\cdots\subset\soc\nolimits^{(n)}(A)\subset\soc\nolimits^{(n+1)}(A)\subset\cdots\]
where $\soc(A/\soc^{(n)}(A))=\soc^{(n+1)}(A)/\soc^{(n)}(A)$. This implies that each $\soc^{(n)}(A)$ is an ideal.

\begin{definition} \rm \label{ssi} In the previous setting, suppose that there is an $n\in \N^*$ such that  $\soc^{(n)}(A)=\soc^{(n+k)}(A)$ for each $k>0$. We then define the {\em socle stabilizing index}, denoted $\ssi(A)$, as the least natural $n$ such that $\soc^{(n)}(A)=\soc^{(n+k)}(A)$ for any $k>0$. 
\end{definition}

Proposition \ref{paleta} gives a procedure for the effective computation of the socle of a finite-dimensional non-degenerate evolution algebra, as we illustrate in the following example.

\begin{example}\label{minimales}
\rm
 Consider, for instance, the $4$-dimensional algebra $\E$ whose structure matrix relative to the natural basis $\{e_i\}_{i=1}^4$ is 

\[\left(\begin{matrix}
1 & 1 & 0 & 0 \\
0 & 1 & 0 & 0 \\
0 & 0 & -1 & 1 \\
0 & 0 & -1 & 1
\end{matrix}\right).\]

We apply Proposition \ref{paleta} to compute the socle of $A$.
Then $\esc{e_1^2}=\bK e_1$ and hence this ideal is minimal. Since 
$\esc{e_2^2}=\sp(\{e_1,e_2\})$ this ideal is not minimal. Finally 
$\esc{e_3^2}=\esc{e_4^2}=\sp(\{e_3+e_4\})$, and this ideal is also minimal. So we have only two minimal ideals $\esc{e_1^2}$ and
$\esc{e_3^2}$. Both ideals are not annihilated by $\E$, hence $\soc(\E)=\esc{e_1^2}\oplus\esc{e_3^2}$.
\end{example}

Recall that if $A$ is an evolution algebra which decomposes as a direct sum of (possibly infinitely many) ideals
$A=\oplus_{\a\in\Lambda}J_\a$, then each ideal $J_\a$ is an evolution algebra (see \cite[Lemma 5.2.]{CSV1}).

\subsection{Adjunction of  type two.} \label{taza}
Let $B$ be an evolution algebra over $\K$. In this subsection our goal is to construct a commutative algebra $A$ containing $B$ as a minimal ideal. Furthermore, we want $A/B$ to be a one-dimensional algebra.

As a vector space $A=B\times\K$ and the multiplication of $A$ must 
be $(b,\l)(b',\l'):=(bb'+\a(b,\l')+\a(b',\l)+\Phi(\l,\l'),\l \l'k_0)$,
where $k_0 \in \K$; $\a\colon B\times\K\to B$ and $\Phi\colon\K\times\K\to B$ are bilinear maps. Then there is a linear map $\varphi\colon B\to B$ such that $\varphi(b)=\alpha(b,1)$ for each $b \in B$ (i.e. $\lambda\varphi(b)=\alpha(b,\l)$ with $\l \in \K$). Since we want our construction to give a zero annihilator algebra, there must exist an element $0\ne b_0\in B$ such that $\Phi(\l,\l')=\l\l'b_0$. Thus, we can rewrite the product in $A$ in the form:
\begin{equation}\label{loque}
(b,\l)(b',\l'):=(bb'+\l'\varphi(b)+\l\varphi(b')+\l\l'b_0,\l \l'k_0),
\end{equation}
with $\varphi\colon B\to B$ linear and $b_0\in B\setminus\{0\}$. Then $B\times\{0\}$ is an ideal of $A$  isomorphic to $B$ so we will identify them. 
Under these circumstances we have a short exact sequence
\[B\buildrel{i}\over{\hookrightarrow}  A\buildrel{\pi}\over\twoheadrightarrow \K,\]
where $\K$ is endowed with the product $\l\ast \l'=k_0\l\l'$, $i$ is the canonical injection $i(b):=(b,0)$, and $\pi$ is the canonical projection.
\begin{definition}\label{adjdos} \rm
Let $B$ be an evolution algebra over $\K$ and $\varphi\colon B\to B$ a linear map. Consider moreover $b_0\in B$ and $k_0\in \K$. We define  $\Au{(B,\varphi,b_0,k_0)}$ as the algebra $B \times \K$ defined above with the product \eqref{loque}. Occasionally, to simplify, we write  $\Au{(B,\varphi,b_0)}$ instead of  $\Au{(B,\varphi,b_0,0)}$.
\end{definition}

Since we want $B\times\{0\}$ to be a minimal ideal of $A=\Au{(B,\varphi,b_0,k_0)}$, we must require the additional condition that 
the unique ideals of $B$ which are $\varphi$-invariant are $0$ and $B$ itself. Indeed we have

\begin{lemma}\label{naranja} Assume that $A=\Au(B,\varphi,b_0,k_0)$ has zero annihilator. 
Then  $B\times\{0\}$ is a minimal ideal of $A$ if and only if the unique ideals of $B$ which are $\varphi$-invariant are $0$ and $B$ itself. Furthermore, if we assume that $B\times\{0\}$ is minimal, then $\soc(A)=B\times \{0\}$ or $A=\soc(A)$.
\end{lemma}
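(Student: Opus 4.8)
The plan is to prove the two directions of the equivalence separately, and then deduce the statement about the socle from the minimality of $B\times\{0\}$ together with the one-dimensionality of the quotient $A/(B\times\{0\})$.

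First I would show that if $B\times\{0\}$ is a minimal ideal of $A$, then every $\varphi$-invariant ideal of $B$ is $0$ or $B$. Suppose $J\triangleleft B$ is $\varphi$-invariant, i.e.\ $\varphi(J)\subseteq J$ and $JB\subseteq J$. I claim $J\times\{0\}$ is an ideal of $A$: indeed, using the product \eqref{loque}, for $j\in J$ and $(b',\l')\in A$ we get $(j,0)(b',\l')=(jb'+\l'\varphi(j),0)$, and $jb'\in JB\subseteq J$ while $\varphi(j)\in J$, so the product lands in $J\times\{0\}$. Hence $J\times\{0\}$ is an ideal of $A$ contained in the minimal ideal $B\times\{0\}$, so $J\times\{0\}$ is $0$ or $B\times\{0\}$, that is, $J=0$ or $J=B$.

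Next I would prove the converse: assume the only $\varphi$-invariant ideals of $B$ are $0$ and $B$, and that $A$ has zero annihilator; I must show $B\times\{0\}$ is a minimal ideal of $A$. First, $B\times\{0\}$ is an ideal of $A$ by the computation above (with $J=B$), and it is nonzero since $b_0\neq 0$ forces $B\neq 0$. Let $0\neq I\triangleleft A$ with $I\subseteq B\times\{0\}$; identify $I$ with an ideal of $B$ via $B\cong B\times\{0\}$. Because $I$ is an ideal of $A$, closure under multiplication by elements $(0,\l)$ gives, for $j\in I$, that $(j,0)(0,1)=(\varphi(j),0)\in I$, so $I$ is $\varphi$-invariant. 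By hypothesis $I=0$ or $I=B$, and since $I\neq 0$ we get $I=B\times\{0\}$. Thus $B\times\{0\}$ is minimal.

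Finally, assume $B\times\{0\}$ is minimal and show $\soc(A)=B\times\{0\}$ or $A=\soc(A)$. The quotient $A/(B\times\{0\})$ is one-dimensional (spanned by the class of $(0,1)$), so any ideal of $A$ is either contained in $B\times\{0\}$ or, together with $B\times\{0\}$, spans all of $A$; in the latter case such an ideal equals $A$ (if it also contains $B\times\{0\}$) or is a complement line. Now $\soc(A)$ is a sum of minimal ideals of $A$ not annihilated by $A$. One minimal ideal is $B\times\{0\}$, which contributes exactly when $A(B\times\{0\})\neq 0$; by Proposition~\ref{paleta}-type reasoning, or directly from \eqref{loque}, $A(B\times\{0\})=0$ would force, combined with $B$ being inside the zero-annihilator algebra $A$, that actually $A(B\times\{0\})\neq 0$, so $B\times\{0\}\subseteq\soc(A)$. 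If $\soc(A)$ contains no other minimal ideal, then $\soc(A)=B\times\{0\}$. Otherwise there is a minimal ideal $M\not\subseteq B\times\{0\}$; then $M+(B\times\{0\})=A$, and since $\soc(A)$ contains both $M$ and $B\times\{0\}$ we get $\soc(A)=A$. The main obstacle I anticipate is the careful bookkeeping in this last paragraph — in particular checking that $A(B\times\{0\})\neq 0$ using the zero-annihilator hypothesis and the explicit product, and handling the possibility that $B\times\{0\}$ itself could fail to be a direct summand in the socle decomposition while still proving the stated dichotomy; everything else is a direct computation with \eqref{loque}.
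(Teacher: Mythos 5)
Your proof is correct and follows essentially the same route as the paper's: both directions of the equivalence are handled by observing that ideals of $A$ contained in $B\times\{0\}$ correspond exactly to $\varphi$-invariant ideals of $B$ (via multiplication by $(0,1)$ for the $\varphi$-invariance), and the socle dichotomy is obtained by noting that any minimal ideal distinct from $B\times\{0\}$ must intersect it trivially and hence, by the codimension-one count, sum with it to all of $A$. Your extra remark that $A(B\times\{0\})\ne 0$ (so that $B\times\{0\}$ genuinely contributes to the socle) is a point the paper leaves implicit, and it is correctly settled by the zero-annihilator hypothesis.
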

\begin{proof}
 Assume first that
$B\times\{0\}$ is a minimal ideal of $A$ and take any ideal $0\ne I\triangleleft B$ satisfying $\varphi(I)\subset I$.
Then $I\times\{0\}$ is a nonzero ideal of $A$ contained in $B\times\{0\}$ and  hence $I=B$.
Conversely assume that $B$ has no $\varphi$-invariant ideals other than $0$ and $B$. Take an ideal $J$ of $A$ contained in $B$. Then $J=I\times\{0\}$, where $I\triangleleft B$  and $\varphi(I)\subset I$. Hence $I=0$ or $I=B$ implying that $J=0$ or $J=B\times\{0\}$. Let us prove the second assertion. Assume that $B\times\{0\}$ is a minimal ideal of $A$ and $J$ is any other minimal ideal of $A$. If $J\cap (B\times\{0\})=0$, then $\soc(A)=J\oplus (B\times\{0\})=A$. So if $A$ does not coincide with its socle then we have $0\ne J\cap (B\times\{0\})\subset B\times\{0\}$. By minimality of $B\times\{0\}$ we have 
$B\times\{0\}\subset J$ and so, by minimality of $J$, we have $B\times\{0\}=J$. Consequently $\soc(A)=B\times\{0\}$.
\end{proof}

\subsection{Isomorphisms between adjunction algebras of type two.}

We next explicit the isomorphism conditions among non-semisimple algebras $\Au(B,\varphi,b_0,k_0)$ with zero annihilator and such that 
$\soc(\Au(B,\varphi,b_0,k_0))=B\times\{0\}$. In order to do this, we will consider a $\K$-algebra homomorphism
 $\theta\colon \Au{(B,\varphi,b_0,k_0)}\cong \Au{(B',\varphi',b_0',k_0')}$ mapping $B\times\{0\}$ to $B'\times\{0\}$ (a condition which is automatic if $\theta$ is an isomorphism, because  $\theta$ would map the socle of the first algebra into the socle of the second one, take also into account Lemma \ref{naranja}).
 Firstly, for any $b\in B$ and $\l\in\K$, we can write $\theta(b,\l)=(\theta_1(b)+\theta_2(\l),\theta_3(\l))$, where $\theta_1\colon B\to B'$, 
$\theta_2\colon \K  \to B'$, and $\theta_3\colon \K\to\K$ are linear maps.  It is easy to check that $\theta_1$ is a homomorphism of $\K$-algebras and satisfy
\begin{equation}
\begin{cases}\label{paren}
\theta_1\varphi = L_{\theta_2(1)}\theta_1+\theta_3(1)\varphi'\theta_1,\cr 
k_0\theta_3(1)=k_0'\theta_3(1)^2,\cr
\theta_1(b_0)+k_0\theta_2(1)=\theta_2(1)^2+2\theta_3(1)\varphi'(\theta_2(1))+\theta_3(1)^2b_0'.\end{cases}
\end{equation}
If $\theta$ turns out to be an isomorphism, then $\theta_1:B\to B'$ is an isomorphism. Hence we will consider the isomorphism problem in the mentioned class of algebras $\Au(B,\varphi,b_0,k_0)$, but with $B$ fixed.
As a consequence of equations \eqref{paren}, the algebras $\Au{(B,\varphi,b_0,0)}$ and $\Au{(B,\varphi',b_0',0)}$
are isomorphic if, and only if, $\theta_1\varphi = L_{\theta_2(1)}\theta_1+\theta_3(1)\varphi'\theta_1$ and $\theta_1(b_0)=\theta_2(1)^2+2\theta_3(1)\varphi'(\theta_2(1))+\theta_3(1)^2b_0'$.
No algebra $\Au{(B,\varphi,b_0,0)}$ is isomorphic to $\Au{(B,\varphi',b_0',k_0')}$ with $k_0'\ne 0$. Furthermore, two algebras $\Au{(B,\varphi,b_0,k_0)}$ and $\Au{(B,\varphi',b_0',k_0')}$ are isomorphic if and only if
\begin{equation}\label{parenu}
\begin{cases}
\theta_1\varphi\theta_1^{-1} = L_{\theta_2(1)}+\theta_3(1)\varphi',\cr 
k_0=k_0'\theta_3(1),\cr
\theta_1(b_0)+k_0\theta_2(1)=\theta_2(1)^2+2\theta_3(1)\varphi'(\theta_2(1))+\theta_3(1)^2b_0'.\end{cases}
\end{equation}
As a corollary, any algebra $\Au{(B,\varphi,b_0,k_0)}$, with $k_0\ne 0$, is isomorphic to a suitable algebra $\Au{(B,\varphi',b_0',1)}$. Summarizing: the isomorphism conditions among algebras of type $\Au(B,\varphi,b_0,k_0)$ is reduced to the following two questions:
\begin{enumerate}
    \item When is there an isomorphism $\Au{(B,\varphi,b_0,0)}\cong \Au{(B,\varphi',b_0',0)}$ ?
\item When is there an isomorphism $\Au{(B,\varphi,b_0,1)}\cong \Au{(B,\varphi',b_0',1)}$ ?
\end{enumerate}
Concerning the second question, by $\eqref{parenu}$ we would have $\theta_3(1)=1$. If we take the particular solution $\theta_2=0$, then we get $\theta_1\varphi\theta_1^{-1}=\varphi'$ and $\theta_1(b_0)=b_0'$. Thus 
\[\forall\theta\in\aut(B), \hbox{ we have }\Au(B,\varphi,b_0,1)\cong\Au(B,\theta\varphi\theta^{-1},\theta(b_0),1).\]

In order to address the general answer to the above isomorphism questions, notice that (\ref{parenu}) yields the following result:

\begin{proposition}\label{pollo}
Let $B$ be a $\K$-algebra, and $\varphi,\varphi'\colon B\to B$ be linear maps with no non-trivial invariant ideals. Assume that $\Au(B,\varphi,b_0,0)$ is not semisimple and has zero annihilator. Then 
$\Au(B,\varphi,b_0,0)\cong \Au(B,\varphi',b_0',0)$ if and only if there is an automorphism 
$\theta\in\aut(B)$, and elements
$k\in\K^\times$, $b_1\in B$, such that:
\begin{equation}
\begin{cases}
\varphi'=k^{-1}(\theta\varphi\theta^{-1}-L_{b_1}),\cr 
b_0'=k^{-2}\left(\theta(b_0)-b_1^2-2k\varphi'(b_1)\right).
\end{cases}
\end{equation}
In particular, we have $\Au(B,\varphi,b_0,0)\cong \Au(B,\theta\varphi\theta^{-1},\theta(b_0),0)$ for any $\theta\in\aut(B)$.
\end{proposition}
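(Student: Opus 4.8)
The plan is to reduce the proposition to the system \eqref{paren}, which already records exactly when a linear map of the form $\theta(b,\l)=(\theta_1(b)+\theta_2(\l),\theta_3(\l))$ sending $B\times\{0\}$ onto $B'\times\{0\}$ is an algebra homomorphism between two adjunction algebras of type two; the statement will then be a rearrangement of \eqref{parenu} specialized to $k_0=k_0'=0$. The one point needing a real argument is why, for an \emph{isomorphism}, the condition $\theta(B\times\{0\})=B\times\{0\}$ used in deriving \eqref{paren} holds automatically.

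For the forward implication I would start from an isomorphism $\theta\colon\Au(B,\varphi,b_0,0)\to\Au(B,\varphi',b_0',0)$. Since $\varphi$ and $\varphi'$ have no non-trivial invariant ideals and both algebras have zero annihilator, Lemma~\ref{naranja} applies to each, and since neither algebra is semisimple its dichotomy forces $\soc(\Au(B,\varphi,b_0,0))=B\times\{0\}$ and the same for the target. Because $\theta$ and $\theta^{-1}$ preserve the socle, $\theta(B\times\{0\})=B\times\{0\}$, so $\theta$ has the desired shape with $\theta_1\in\aut(B)$ (the restriction of $\theta$ to the ideal $B\times\{0\}$, which it maps bijectively onto $B\times\{0\}$) and $\theta_3\colon\K\to\K$ the induced isomorphism on the quotients, so $\theta_3(1)\in\K^\times$. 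Then \eqref{paren} holds, its middle equation being vacuous as $k_0=k_0'=0$; putting $\theta:=\theta_1$, $k:=\theta_3(1)$, $b_1:=\theta_2(1)$ and solving the first and third equations for $\varphi'$ and $b_0'$ yields exactly the two displayed identities.

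For the converse, given $\theta\in\aut(B)$, $k\in\K^\times$, $b_1\in B$ with the two identities, I would define $F(b,\l):=(\theta(b)+\l b_1,\,k\l)$ and check that it is an isomorphism $\Au(B,\varphi,b_0,0)\to\Au(B,\varphi',b_0',0)$: it is bijective with inverse $(b',\l')\mapsto(\theta^{-1}(b'-k^{-1}\l' b_1),\,k^{-1}\l')$, and substituting into \eqref{loque} shows it is multiplicative precisely because the two identities are equivalent to \eqref{paren} with $\theta_1=\theta$, $\theta_2(1)=b_1$, $\theta_3(1)=k$, $k_0=k_0'=0$. Finally the ``in particular'' part is the case $k=1$, $b_1=0$, where $b_1^2=0$ and $\varphi'(b_1)=0$ so the identities reduce to $\varphi'=\theta\varphi\theta^{-1}$ and $b_0'=\theta(b_0)$. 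I expect the only slightly delicate step to be the socle argument in the forward direction (and being careful that the target algebra inherits the ``not semisimple'' and zero-annihilator hypotheses from $\Au(B,\varphi,b_0,0)$, since it is isomorphic to it); everything else is routine bookkeeping already set up before the statement.
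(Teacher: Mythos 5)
Your proposal is correct and follows essentially the same route as the paper: the paper also derives the proposition directly from the system \eqref{paren}/\eqref{parenu} specialized to $k_0=k_0'=0$, having justified $\theta(B\times\{0\})=B\times\{0\}$ beforehand by the socle-preservation argument via Lemma~\ref{naranja}. Your explicit verification of the converse with $F(b,\l)=(\theta(b)+\l b_1,k\l)$ is a correct fleshing-out of what the paper leaves implicit.
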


To get a classifying moduli set for the algebras of type $\Au(B,\varphi,b_0,0)$, let $\G:=\aut(B)\times B\times\K^\times$ be the group with product defined by
\[(\theta,b_1,k)(\theta',b_1',k'):=(\theta\theta', \theta(b_1')+k' b_1,k k').\]
Consider the set $\endo_{\K}{(B)}\times B$ and the action $\G\times [\endo_{\K}{(B)}\times B]\to [\endo_\K{(B)}\times B]$
given by $(\theta,b,k)\cdot(\varphi,b_0)=(\varphi',b_0')$, where 
\[\begin{cases}
\varphi'=k^{-1}(\theta\varphi\theta^{-1}-L_{b}),\cr 
b_0'=k^{-2}\left(\theta(b_0)-b^2-2k\varphi'(b)\right).
\end{cases}\]

Then Proposition~\ref{pollo} can be re-stated, in terms of the above action, by claiming that the isomorphism classes of algebras $\Au(B,\varphi,b_0,0)$ are in one-to-one correspondence with the elements of the set $(\endo_\K(B)\times B)/\G$ (or in other words, the algebras of type $\Au(B,\varphi,b_0,0)$
are classified by the moduli set $(\G,\endo_\K(B)\times B)$).

We now focus on the isomorphism question for algebras of type $\Au(B,\varphi,b_0,1)$. As before, we use equations in (\ref{parenu}) to obtain the following result.

\begin{proposition}\label{pollo1}
Let $B$ be a $\K$-algebra. Then
$\Au(B,\varphi,b_0,1)\cong \Au(B,\varphi',b_0',1)$ if and only if there is an automorphism 
$\theta\in\aut(B)$, and an element $b_1\in B$, such that: 
\[\begin{cases}\varphi'=\theta\varphi\theta^{-1}-L_{b_1},\cr 
b_0'=\theta_1(b_0)+b_1+b_1^2-2\theta\varphi\theta^{-1}(b_1).
\end{cases}\]
\end{proposition}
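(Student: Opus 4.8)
The statement is the specialization of the general isomorphism criterion \eqref{parenu} to the case $k_0 = k_0' = 1$, so the plan is to simply unwind \eqref{parenu} in that setting and massage the three resulting conditions into the stated form. First I would recall that, as observed just before the statement, any isomorphism $\theta\colon \Au(B,\varphi,b_0,1)\to \Au(B,\varphi',b_0',1)$ must send $B\times\{0\}$ (the socle) to $B'\times\{0\}$, hence decomposes as $\theta(b,\lambda) = (\theta_1(b) + \theta_2(\lambda), \theta_3(\lambda))$ with $\theta_1\in\aut(B)$ and $\theta_2\colon\K\to B$, $\theta_3\colon\K\to\K$ linear. Plugging $k_0 = k_0' = 1$ into the second equation of \eqref{parenu} gives $1 = \theta_3(1)$, so $\theta_3 = \id$. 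Writing $b_1 := \theta_2(1)$, the first equation of \eqref{parenu} becomes $\theta_1\varphi\theta_1^{-1} = L_{b_1} + \varphi'$, i.e.\ $\varphi' = \theta_1\varphi\theta_1^{-1} - L_{b_1}$, which is the first line of the claimed system (with $\theta = \theta_1$).

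For the second line I would substitute $k_0 = 1$, $\theta_3(1) = 1$ and $\theta_2(1) = b_1$ into the third equation of \eqref{parenu}, obtaining
\[
\theta_1(b_0) + b_1 = b_1^2 + 2\varphi'(b_1) + b_0'.
\]
Solving for $b_0'$ yields $b_0' = \theta_1(b_0) + b_1 - b_1^2 - 2\varphi'(b_1)$. To match the form in the statement I would then replace $\varphi'$ using the first equation, $\varphi'(b_1) = \theta_1\varphi\theta_1^{-1}(b_1) - L_{b_1}(b_1) = \theta_1\varphi\theta_1^{-1}(b_1) - b_1^2$, so that
\[
b_0' = \theta_1(b_0) + b_1 - b_1^2 - 2\theta_1\varphi\theta_1^{-1}(b_1) + 2b_1^2 = \theta_1(b_0) + b_1 + b_1^2 - 2\theta_1\varphi\theta_1^{-1}(b_1),
\]
which is exactly the second line claimed (writing $\theta$ for $\theta_1$; note the ``$\theta_1(b_0)$'' in the statement is a harmless typo for $\theta(b_0)$).

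This establishes the forward direction. For the converse, given $\theta\in\aut(B)$ and $b_1\in B$ satisfying the two displayed equations, I would define $F\colon \Au(B,\varphi,b_0,1)\to\Au(B,\varphi',b_0',1)$ by $F(b,\lambda) := (\theta(b) + \lambda b_1, \lambda)$ and verify directly, using the product \eqref{loque}, that $F$ is an algebra homomorphism; bijectivity is clear since $\theta$ is bijective and the $\K$-component is the identity. The verification that $F$ respects products is the only computation of substance, but it is entirely routine: one expands both $F((b,\lambda)(b',\lambda'))$ and $F(b,\lambda)F(b',\lambda')$ via \eqref{loque} and checks that the two defining equations are precisely what is needed for the $B$-components to agree (the $\K$-component matches trivially because $k_0 = k_0' = 1$). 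The only mild subtlety — and the step I would be most careful about — is bookkeeping the relation $\varphi' = \theta\varphi\theta^{-1} - L_{b_1}$ consistently when it appears inside the $b_0$-equation, since it is easy to drop a factor of $L_{b_1}$; but there is no real obstacle here, as everything follows mechanically from \eqref{parenu}.
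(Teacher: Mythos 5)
Your proposal is correct and follows essentially the same route as the paper, which obtains Proposition \ref{pollo1} by specializing the isomorphism equations \eqref{parenu} to $k_0=k_0'=1$ (forcing $\theta_3(1)=1$, setting $b_1=\theta_2(1)$, and eliminating $\varphi'(b_1)$ via the first equation), exactly as you do; your algebra for the second line checks out, and you are right that the $\theta_1(b_0)$ in the statement is a typo for $\theta(b_0)$. Your explicit converse map $F(b,\lambda)=(\theta(b)+\lambda b_1,\lambda)$ is just the instance of the general $\theta$ with $\theta_3(1)=1$ and $\theta_2(1)=b_1$ already encoded in \eqref{parenu}, so nothing genuinely different is happening.
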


To get a classifying moduli set for the algebras of type $\Au(B,\varphi,b_0,1)$, let $\G:=\aut(B)\times B$ be the group with product defined by
\[(\theta,b_1)(\theta',b_1'):=(\theta\theta', \theta(b_1')+b_1).\]
Consider the set $\endo_{\K}{(B)}\times B$ and the action $\G\times [\endo_{\K}{(B)}\times B]\to [\endo_\K{(B)}\times B]$
given by $(\theta,b)\cdot(\varphi,b_0)=(\varphi',b_0')$, where 
\begin{equation}\label{poy}
\begin{cases}\varphi'=\theta\varphi\theta^{-1}-L_{b},\cr 
b_0'=\theta_1(b_0)+b+b^2-2\theta\varphi\theta^{-1}(b).
\end{cases}
\end{equation}
Then Proposition~\ref{pollo1} can be re-stated in terms of the above action by claiming that 
the isomorphism classes of algebras $\Au(B,\varphi,b_0,1)$ (for a fixed $B$) are in one-to-one correspondence with the elements of the set $(\endo_\K(B)\times B)/\G$.

\begin{example}\rm 
Let $B$ be the evolution $\R$-algebra with natural basis $\B=\{e_1,e_2\}$ and multiplication $e_1^2=e_2$ and $e_2^2=e_1$. Then $\G:=\aut(B)=\{1,\theta\}\cong\Z_2$, where $\theta$ swaps $e_1$ and $e_2$. We want to classify the algebras $\Au(B,\varphi,b_0,1)$. Observe that for any $b\in B$ we have $b=\a e_1+\b e_2$ with $\a,\b\in\R$, so that the matrix of $L_b$ relative to $\B$, by columns, is $\tiny\begin{pmatrix}0 & \b\cr\a & 0\end{pmatrix}$. On the other hand, for any $(\varphi,b_0)$ there is some $b$ such that the matrix of $\varphi'$ relative to $\B$ is diagonal (see equations in \eqref{poy}). So the action of the group allow us to find a $(\varphi',b_0')$ in the orbit of each $(\varphi,b_0)$, with $\varphi'$ diagonal relative to the basis $\B$. Consequently we focus in the problem of classifying the pairs $(x,y)$ and $(z,t)$, where $\varphi$ is represented by the diagonal matrix $\hbox{\rm diag}(x,y)$ relative to $\B$, and $b_0$ has the coordinates $(z,t)\ne (0,0)$ relative to the same basis. In these terms, the action is equivalent to

\[\G\times [\R^2\times(\R^2\setminus\{0\})]\to \R^2\times(\R^2\setminus\{0\}),\]
where the element $\theta\in\aut(B)$ acts in the form $\theta\cdot (x,y,z,t)=(y,x,t,z)$. It can be proved that $[\R^2\times(\R^2\setminus\{0\})]/\G$ is in one-to-one correspondence with the subset of $\R^4$ given by 
\[\{(x,y,z,t)\in\R^4\colon z<t\}\sqcup\{(x,y,z,z)\in \R^4\colon x\le y,z\ne 0\}.\]

\end{example}

Now, in order to establish a relation between $\ann(B)$  and $\Au{(B,\varphi,b_0)}$ we prove the following proposition.

\begin{proposition}\label{brownie} Let $A=\Au{(B,\varphi,b_0)}$. We have
\begin{enumerate}
    \item \label{tarta}$\Ker(\varphi)=\{b\in B\colon (b,0)(0,1)=0\}=\an_{B\times \{0\}}(\{0\}\times\K)$.
    \item\label{galleta} $\an(A)\cap (B\times \{0\})=(\an(B)\cap\Ker(\varphi))\times \{0\}$.
    \item \label{vainilla} If $(\an(B)\cap\Ker(\varphi))\times \{0\}=\{0\}$, then $\dim(\an(A))\in\{0,1\}$.
    \item \label{sirope} If there exists $b\in B$ such that $b^2=b_0$ and $\varphi=-L_b$, then $(b,1)\in \ann(A)$.
    \item \label{bizcocho}If  $(\an(B)\cap\Ker(\varphi))\times \{0\}=\{0\}$ and $\an(A)\ne \{0\}$, then there is a $b\in B$ such that $b^2=b_0$ and $\varphi=-L_b$. In this case 
    $\an(A)=\K(b,1)$.
    \item \label{caramelo} If $(\an(B)\cap\Ker(\varphi))\times \{0\}\ne 0$, $\an(A)\ne \{0\}$, there is a $b\in B$ such that $b^2=b_0$, and $\varphi=-L_b$, then   
    $\an(A)=\K(b,1)\oplus\left(\an(B)\cap\Ker(\varphi)\times\{0\}\right)$.
    \item \label{mermelada} If $(\an(B)\cap\Ker(\varphi))\times \{0\}\ne 0$ and $\an(A)\ne \{0\}$, but there is no $b\in B$ such that $b^2=b_0$ and $\varphi=-L_b$, then   
    $\an(A)=\an(B)\cap\Ker(\varphi)\times\{0\}$. 
\end{enumerate}
\end{proposition}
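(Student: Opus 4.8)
The plan is to compute $\an(A)$ directly from the multiplication rule \eqref{loque} with $k_0=0$, writing a general element of $A$ as $(c,\mu)$ with $c\in B$ and $\mu\in\K$, and imposing that it annihilates all of $A$. First I would test against the elements of the form $(b,0)$, $b\in B$, and against $(0,1)$; by bilinearity these two families generate $A$, so annihilating both is equivalent to lying in $\an(A)$. Computing $(c,\mu)(b,0)=(cb+\mu\varphi(b),0)$ and $(c,\mu)(0,1)=(\varphi(c)+\mu b_0,0)$, the condition $(c,\mu)\in\an(A)$ becomes the system: $cb+\mu\varphi(b)=0$ for all $b\in B$, together with $\varphi(c)+\mu b_0=0$. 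This is the computational backbone that all seven items rest on, so I would isolate it as the opening paragraph of the proof (several of the earlier items, e.g. \eqref{galleta}, \eqref{sirope}, \eqref{bizcocho}, already record special cases of exactly this computation, and I would cite them rather than redo the work).

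Next I would split on the value of $\mu$. If $\mu=0$, the system says $cb=0$ for all $b$ and $\varphi(c)=0$, i.e. $c\in\an(B)\cap\Ker(\varphi)$; this is the contribution $(\an(B)\cap\Ker(\varphi))\times\{0\}$ already identified in item \eqref{galleta}, and under the present hypotheses this subspace is nonzero and sits inside $\an(A)$. If $\mu\neq 0$, rescale so that $\mu=1$; then the first equation reads $cb=-\varphi(b)$ for all $b\in B$, i.e. $L_c=-\varphi$ as linear maps $B\to B$ (using commutativity $cb=bc$), and the second reads $\varphi(c)=-b_0$, i.e. $-L_c(c)=-b_0$, that is $c^2=b_0$. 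So the existence of an element of $\an(A)$ with nonzero second coordinate is precisely the existence of $b\in B$ with $\varphi=-L_b$ and $b^2=b_0$ — exactly the hypothesis that item \eqref{mermelada} negates. Hence under the hypothesis of \eqref{mermelada} there is \emph{no} such $c$, the case $\mu\neq 0$ is vacuous, and $\an(A)$ consists entirely of elements of the form $(c,0)$ with $c\in\an(B)\cap\Ker(\varphi)$; that is, $\an(A)=(\an(B)\cap\Ker(\varphi))\times\{0\}$, which is what we want. The hypothesis $\an(A)\neq\{0\}$ is then automatically consistent (indeed forced) because $(\an(B)\cap\Ker(\varphi))\times\{0\}\neq 0$ by assumption, so no separate argument is needed there.

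I do not expect a genuine obstacle here: the statement is essentially a bookkeeping corollary of the $\mu=0$ / $\mu\neq 0$ dichotomy, and most of the needed identities are available from items \eqref{galleta} and \eqref{bizcocho} of the same proposition. The one point to be careful about is the logical structure of the hypotheses — one must use the negative hypothesis (no $b$ with $b^2=b_0$ and $\varphi=-L_b$) to kill the $\mu\neq 0$ branch cleanly, and one must note that $\an(A)\neq\{0\}$ combined with that negative hypothesis still leaves room only for the $\mu=0$ contribution, which is nonzero by the remaining hypothesis; so the three hypotheses are mutually consistent and the conclusion is an equality, not merely an inclusion. I would close by remarking that \eqref{mermelada} is the exact complement of \eqref{caramelo}: together they say that when $(\an(B)\cap\Ker(\varphi))\times\{0\}\neq 0$ and $\an(A)\neq 0$, the annihilator either does or does not pick up the extra line $\K(b,1)$ according to whether the pair $(\varphi,b_0)$ satisfies $\varphi=-L_b$, $b^2=b_0$.
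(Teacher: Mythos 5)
Your proposal is correct and follows essentially the same route as the paper: both reduce membership in $\an(A)$ to the system $cb+\mu\varphi(b)=0$ for all $b\in B$ together with $\varphi(c)+\mu b_0=0$, and then split according to whether the second coordinate $\mu$ vanishes, which yields the $(\an(B)\cap\Ker(\varphi))\times\{0\}$ contribution in the $\mu=0$ branch and the condition $\varphi=-L_c$, $c^2=b_0$ in the $\mu\neq 0$ branch. The only cosmetic difference is that you test against the generators $(b,0)$ and $(0,1)$ while the paper multiplies by a general $(x,\lambda)$ and specializes; the bookkeeping for the seven items is identical.
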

\begin{proof}
To prove item \eqref{tarta} notice that if $b\in\Ker(\varphi)$ then we have $(b,0)(0,\l)=(\l\varphi(b),0)=(0,0)$ and hence $(b,0)\in\an_{B\times \{0\}}(\{0\}\times\K)$. Conversely if 
$(b,0)(\{0\}\times\K)=\{0\}$ we have $\varphi(b)=0$. 

For the assertion \eqref{galleta} take $(b,0)\in\an(A)$. Then 
$(b,0)(x,\l)=0$ for any $x\in B$ and $\l\in\K$. Thus $bx+\l\varphi(b)=0$, which implies that $b\in\an(B)$ (taking $\l=0$) and  $b\in\Ker(\varphi)$ (taking $x=0$). Therefore $(b,0)\in(\an(B)\cap\Ker(\varphi))\times \{0\}$. Now if 
$(b,0)\in (\an(B)\cap\Ker(\varphi))\times \{0\}$ we have 
$bB=\{0\}$ and $\varphi(b)=0$. So 
$(b,0)(x,\l)=(bx+\l\varphi(b),0)=(0,0)$, what amounts to say that $(b,0)\in\an(A)\cap(B\times \{0\})$. 

Next we prove the statement \eqref{vainilla}. We assume $(\an(B)\cap\Ker(\varphi))\times \{0\}=\{0\}$. If $\an(A)\ne \{0\} $ take an arbitrary nonzero $(b,\l)\in\an(A)$. Observe that $\l\ne 0$, or else $(b,\l)\in\an(A)\cap(B\times \{0\})=\{0\}$ by the previously proved item. Now, for any other nonzero element $(b',\l')\in\an(A)$ we also have $\l'\ne 0$, so that there is a nonzero scalar $k$ with $\l'=k\l$. This implies that $k(b,\l)-(b',\l')=(kb-b',0)\in\an(A)\cap(B\times \{0\})=\{0\}$. So we have proved that any nonzero element in $\an(A)$ is a multiple of $(b,\l)$. Whence $\dim(\an(A))=1$. 

For statement \eqref{sirope}, consider $(x,\l) \in A$. Then $(b,1)(x,\l)=(bx+\l\varphi(b)+\varphi(x)+\l b_0,0)=(0,0)$, and so $(b,1)\in \ann(A)$. 

To prove the assertion \eqref{bizcocho} notice that we already know that $\an(A)$ has dimension $1$. Take a generator of the form $(b,1)$ of $\an(A)$ (this is unique). Then $0=(b,1)(x,\l)=(bx+\l\varphi(b)+\varphi(x)+\l b_0,0)$ for any $x\in B$ and $\l\in\K$. This implies that $bx+\varphi(x)=0$ for any $b \in B$ and $\varphi(b)=-b_0$. Then $\varphi=-L_b$, so that $b^2=b_0$.  

Next we prove item  \eqref{caramelo}. Observe that 
$\K(b,1)\subset\an(A)$ and, by what was proved in item \eqref{galleta}, $\left(\an(B)\cap\Ker(\varphi)\times\{0\}\right)\subset\an(A)$. So we have 
$ \K(b,1)\oplus\left(\an(B)\cap\Ker(\varphi)\times\{0\}\right)\subset\an(A)$.
Finally, if $(b',\l)\in\an(A)$ and $\l=0$, we apply item (\ref{galleta}) and obtain 
$(b',\l)\in (\an(B)\cap\Ker(\varphi))\times\{0\}$. If $\l\ne 0$ we have 
$(b',\l)=\l(b'',1)$, where $(b'',1)\in\an(A)$. Since $(b,1)\in\an(A)$, we have 
$(b''-b,0)\in\an(A)\cap(B\times\{0\})$, and by item~\eqref{galleta} 
$(b''-b,0)\in(\an(B)\cap\Ker(\varphi))\times\{0\}$. Thus 
\[(b',\l)=\l(b'',1)=\l(b+b''-b,1)=\l(b,1)+\l (b''-b,0)\in\K(b,1)\oplus\left(\an(B)\cap\Ker(\varphi)\times\{0\}\right).\]

To prove the final item (\ref{mermelada}) notice that, on one hand, it is straightforward to check that $(\an(B)\cap\Ker(\varphi))\times\{0\}\subset\an(A)$. For the converse inclusion, take $(b,\l)\in\an(A)$. We prove that necessarily $\l=0$. Otherwise there is an element of the form $(b,1)\in\an(A)$. But then, for any $x\in B$ we have $0=(b,1)(x,0)=(bx+\varphi(x),0)$, giving $\varphi=-L_b$. Also $0=(b,1)(0,1)=(\varphi(b)+b_0,0)$, giving $\varphi(b)=-b_0$. Since $\varphi=-L_b$ we have $b^2=b_0$. Since such a $b$ does not exist, we conclude that $\l=0$ and $\an(A)\subset (\an(B)\cap\Ker(\varphi))\times\{0\}$.
 \end{proof}

\begin{corollary}\label{flan}
Let $A=\Au{(B,\varphi,b_0)}$. Then the following assertions hold:
\begin{enumerate}
\item If $\an(B)=0$ and there is no $b\in B$ with $b^2=b_0$ and $\varphi=-L_b$, then $\an(A)=0$.
\item If $\an(A)=0$, then there is no $b\in B$ such that $b^2=b_0$ and $\varphi=-L_b$.
\end{enumerate}
\end{corollary}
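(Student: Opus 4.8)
The plan is to derive both items directly from Proposition~\ref{brownie}, since they are essentially the logical contrapositives of the classification of $\an(A)$ given there.

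For item (1), I would argue as follows. Assume $\an(B)=0$ and that there is no $b\in B$ with $b^2=b_0$ and $\varphi=-L_b$. First observe that $\an(B)=0$ forces $(\an(B)\cap\Ker(\varphi))\times\{0\}=\{0\}$. Now suppose, for contradiction, that $\an(A)\ne\{0\}$. Then we are exactly in the hypotheses of Proposition~\ref{brownie}\,\eqref{bizcocho} (namely $(\an(B)\cap\Ker(\varphi))\times\{0\}=\{0\}$ and $\an(A)\ne\{0\}$), whose conclusion is that there exists $b\in B$ with $b^2=b_0$ and $\varphi=-L_b$. This contradicts our assumption, so $\an(A)=\{0\}$.

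For item (2), assume $\an(A)=0$. Suppose for contradiction that there is $b\in B$ with $b^2=b_0$ and $\varphi=-L_b$. Then Proposition~\ref{brownie}\,\eqref{sirope} gives $(b,1)\in\an(A)$, and since $(b,1)\ne 0$ (its second coordinate is $1$), this contradicts $\an(A)=0$. Hence no such $b$ exists. I would present both items as short paragraphs inside the proof environment, each a two- or three-line contrapositive argument; no display math is needed, so there is no risk of stray blank lines breaking a math environment.

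There is no real obstacle here: the corollary is a bookkeeping consequence of the preceding proposition, and the only mild point of care is noticing in item (1) that $\an(B)=0$ immediately kills the term $\an(B)\cap\Ker(\varphi)$ so that the relevant case \eqref{bizcocho} of Proposition~\ref{brownie} applies rather than cases \eqref{caramelo} or \eqref{mermelada}, which require that intersection to be nonzero.
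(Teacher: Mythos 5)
Your proof is correct and follows essentially the same route as the paper: item (1) is the contrapositive of Proposition~\ref{brownie}\,\eqref{bizcocho} (whose hypothesis is supplied by $\an(B)=0$, with \eqref{vainilla} only implicitly involved as an ingredient of \eqref{bizcocho}), and item (2) is the contrapositive of Proposition~\ref{brownie}\,\eqref{sirope}. Nothing is missing.
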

\begin{proof}

The first item of the corollary follows from items \eqref{vainilla} and \eqref{bizcocho} in Proposition \ref{brownie}. The second item follows from item \eqref{sirope}.

\end{proof}

As it happens with $\Ad$, the adjunction $\Au{(B,\varphi,b_0,k_0)}$ is also an evolution algebra (for a suitable $\varphi)$, as we show in the next proposition.

\begin{proposition} If $B$ is an evolution algebra, then
$A=\Au{(B,\varphi,b_0,k_0)}$ is an evolution algebra for a suitable $\varphi$.
\end{proposition}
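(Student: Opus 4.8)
The plan is to prove the proposition by exhibiting an explicit natural basis of $A$ for an appropriately chosen linear map $\varphi$. I would begin with a natural basis $\{e_i\}_{i=1}^{n}$ of $B$, so that $e_ie_j=0$ for $i\neq j$, and then fix an arbitrary element $c\in B$ — the choice $c=0$ already suffices — and set $\varphi:=-L_c$, i.e. $\varphi(b)=-cb$ for all $b\in B$. This is a linear map $B\to B$, hence an admissible parameter for the construction of Definition~\ref{adjdos}, and with it $A=\Au(B,\varphi,b_0,k_0)$ is well defined.

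Next I would consider, inside $A=B\times\K$, the $n+1$ vectors $f_0:=(c,1)$ and $f_i:=(e_i,0)$ for $1\le i\le n$. These form a $\K$-basis of $A$: they are $n+1$ vectors in the $(n+1)$-dimensional space $B\times\K$, and any $(b,\lambda)\in A$ equals $\lambda f_0+(b-\lambda c,0)$, where $(b-\lambda c,0)$ is a $\K$-combination of $f_1,\dots,f_n$. The key verification is that $f_if_j=0$ whenever $i\neq j$. For $1\le i,j\le n$ with $i\neq j$ this is immediate from \eqref{loque}, since $f_if_j=(e_ie_j,0)=(0,0)$. For the mixed products, \eqref{loque} gives $f_if_0=(e_i,0)(c,1)=(e_ic+\varphi(e_i),0)$, and the choice $\varphi=-L_c$ together with the commutativity of $B$ turns this into $(e_ic-ce_i,0)=(0,0)$. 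Hence $\{f_i\}_{i=0}^{n}$ is a natural basis of $A$, and since $A$ is finite-dimensional, $A$ is an evolution algebra.

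I do not anticipate a genuine obstacle: the assertion is only an existence statement, and allowing $c=0$ reduces the verification to a one-line computation. The sole conceptual point is to recognize that the adjoined vector $(c,1)$ is orthogonal to the embedded copy $B\times\{0\}$ precisely when $\varphi(e_i)=-e_ic$ for every $i$, i.e. exactly when $\varphi=-L_c$; this is coherent with the prominent role played by the multiplication operators $-L_b$ in Proposition~\ref{brownie}. If a sharper statement were desired, the same argument in fact describes all natural bases of $A$ consisting of $B\times\{0\}$ together with one extra vector, but for the proposition as stated the displayed choice of $\varphi$ is enough.
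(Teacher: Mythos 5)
Your proof is correct and follows essentially the same route as the paper: both choose $\varphi=-L_c$ for some $c\in B$ and verify that $\{(e_i,0)\}_{i=1}^n\cup\{(c,1)\}$ is a natural basis of $B\times\K$. The only (immaterial) difference is that the paper insists on a nonzero $c$, whereas you correctly observe that $c=0$, i.e.\ $\varphi=0$, already works.
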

\begin{proof}
Take a natural basis $\{u_i\}$ of $B$ and $x$ any nonzero element. Define $\varphi\colon B\to B$ by $\varphi(u_i)=-xu_i$ for all $i$. Next we prove that the collection $\B:=\{(u_i,0)\}\cup\{(x,1)\}$ is a basis of $A$. First notice that $\B$ is a generator set. Indeed, since $x=\sum \l_iu_i$ for suitable scalars $\l_i \in \K$, we can write $(0,1)=-\sum \l_i(u_i,0)+(x,1)$. Now we will prove that $\B$ is linearly independent. If $\sum \l_i(u_i,0)+\mu(x,1)=(0,0)$ then 
$\mu=0$, implying that $\l_i=0$ for all $i$. Thus $\B$ is a basis of $A$ and 
$(u_i,0)(x,1)=(u_ix+\varphi(u_i),0)=0$ by the definition of $\varphi$. Since $B$ is a subalgebra of $A$ we conclude that $A$ is an evolution algebra.
\end{proof}

\section{Classification in terms of the socle}\label{classoc}
The degenerate evolution  algebras of dimension $3$ have been classified in Theorem \ref{tornillo}.
So, our main goal in this section is the classification of non-degenerate $3$-dimensional evolution algebras in terms of the socle.
These have nonzero socle, therefore we must start considering cases in terms of the dimension of the socle. 

\rm As a first observation regarding the dimension of the socle, notice that if a commutative algebra $A\ne 0$ has zero annihilator and $\dim(\soc(A))>1$ then $\dim(A^2)>1$. Indeed, if $A^2=\K a$ for some nonzero $a$, and we take a nonzero ideal $I$ of $A$, then $IA\subset \K a$ and $IA\ne 0$.
Thus $IA=\K a\subset I$. So $\K a$ is the unique minimal ideal of $A$, which implies that $\soc(A)=\K a$ has dimension $1$.

Before we present our classification result in terms of the socle, wee need two auxiliary results, which we prove below.
Recall that for a inner product $\esc{\cdot , \cdot}\colon V\times V\to \K$
(i.e. symmetric bilinear form) on a $\K$-vector space $V$, the radical of $\esc{\cdot , \cdot}$ is defined as the
subspace $\rad(\esc{\cdot , \cdot}):=\{v\in V\colon \esc{v,V}=0\}$.

\begin{lemma}\label{grajo}
Let $A$ be an evolution $\K$-algebra and $0\ne u\in A$. Fix a subspace $B$ such that $A=\K u\oplus B$ and 
consider the linear map $p\colon A\to \K$ such that $a=p(a)u+b$ according to the decomposition $A=\K u\oplus B$. Define an inner product $\pi\colon A\times A\to \K$ by $\pi(x,y):=p(xy)$ for any $x,y\in A$. Consider now any natural basis $\B=\{b_i\}_{i\in\Lambda}$ of $A$. Then \[\rad(\pi)=\sp{\{b_i\colon \pi(b_i,b_i)=0\}}.\]
In particular, if $\rad(\pi)$ is one-dimensional, any generator of this radical is (up to nonzero scalar multiples) contained in every natural basis of $A$.
\end{lemma}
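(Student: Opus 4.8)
The plan is to compare the given natural basis $\B=\{b_i\}$ with the ``diagonalization'' of $\pi$ coming from $\B$ itself, and to exploit that $\pi$ is built from an evolution multiplication, so it is automatically diagonal in any natural basis. First I would observe that since $\B=\{b_i\}_{i\in\Lambda}$ is a natural basis, $b_ib_j=0$ for $i\ne j$, whence $\pi(b_i,b_j)=p(b_ib_j)=0$ for $i\ne j$. Thus the matrix of $\pi$ relative to $\B$ is the diagonal matrix $\mathrm{diag}(\pi(b_1,b_1),\dots,\pi(b_n,b_n))$. For a symmetric bilinear form given by a diagonal matrix it is elementary linear algebra that the radical is exactly the span of those basis vectors whose diagonal entry vanishes: if $v=\sum_i\lambda_ib_i$, then $\pi(v,b_j)=\lambda_j\pi(b_j,b_j)$, so $v\in\rad(\pi)$ iff $\lambda_j\pi(b_j,b_j)=0$ for every $j$, i.e.\ iff $\lambda_j=0$ whenever $\pi(b_j,b_j)\ne 0$. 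This gives $\rad(\pi)=\sp\{b_i:\pi(b_i,b_i)=0\}$.

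For the ``in particular'' clause, suppose $\dim\rad(\pi)=1$. By the formula just established, exactly one index $i_0$ satisfies $\pi(b_{i_0},b_{i_0})=0$ (if two or more vanished the radical would have dimension $\ge 2$; if none vanished it would be $0$), and $\rad(\pi)=\K b_{i_0}$. Now let $\B'=\{c_j\}_{j\in\Lambda}$ be any other natural basis of $A$. The same argument applies verbatim to $\B'$: the matrix of $\pi$ in $\B'$ is diagonal, exactly one diagonal entry $\pi(c_{j_0},c_{j_0})$ vanishes (since $\dim\rad(\pi)=1$ is intrinsic, not basis-dependent), and $\rad(\pi)=\K c_{j_0}$. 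Comparing, $\K b_{i_0}=\rad(\pi)=\K c_{j_0}$, so $c_{j_0}$ is a nonzero scalar multiple of $b_{i_0}$; equivalently, any generator of $\rad(\pi)$ is, up to a nonzero scalar, an element of $\B'$. Since $\B'$ was arbitrary, every natural basis of $A$ contains (up to scaling) a generator of $\rad(\pi)$.

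I do not anticipate a genuine obstacle here; the statement is essentially the remark that $\pi$ diagonalizes in \emph{every} natural basis (because the off-diagonal entries are products of distinct natural basis vectors, hence zero), combined with the standard description of the radical of a diagonal symmetric form. The only point requiring a word of care is the logical step that ``$\dim\rad(\pi)=1$'' forces exactly one vanishing diagonal coefficient in any fixed natural basis — but this is immediate from the displayed formula for $\rad(\pi)$, since the number of zero coefficients equals $\dim\rad(\pi)$. One could phrase the whole argument without ever picking a second basis by noting directly that $\rad(\pi)$ is a fixed subspace and the first part expresses it as a span of basis vectors for whichever natural basis one uses; I would likely present it that way for brevity.
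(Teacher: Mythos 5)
Your proof is correct and follows essentially the same route as the paper's: the key computation $\pi(v,b_j)=\lambda_j\pi(b_j,b_j)$ (using that off-diagonal entries vanish because $b_ib_j=0$ in a natural basis) is exactly the paper's argument, with your version merely making the diagonality of $\pi$ and the ``in particular'' clause more explicit. No issues.
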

\begin{proof}
Take an arbitrary $x\in\rad(\pi)$ and write $x=\sum_i x_i b_i$ with $x_i\in \K$. Then for any $j$ we have
$0=\pi(x,b_j)=\sum_i x_i\pi(b_i,b_j)=x_j\pi(b_j,b_j)$. So, if $x_j\ne 0$ then $\pi(b_j,b_j)=0$ and hence
$x\in\sp{\{b_i\colon \pi(b_i,b_i)=0\}}$. The other inclusion is clear.
\end{proof}

\begin{lemma}\label{cabrao} Let $A$ be a three-dimensional evolution algebra with a two-dimensional ideal $I$. Then, one of the following mutually excluding possibilities holds.
\begin{enumerate}
    \item \label{uno} $I$ is the linear span of two of the elements in a natural basis of $A$.
    \item \label{dos} There is no natural basis such that $I$ is generated by two of its elements, but
there exists a natural basis $\B=\{e_1,e_2,e_3\}$ of $A$ such that $I$ is the linear span of $e_i$ and $e_j+e_k$, with $i,j,k$ different. In this case $I$ is an evolution ideal.
     \item \label{tres} The ideal $I$ does not verify any of the cases \eqref{uno} and \eqref{dos}, but
     there exists a natural basis $\B=\{e_1,e_2,e_3\}$ of $A$ such that $I$ is the linear span of $e_i+e_j$ and $e_j+e_k$, with $i,j,k$ different. If $A$ is perfect, then $I$ is not an evolution ideal. 
\end{enumerate}
\end{lemma}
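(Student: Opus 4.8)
The plan is to reduce the trichotomy to a statement about codimension-one subspaces of $A$ together with a rescaling argument, and then to settle the two extra assertions (that $I$ is an evolution ideal in case \eqref{dos}, and is not one when $A$ is perfect in case \eqref{tres}) by short direct computations.

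First I would fix a natural basis $\B=\{e_1,e_2,e_3\}$ of $A$ and write $I=\ker f$ for a nonzero linear functional $f=(a_1,a_2,a_3)$ in $\B$-coordinates. Permuting and rescaling the $e_i$ again produces a natural basis (the relations $e_ie_j=0$ are preserved), and such a change transforms $f$ by the corresponding permutation and rescaling of its entries, so one may normalise the nonzero entries of $f$. The remaining analysis is then combinatorial in the number of zero entries of $f$: if $f$ has two zeros, then $I=\sp(e_i,e_j)$ for two elements of a natural basis; if $f$ has exactly one zero, taking the kernel basis $e_i,\ a_ke_j-a_je_k$ and absorbing the scalars into $e_j,e_k$ yields a natural basis with $I=\sp(e_i,e_j+e_k)$; and if $f$ has no zero, normalising to $f=(1,1,1)$ and flipping the sign of one basis vector yields a natural basis with $I=\sp(e_i+e_j,e_j+e_k)$, where in all cases $i,j,k$ are pairwise distinct.

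Next I would lay out the trichotomy. Case \eqref{uno} is exactly the assertion that $I$ has the extension property, equivalently that $I$ is the span of two elements of some natural basis of $A$. If $I$ does not have the extension property but some natural basis realises $I=\sp(e_i,e_j+e_k)$, we are in case \eqref{dos}; and then $\{e_i,e_j+e_k\}$ is itself a natural basis of $I$, because $e_i(e_j+e_k)=e_ie_j+e_ie_k=0$, so $I$ is an evolution ideal. Otherwise I claim case \eqref{tres} holds: it only remains to produce a natural basis with $I=\sp(e_i+e_j,e_j+e_k)$, and by the combinatorial reduction this can fail only if the functional $f$ has a zero entry relative to every natural basis — but two zeros would give the extension property, and exactly one zero would (after rescaling) place $I$ in case \eqref{dos}, both excluded by assumption. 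Mutual exclusivity is built into the formulation, since cases \eqref{dos} and \eqref{tres} exclude the previous ones by hypothesis.

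The only step requiring a genuine argument is the last assertion of case \eqref{tres}: if $A$ is perfect then $I=\sp(e_1+e_2,e_2+e_3)$ carries no natural basis. Here $A^2=A$, so $e_1^2,e_2^2,e_3^2$ span $A$ and are therefore linearly independent. If $\{u,v\}$ were a natural basis of $I$, write $u=\alpha e_1+(\alpha+\beta)e_2+\beta e_3$ and $v=\gamma e_1+(\gamma+\delta)e_2+\delta e_3$; using $e_ie_j=0$ for $i\ne j$, the relation $uv=0$ becomes $\alpha\gamma\,e_1^2+(\alpha+\beta)(\gamma+\delta)\,e_2^2+\beta\delta\,e_3^2=0$, hence $\alpha\gamma=(\alpha+\beta)(\gamma+\delta)=\beta\delta=0$ by linear independence. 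Since $u\ne 0\ne v$, a short case analysis finishes the argument: if $\alpha=0$ then $\beta\ne 0$, so $\delta=0$, so $\gamma\ne 0$, whence $(\alpha+\beta)(\gamma+\delta)=\beta\gamma\ne 0$, a contradiction; the case $\gamma=0$ is symmetric. I expect this final computation to be the main (indeed essentially the only non-routine) obstacle; everything else is bookkeeping with rescalings of natural bases and with the logical layout of the three cases.
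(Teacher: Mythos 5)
Your proof is correct and follows essentially the same route as the paper's: a linear-algebra classification of the position of the two-dimensional subspace $I$ relative to a suitably rescaled natural basis, followed by the identical computation showing that $\alpha\gamma=(\alpha+\beta)(\gamma+\delta)=\beta\delta=0$ is inconsistent with $u,v\neq 0$ when $A$ is perfect. The only difference is cosmetic: you describe $I$ as the kernel of a functional and case-split on its zero entries, while the paper row-reduces a spanning matrix of $I$ (and, for one subcase, invokes the ideal property via certain determinants $\Delta_i$ --- a step your argument shows is not actually needed for the trichotomy itself).
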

\begin{proof}
Let $\{e_i\}_{i=1}^3$ be a natural basis of $A$ with structure matrix   $M=(\w_{ij})$. Assume that $u_1=xe_1+ye_2+ze_3$, $u_2=x'e_1+y'e_2+z'e_3$ is a basis of the ideal $I$.
For $i=1,2,3$ denote \[\tiny\Delta_i:=\left|\begin{matrix}x & y & z\cr x' & y' & z'\cr \w_{1i} & \w_{2i} & \w_{3i} 
\end{matrix}\right|.\]
Since $e_1u_1=x\w_{11}e_1+x\w_{21}e_2+x\w_{31} e_3\in I$ we have that 
\[\hbox{rank}\begin{pmatrix}x & y & z\cr x' & y' & z'\cr x\w_{11} & x\w_{21} & x\w_{31} 
\end{pmatrix}=2,\hbox{ which implies } x\Delta_1=0.\]
Also from $e_1u_2\in I$ we deduce that $x'\Delta_1=0$. Proceeding in this way we get, not only that
$0=x\Delta_1=x'\Delta_1$, but also that $0=y\Delta_2=y'\Delta_2$ and $0=z\Delta_3=z'\Delta_3$.
If some $\Delta_i\ne 0$ then $I$ is the linear span of two elements of the basis $\{e_i\}$ of $A$. So we assume in the sequel that $\Delta_i=0$ for any $i=1,2,3$.
The row reduced echelon form of $\begin{pmatrix}x & y & z\cr x' & y' & z'\end{pmatrix}$ is one of the following:
\[\begin{pmatrix}1 & 0 & \alpha \cr 0 & 1 & \a'\end{pmatrix}, \begin{pmatrix}1 & 0 & \a\cr 0 & 0 & 1\end{pmatrix}, \begin{pmatrix}0 & 1 & \a\cr 0 & 0 & 1\end{pmatrix},\quad (\a,\a'\in\K).\]
So we can find a basis $\{v_1,v_2\}$ of $I$ such that one of the following holds:
\begin{enumerate}[(a)]
    \item \label{a} $v_1=e_1+\a e_3$, $v_2=e_2+\a' e_3$.
    \item \label{b} $v_1=e_1+\a e_3$, $v_2=e_3$.
    \item \label{c} $v_1=e_2+\a e_3$, $v_2=e_3$.
\end{enumerate}
In cases \eqref{b} and \eqref{c} we have a basis of $I$ consisting of two vectors of the natural basis. So we focus on the case \eqref{a}.
If $\a=\a'=0$, then again there is a basis of $I$ consisting of elements of the natural basis. Next we consider the following possibilities:
\begin{itemize}
    \item[\rm i)] If $\a =0$ and $\a' \ne 0$, or  $\a' =0$ and $\a \ne 0$, then we can construct a new  natural basis of $A$ such that $I$ is in the possibility \eqref{dos}.
\item[\rm ii)]If $\a \ne 0$ and $\a' \ne 0$, then we can construct a new  natural basis of $A$ such that $I$ is in the possibility \eqref{tres}.
\end{itemize}
Finally let us prove that if $A^2=A$ and $I$ is generated by $e_i+e_j$ and $e_j+e_k$, with $i, j,  k$ different, then $I$ is not an evolution algebra: 
assume on the contrary that $u_1=x(e_i+e_j)+y(e_j+e_k)$ and $u_2=x'(e_i+e_j)+y'(e_j+e_k)$ is a natural basis of $I$. Then
$0=xx'(e_i^2+e_j^2)+yy'(e_j^2+e_k^2)+(xy'+x'y)e_j^2=xx'e_i^2+yy'e_k^2+(xx'+yy'+xy'+yx')e_j^2$ and hence $xx'=yy'=xy'+yx'=0$, which is inconsistent with $xy'-yx'\ne 0$.
\end{proof}

\subsection{Socle of dimension three}

Under this hypothesis the algebra is either simple or a direct sum of simple evolution algebras of dimension $\le 2$. In fact the simple algebras 
provide moduli sets depending on up to $6$ parameters,  which is one of the most densely populated collection of isomorphism classes of algebras. 

\remove{
\begin{proposition}\label{porra} Let $A=\Au{(B,\varphi,b_0,k_0)}$. The following assertions hold:
\begin{enumerate}
\item \label{migas} Any ideal $J$ of $A$ not contained in $B\times\{0\}$ is of the form $J=(I\times\{0\})\oplus\K(x,1)$, for some ideal $I\subseteq B$ and some $x\in B$. 
\item \label{ensalada} Let $I\triangleleft B$ and $x\in B$. Then the following are equivalent:
  \begin{enumerate}
      \item \label{pan} $\im(\varphi+L_x)\subset I$ and $\varphi(x)+b_0-k_0x\in I$.
      \item \label{zumo} $(I\times\{0\})\oplus\K(x,1)\triangleleft A$.
  \end{enumerate}
\end{enumerate}
\end{proposition}
\begin{proof}
For assertion \eqref{migas} assume that $J\triangleleft A$ with $J\not\subset B\times\{0\}$. Let $I$ be the ideal of $B$ such that $I\times\{0\}=J\cap(B\times\{0\})$. We know that $J$ contains some element of type 
$(x,1)$ so we can write $(I\times\{0\})\oplus\K(x,1)\subset J$. Now take an element $(y,\l)\in J$ with $\l\ne 0$. Then 
$(y,\l)-\l(x,1)=(y-\l x,0)\in I\times\{0\}$, hence $(y,\l)\in (I\times\{0\})\oplus\K(x,1)$.\newline
\indent Now, let us prove the implication \eqref{pan}$\Rightarrow$\eqref{zumo}. Consider
$(i+\l x,\l) \in (I\times\{0\})\oplus\K(x,1)$ and take  any $(b, \mu) \in A$, then
$(i+\l x,\l)(b,\mu)=(ib+\mu \varphi(i)+\l xb+\mu\l\varphi(x)+\l\varphi(b)+\l\mu b_0,\l\mu k_0)=(ib+\mu \varphi(i)+\l xb+\mu\l\varphi(x)+\l\varphi(b)+\l\mu b_0-\l\mu k_0x,0)+\l\mu k_0(x,1)
 \in (I\times\{0\})\oplus\K(x,1)$ (observe that $\varphi(I)\subset I$ because we have $(L_x+\varphi)(I)\subset I$). Conversely, since $(I\times\{0\})\oplus\K(x,1)\triangleleft A$, then $(x,1)(b,0)=(xb+\varphi(b),0) \in I \times \{0\}$, and so $\im (L_x+\varphi)\subset I$. Furthermore, $(x,1)(0,\l)=(\l \varphi(x)+\l b_0,k_0 \l) \in (I \times \{0\})\oplus\K(x,1)$. Therefore $(k_0x+\varphi(x)+b_0-k_0x,k_0)=(\varphi(x)+b_0-k_0x,0)+k_0(x,1) \in (I\times\{0\})\oplus\K(x,1)$ and this implies $\varphi(x)+b_0-k_0x \in I$.
  \end{proof}
}

\remove{
\begin{lemma}\label{tartadequeso}
Let $B$ be an evolution algebra over $\K$ and let $A=\Au(B,\varphi,b_0)$. 
If $A$ has zero annihilator then, for any $\{0\}\ne J\triangleleft A$, we have 
$J\cap B\ne 0$.
\end{lemma}
\begin{proof}
If $J$ is a nonzero ideal of $A$ such that $J\cap B=0$, then 
$A=J\oplus B$ and $J^2\subset A^2\subset B$. But $J$ being an ideal, we have that $J^2\subset J$ and hence $J^2\subset B\cap J=0$. Since we also have that $JB\subset J\cap B=0$, we conclude $J\subset\an(A)=0$, a contradiction.
\end{proof}
}

\begin{proposition}\label{threedim}
If $A$ is a $3$-dimensional evolution $\K$-algebra with zero annihilator, whose socle is $A$ itself, then
$A$ is (isomorphic to) one of the following:
\begin{enumerate}
    \item A simple evolution algebra.
    \item $A=B\oplus \K$, with $B$ a simple evolution algebra of dimension $2$, and the product in $A$ given by \[(b,\lambda)(b',\lambda')=(bb',\lambda\lambda').\]
    \item $\K\oplus \K\oplus \K$ with componentwise product.
\end{enumerate}
\end{proposition}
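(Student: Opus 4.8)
The plan is to reduce everything to the classical socle decomposition recalled above. Since $A$ is finite-dimensional, commutative, and $\soc(A)=A$, we may write $A=J_1\oplus\cdots\oplus J_r$ as a finite direct sum of minimal ideals $J_i$, each satisfying $AJ_i\ne 0$, and by \cite[Lemma 5.2]{CSV1} every $J_i$ is itself an evolution algebra. The first (and only slightly delicate) point is to upgrade the statement that $J_i$ is a \emph{minimal ideal of $A$} to the statement that $J_i$ is a \emph{simple algebra}. For this I would first note that $J_iJ_j\subseteq J_i\cap J_j=0$ whenever $i\ne j$, so the decomposition is a direct sum of algebras and $AJ_i=J_i^2$; hence the condition $AJ_i\ne 0$ says precisely that $J_i^2\ne 0$. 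Next, because $J_iJ_j=0$ for $j\ne i$, any ideal $I$ of $J_i$ satisfies $IA=IJ_i\subseteq I$, so $I\triangleleft A$; minimality of $J_i$ then forces $I=0$ or $I=J_i$. Together with $J_i^2\ne 0$ this shows each $J_i$ is simple. This is exactly the place where being a summand of the socle matters, as opposed to being an arbitrary minimal ideal; compare Example~\ref{superejemplo}.

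Once this is established, the proof is a short finite case analysis according to the partition of $\dim A=3$ given by the dimensions $\dim J_i$, namely $3$, or $2+1$, or $1+1+1$. If $r=1$ then $A=J_1$ is simple, which is case (1). If $r=2$ with $\dim J_1=2$ and $\dim J_2=1$, then $J_1$ is a simple two-dimensional evolution algebra and $J_2=\K e$ with $e^2=\mu e$ and $\mu\ne 0$ (else $J_2^2=0$); replacing $e$ by $\mu^{-1}e$ we get $e^2=e$, so $J_2\cong\K$. Writing $B:=J_1$ and using $J_1J_2=0$, the map $b+\lambda e\mapsto(b,\lambda)$ is an isomorphism onto $B\oplus\K$ with product $(b,\lambda)(b',\lambda')=(bb',\lambda\lambda')$, which is case (2). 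If $r=3$, each $J_i=\K e_i$ with $e_i^2=\mu_i e_i$ and $\mu_i\ne 0$; rescaling each $e_i$ so that $e_i^2=e_i$ yields $A\cong\K\oplus\K\oplus\K$ with componentwise product, which is case (3).

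As for the main obstacle: there is no deep difficulty here. The only subtle step is the passage from minimal ideal to simple algebra, which fails for a general minimal ideal but holds in our situation because each summand of the socle is annihilated by every other summand; the rest is the routine bookkeeping of the three partitions of $3$ together with the rescaling of idempotents in the one-dimensional pieces. (Note also that the hypothesis $\an(A)=0$ is in fact automatic once $\soc(A)=A$, since each $\an(J_i)$ is an ideal of the simple algebra $J_i$ with $J_i^2\ne 0$, hence zero.)
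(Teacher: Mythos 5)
Your proposal is correct and follows essentially the same route as the paper: decompose $A=\soc(A)$ into minimal ideals, observe that each summand has nonzero square and that its ideals are ideals of $A$ (hence each summand is simple), and then run the case analysis over the partitions $3$, $2+1$, $1+1+1$ with the rescaling of idempotents in the one-dimensional pieces. Your write-up is somewhat more explicit than the paper's (e.g.\ in isolating $AJ_i=J_i^2$ and in the remark that $\an(A)=0$ is automatic), but there is no substantive difference in approach.
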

\begin{proof}
 If the socle consists of a unique minimal ideal then
it is a simple algebra. Otherwise we have a direct sum of two or three minimal ideals.
If $A=\soc(A)=I\oplus J$, with $\dim(I)=2$ and $\dim(J)=1$, then both ideals are evolution algebras (see \cite[Lemma 5.2.]{CSV1}) with zero annihilator.
Hence $J=\K e$, where $e$ is an idempotent,
and $I^2\ne 0$ since on the contrary $IA=0$. Thus $I$ is a simple evolution algebra, because any ideal of $I$ is an ideal of $A$ (given that $I$ is a summand of $A$). Finally, if the socle has three components then $A\cong \K^3$ with componentwise operations.
\end{proof}

\subsection{Socle of dimension two}
In this section we will study the case $\dim(\soc(A))=2$. 
We start this subsection describing a number of moduli sets which will appear in the classification Theorem~\ref{partwotwo}.

\begin{definition}\label{grfghgth}\rm
Recall that $(\K^\times)^{\esc{2}}:=\{k^2\colon k\in\K^\times\}$ is the group of nonzero squares of $\K$. 
\begin{enumerate}
    \item \label{chicas} Consider the group $\Z_2:=\{0,1\}$ and 
 the product group $(\K^\times)^{\esc{2}}\times\Z_2$. This group acts on the set $\M_2(\K)$ of $2\times 2$ matrices with coefficients in $\K$. The action is defined by
 \[
 \begin{array}{cc}
      &  [(\K^\times)^{\esc{2}}\times\Z_2]\times\M_2(\K)\to\M_2(\K) \\
      & (\lambda,i)\cdot M:=\lambda E_{12}^i ME_{12}^i,
 \end{array}
\]
\noindent where $E_{12}=\tiny\begin{pmatrix}0 & 1\cr 1 &0\end{pmatrix}$, $E_{12}^0=\id_{2}$ and $E_{12}^1=E_{12}$, see Notation~\ref{domingo}.

\item \label{chico} Consider  the group $(\K^\times)^{\esc{2}}\times \K^{\times}$. This group acts on the set $\M_2(\K)$ of $2\times 2$ matrices with coefficients in $\K$. The action is defined in the following way
\[
 \begin{array}{cc}
      &  [(\K^\times)^{\esc{2}}\times\K^\times]\times\M_2(\K)\to\M_2(\K) \\
      & (\alpha,\beta)\cdot M:=\tiny\begin{pmatrix}\a & 0\cr 0 &\b\end{pmatrix}M\tiny\begin{pmatrix}\a^{-2} & 0\cr 0 &\b^{-2}\end{pmatrix}.
 \end{array}
\]

\item \label{nino} The product group $\K^\times\times\Z_2$ acts on the set $\M_3(\K)$ of $3\times 3$ matrices with coefficients in $\K$.  The action is defined in the following way
 \[
 \begin{array}{cc}
      &  (\K^\times\times\Z_2)\times\M_3(\K)\to\M_3(\K) \\
      & (\lambda,i)\cdot \begin{pmatrix} M & v \cr x & \w_{33} \end{pmatrix}:=  \begin{pmatrix} \lambda^2 E_{12}^i ME_{12}^i & v \cr x & \lambda \w_{33} \end{pmatrix} 
 \end{array}.
\]

\item \label{nina} The product group $\K^\times\times\K^\times$ acts on the set $\M_3(\K)$ of $3\times 3$ matrices with coefficients in $\K$. The action is given by
 \[
 \begin{array}{cc}
      &  (\K^\times\times\K^\times)\times\M_3(\K)\to\M_3(\K) \\
      & (\a,\b)\cdot \begin{pmatrix} M & v \cr x & \w_{33} \end{pmatrix}:=  \begin{pmatrix} \tiny\begin{pmatrix}\a^2 & 0\cr 0 &\b\end{pmatrix}M\tiny\begin{pmatrix}\a^{-4} & 0\cr 0 &\b^{-2}\end{pmatrix}& v \cr x &  \frac{\w_{33}}{\a} \end{pmatrix} 
 \end{array}.
\]

\item \label{caso} Consider the product group $(\K^\times)^{\esc{2}}\times\Z_2$, which acts on the set $\K^2\setminus\{0\}$ in the following way

\[
 \begin{array}{cc}
      &  [(\K^\times)^{\esc{2}}\times\Z_2]\times(\K^2\setminus\{0\}) \to \K^2\setminus\{0\}\\
      & (\lambda,i)\cdot v :=  \lambda v E_{12}^i .
 \end{array}
\]

\item \label{casa} Let the  group $\K^{\times}$ act on the set $\K^2\setminus\{0\}\times \K^{\times}$, via the action defined by
\[
 \begin{array}{cc}
      &  \K^\times\times[(\K^2\setminus\{0\})
      \times \K^{\times}] \to (\K^2\setminus\{0\})\times \K^{\times}\\
      & \lambda\cdot (x,y,z) :=(\l^2x,\l^2y,\l z  ).
 \end{array}
\]

\end{enumerate}
\end{definition}

\begin{notation}\rm
Let $\mathcal{W}=\left\{ (\w_{ij}) \in \GL_2{(\K)} \,  : \, \w_{12}\w_{21} \neq 0 \right\}$. Observe that,  applying \cite[Corollary 4.6]{CSV1}), the matrices of this set correspond to the structure matrices of the two-dimensional simple evolution algebras.
\end{notation}

\subsubsection{ $\soc(A)$ has the extension property}

We consider first the case in which the socle is generated by two vectors of a natural basis of the algebra.

\begin{theorem}
\label{partwotwo}
Let $A$ be a three-dimensional evolution algebra with zero annihilator.
Assume that $\dim(\soc(A))=2$ and $\soc(A)$ has the extension property.
Then one of the following three excluding possibilities holds:
\begin{enumerate}
\item $\soc(A)$ is a minimal ideal and $\ssi(A)=1$. Equivalently, the structure matrix of $A$ is of the form  $\tiny\begin{pmatrix}M & v \cr 0 & 0\end{pmatrix},$ where $M \in \mathcal{W}$ and $v \neq 0$. Furthermore,
\begin{enumerate}
    \item If $\vert \supp_\B{(e_3^2)}\vert=2$, then the structure matrix of $A$ relative to a suitable basis is \[\tiny{\begin{pmatrix}M & \begin{pmatrix} 1 \cr 1 \end{pmatrix}\cr 0 & 0\end{pmatrix}},\]  where $M\in\mathcal{W}$. This class of algebras are classified by the action of the group $(\K^\times)^{\esc{2}}\times\Z_2$ on $\W$. More precisely, two algebras of this kind, whose structure matrices are \[\tiny{\begin{pmatrix}M & \begin{pmatrix} 1 \cr 1 \end{pmatrix}\cr 0 & 0\end{pmatrix}} \text{ \small and }
    \tiny{\begin{pmatrix}M' & \begin{pmatrix} 1 \cr 1 \end{pmatrix}\cr 0 & 0\end{pmatrix}},\]
    are isomorphic if and only if $M$ and $M'$ are in the same orbit under the action of $(\K^\times)^{\esc{2}}\times\Z_2$
    described in Definition \ref{grfghgth}(\ref{chicas}). A moduli set for this class is 
    $((\K^\times)^{\esc{2}}\times\Z_2 ,\W)$.
    
    \item If $\vert\supp_{\B}{(e_3^2)}\vert=1$, then the structure matrix of $A$ relative to a suitable basis is \[\tiny\begin{pmatrix}M & \begin{pmatrix} 1 \cr 0 \end{pmatrix}\cr 0 & 0\end{pmatrix},\]  where   $M\in \mathcal{W}$. This class of algebras are classified by the group $(\K^\times)^{\esc{2}}\times \K^{\times}$. More precisely,  two algebras of this kind, whose structure matrices are \[\tiny{\begin{pmatrix}M & \begin{pmatrix} 1 \cr 0 \end{pmatrix}\cr 0 & 0\end{pmatrix}} \text{\small and }
    \tiny{\begin{pmatrix}M' & \begin{pmatrix} 1 \cr 0 \end{pmatrix}\cr 0 & 0\end{pmatrix}},\]
    are isomorphic if and only if $M$ and $M'$ are in the same orbit under the action of $(\K^\times)^{\esc{2}}\times\K^{\times}$
    described in Definition \ref{grfghgth}(\ref{chico}). A moduli set for this class is  $((\K^\times)^{\esc{2}}\times\K^\times ,\W)$.
\end{enumerate}

\item $\soc(A)$ is a minimal ideal and $\ssi(A)=2$. Equivalently, $e_3^2 \notin \soc(A)$ and the structure matrix of $A$ is of the form  $\tiny\begin{pmatrix}M & v \cr 0 & \w_{33}\end{pmatrix}$ where $M \in \W$, $\w_{33}\neq 0$ and $v \neq 0$. Furthermore, 
\begin{enumerate}
    \item If $\{1,2\} \subset \supp_\B{(e_3^2)}$, then the structure matrix of $A$ relative to an appropriate basis is
    $\tiny\begin{pmatrix}M & \binom{1}{1}\cr 0 & \w_{33} \end{pmatrix},$
     where $M \in \W$. This class of algebras are classified by the group $\K^{\times} \times \Z_2$. More precisely,  two algebras of this kind, whose structure matrices are \[\tiny{\begin{pmatrix}M & \begin{pmatrix} 1 \cr 1 \end{pmatrix}\cr 0 & \w_{33}\end{pmatrix}} \text{ \small and }
    \tiny{\begin{pmatrix}M' & \begin{pmatrix} 1 \cr 1 \end{pmatrix}\cr 0 & \w_{33}'\end{pmatrix}},\]
    are isomorphic if and only if both matrices are in the same orbit under the action of $\K^\times\times\Z_2$
    described in Definition \ref{grfghgth}(\ref{nino}). A moduli set in this case is 
    $(\K^\times\times\Z_2,\W).$
    
    \item If $1 \notin \supp_\B(e_3^2)$, or $2 \notin \supp_\B(e_3^2)$, then the structure matrix of $A$ relative to a appropriate basis is of the form  $\tiny\begin{pmatrix}M & \binom{1}{0}\cr 0 & \w_{33} \end{pmatrix},$ where $M \in \W$. This class of algebras can be classified by the group $\K^{\times}\times \K^{\times}$. 
    Concretely, two algebras whose structure matrices are \[\tiny{\begin{pmatrix}M & \begin{pmatrix} 1 \cr 0 \end{pmatrix}\cr 0 & \w_{33}\end{pmatrix}} \text{ \small and }
    \tiny{\begin{pmatrix}M' & \begin{pmatrix} 1 \cr 0 \end{pmatrix}\cr 0 & \w_{33}'\end{pmatrix}}\]
    are isomorphic if and only if both matrices are in the same orbit under the action of $\K^\times\times\K^{\times}$
    described in Definition \ref{grfghgth}(\ref{nina}). A moduli set in this case is $(\K^\times\times\K^{\times},\W)$.

\end{enumerate}

\item $\soc(A)$ is the direct sum of two minimal ideals, say $\K u_1$ and $\K u_2$. Then, the structure matrix of $A$ relative to a suitable basis, $B$, is $\tiny\begin{pmatrix} 1 & 0 & \w_{13}\cr 0 & 1 & \w_{23}\cr 0 & 0 &\w_{33}\end{pmatrix}$. Furthermore,

\begin{enumerate}
    \item If $\ssi{(A)}=1$, then the structure matrix relative to $\B$ is $\tiny\begin{pmatrix}\id_{2} & v\cr 0 & 0\end{pmatrix}$, with $v\ne 0$. This class of algebras are classified by the group $(\K^\times)^{\esc{2}}\times\Z_2$. Concretely, two algebras whose structure matrices are \[\tiny\begin{pmatrix}\id_{2} & v\cr 0 & 0\end{pmatrix} \text{\small and } \tiny\begin{pmatrix}\id_{2} & v'\cr 0 & 0\end{pmatrix}\]
    are isomorphic if and only if both matrices are in the same orbit under the action of $(\K^\times)^{\esc{2}}\times\Z_2$ described in Definition \ref{grfghgth}(\ref{caso}).
    The corresponding moduli is $((\K^\times)^{\esc{2}}\times\Z_2,\K^2\setminus\{0\})$.
    
    \item If $\ssi(A)=2$, then the structure matrix of $A$ relative to $\B$ is $\tiny\begin{pmatrix}\id_{2} & v\cr 0 & \w_{33}\end{pmatrix}$, with $v\neq 0$ and $\w_{33} \neq 0$. This class of algebras are classified by the group $\K^{\times}$. More precisely, two algebras of this kind, whose structure matrices are 
    \[\tiny\begin{pmatrix}\id_{2} & v\cr 0 & \w_{33}\end{pmatrix} \text{ \small and } \tiny\begin{pmatrix}\id_{2} & v'\cr 0 & \w_{33}'\end{pmatrix}\]
    are isomorphic if and only if both matrices are in the same orbit under the action $\K^{\times}$ describes in Definition \ref{grfghgth}(\ref{casa}). The moduli for this class is $(\K^\times,\K^2\setminus\{0\}\times\K^\times)$.
    
\end{enumerate}
\end{enumerate}
\end{theorem}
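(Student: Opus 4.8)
The proof is a case-by-case analysis organized by the structure of $\soc(A)$ as a submodule of $A$: it is either a single minimal ideal or a direct sum of two minimal ideals, and in each case the stabilizing index $\ssi(A)$ is either $1$ or $2$ (it cannot exceed $2$ since $\dim(A)=3$ forces $\soc^{(2)}(A)=A$ whenever $\soc(A)\neq A$). The hypothesis that $\soc(A)$ has the extension property means we may fix a natural basis $\B=\{e_1,e_2,e_3\}$ with $\soc(A)=\sp(\{e_1,e_2\})$; this is the running normalization throughout. The first task is to argue that the structure matrix then has the block shape $\tiny\begin{pmatrix}M & v\\ 0 & \w_{33}\end{pmatrix}$ with $M$ a $2\times 2$ block: the zero row below $M$ comes from $\soc(A)$ being an ideal (so $e_1^2,e_2^2\in\soc(A)$), and $v\neq 0$ together with $\an(A)=0$ forces the relevant non-degeneracy. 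The fact that $M\in\W$, i.e. $M$ is invertible with $\w_{12}\w_{21}\neq 0$, should follow from the minimality of $\soc(A)$ as an ideal — via Proposition~\ref{paleta}, a proper ideal inside $\soc(A)$ generated by an $e_i^2$ would violate minimality, and by \cite[Corollary 4.6]{CSV1} the two-dimensional simple evolution algebras are exactly those with structure matrix in $\W$. The alternative, $\soc(A)=\K u_1\oplus \K u_2$ a sum of two one-dimensional ideals, forces each $u_i$ to be (a scalar multiple of) an idempotent basis vector, hence $M=\id_2$ after rescaling.

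\textbf{Normalizing $v$.} Within each of the four branches the remaining freedom is a change of natural basis fixing $\soc(A)$ setwise. The key computational step is to show that the third basis vector $e_3$ can be adjusted — replacing $e_3$ by $e_3 + \mu_1 e_1 + \mu_2 e_2$ and rescaling — so that $v=e_3^2$ projected to $\soc(A)$ becomes $\binom{1}{1}$ or $\binom{1}{0}$ according to whether $|\supp_\B(e_3^2)\cap\{1,2\}|$ is $2$ or $1$; this invariant cannot be changed because the relevant sub-block of $M$ being invertible (part of $M\in\W$) means the support of $v$ is basis-independent in the appropriate sense. In the case $\ssi(A)=2$ one additionally has $\w_{33}\neq 0$ and $e_3^2\notin\soc(A)$; here $\w_{33}$ survives as a genuine parameter (it can be rescaled but not eliminated), which is exactly why the classifying groups in parts (2) and (3b) differ from those in (1) and (3a) by a $\K^\times$-factor rather than a $(\K^\times)^{\esc 2}$-factor.

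\textbf{Extracting the moduli set.} For the isomorphism statements one computes, in each branch, the stabilizer in $\GL_3(\K)$ of the normal form: a change-of-basis matrix preserving naturality, preserving $\soc(A)$, and fixing the normalized $v$. This subgroup acts on the residual data — the matrix $M\in\W$ in branches (1) and (2), the vector $v\in\K^2\setminus\{0\}$ in branch (3) — and the induced action is precisely one of the six actions catalogued in Definition~\ref{grfghgth}. The scalars appearing there ($\lambda^2$, $E_{12}$-conjugation, the diagonal $\tiny\begin{pmatrix}\a & 0\\0&\b\end{pmatrix}$-conjugations) are the images of the diagonal and permutation parts of an admissible change of basis; the square $\lambda^2$ arises because a scaling $e_i\mapsto \lambda e_i$ acts quadratically on $e_i^2$. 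One then invokes the general principle from Subsection~\ref{modulisets} to conclude that isomorphism classes correspond to orbits.

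\textbf{Main obstacle.} The conceptual steps are clean; the real labor is the bookkeeping in the isomorphism direction — verifying that the admissible base changes are exactly captured by the stated group actions, and in particular checking that no hidden identifications are missed (e.g. that in branch (1)(a) the $E_{12}$-swap is the \emph{only} extra symmetry beyond scaling, or that in branch (2) the parameter $\w_{33}$ genuinely contributes an orbit coordinate). Separating the six sub-cases by the invariants $|\supp_\B(e_3^2)\cap\{1,2\}|$, $\ssi(A)$, and the number of minimal summands of $\soc(A)$, and confirming these are mutually exclusive and exhaustive, is where care is needed; the rest reduces to the linear algebra of conjugating $2\times 2$ matrices by diagonal and permutation matrices, which I would not write out in full.
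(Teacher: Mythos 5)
Your proposal follows essentially the same route as the paper: split on whether $\soc(A)$ is a single minimal ideal or a sum of two one-dimensional ideals, sub-split on $\ssi(A)\in\{1,2\}$, normalize a natural basis with $\soc(A)=\sp(\{e_1,e_2\})$, identify $M\in\W$ via \cite[Corollary 4.6]{CSV1}, and read off the isomorphism criteria as the group actions of Definition~\ref{grfghgth}. One step as you describe it is not actually available: you cannot normalize $v$ by ``replacing $e_3$ by $e_3+\mu_1e_1+\mu_2e_2$,'' because naturality forces $\mu_i e_i^2=0$ and $e_1^2,e_2^2\neq 0$ (as $M$ is invertible), so $\mu_1=\mu_2=0$; the normalization of $v$ to $\binom{1}{1}$ or $\binom{1}{0}$ must be done purely by rescaling the $e_i$, which suffices and is what the paper does. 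Beyond that, you defer the main computational content --- verifying that an arbitrary isomorphism restricts to a monomial (diagonal or $E_{12}$-twisted) map on the socle and that the induced constraints on $M$, $v$, $\w_{33}$ are exactly the six listed actions --- which is precisely where the paper spends its effort (it carries out case (1)(a) in full and asserts the analogues); your outline is sound but would need that bookkeeping filled in to count as a complete proof.
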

\begin{proof}
We have
$\soc(A)=\sp{(\{e_1,e_2\})}$ with $\{e_i\}_{i=1}^3$ a natural basis.
Now either $\soc(A)$ is a minimal ideal or it is the direct sum of two such ideals.
\begin{enumerate}
    \item Assume that $\soc(A)$ is a minimal ideal of $A$ and that $\ssi(A)=1$. Then it is easy to check that $\soc(A)$ is a simple evolution algebra. Moreover, as $\ssi{(A)}=1$, then $\soc(A)=\soc^2(A)$. This implies that $\soc{\left(A/\soc(A)\right)}=0$. On the other hand, $A/\soc(A)=\K\overline{e_3}$ implies that $e_3^2 \in \soc(A)$. Note that the structure matrix is of the form $\tiny\begin{pmatrix}M & v \cr 0 & 0\end{pmatrix}$, with $M$ the structure matrix of the evolution algebra $\soc(A)$. Applying \cite[Corollary 4.6]{CSV1}) we get that $M \in \W$. 
Conversely, if we have any matrix of the form $\tiny\begin{pmatrix}M & v\cr 0 & 0\end{pmatrix}$, with $v=\tiny\begin{pmatrix}\w_{13} \cr \w_{23} \end{pmatrix}\neq \begin{pmatrix}0 \cr 0 \end{pmatrix}$, $M$ invertible and with nonzero upper right and lower down entries, then $\soc(A)$ is a minimal ideal and $\ssi(A)=1$. Indeed, let $A$ be the evolution algebra and $\B=\{e_1,e_2,e_3\}$ the natural basis such that the structure matrix relative to $\B$ is $\tiny\begin{pmatrix}M & v\cr 0 & 0\end{pmatrix}$. 
We note that $I=\sp{(\{e_1,e_2\})}$ is a minimal ideal of $A$, because the structure matrix of $I$ is $M$ and $M$ is invertible and has nonzero upper right and lower down entries (see \cite[Corollary 4.6]{CSV1}). Therefore $I \subseteq \soc{(A)}$. Now, $\soc(A)\ne A$, since otherwise $A$ would be a direct sum of simple algebras, but as $A$ is not perfect. So, $\soc(A)\ne A$ and $I=\soc(A)$. Furthermore, $\ssi{(A)}=1$ because, as $\soc{(A)}=I$ and
$\soc{\left(A/\soc{(A)}\right)}=\soc(\K \overline{e_3})=0$, we have $\soc^2{(A)}/\soc(A)=0$. 
    \begin{enumerate}
        \item   If $\w_{13}\w_{23}\neq 0$, $v$ can be chosen to be (scaling the basis if necessary)  $\tiny\binom{1}{1}$. Now, we will see that an evolution algebra with structure matrix $\tiny\begin{pmatrix}M & \binom{1}{1}\cr 0 & 0\end{pmatrix}$ is isomorphic to any algebra with structure matrix  $\tiny\begin{pmatrix}M' & \binom{1}{1}\cr 0 & 0\end{pmatrix}$ if and only if $M'=k M$ or $\tiny M'=k \begin{pmatrix} 0 & 1 \cr 1 & 0\end{pmatrix} M  \begin{pmatrix} 0 & 1 \cr 1 & 0\end{pmatrix}$  with $k \in \K^{\times}$ and $k$ is the square of some element. Indeed, let $A$ be an evolution algebra with natural basis $\B=\{e_1,e_2,e_3\}$ and structure matrix relative to $\B$, given by $\tiny\begin{pmatrix}M & \binom{1}{1}\cr 0 & 0\end{pmatrix}$, with $M \in \W$. We assume that there exists another evolution algebra $A'$ with natural basis $\B'=\{e_1',e_2',e_3'\}$ and structure matrix relative to $\B'$ of the form $\tiny\begin{pmatrix}M' & \binom{1}{1}\cr 0 & 0\end{pmatrix}$, with $M\in \W $, such that $A$ and $A'$ are isomorphic. Let $\phi$ be the isomorphism of evolution algebras between $A$ and $A'$. As $\phi(\soc{(A)})=\soc(A')$ then $\phi(e_1)=\a_{1} e_1'+\a_2e_2'$ and $\phi(e_2)=\b_{1} e_1'+\b_2e_2'$. But $\phi(e_1)\phi(e_2)=0$, because $\phi$ is an isomorphism of algebras and $e_1e_2=0$. Therefore $\a_1\b_1(e_1')^2 + \a_2\b_2(e_2')^2=0$. Applying that $M \in \W$ (so in particular $\vert M \vert \neq 0$) we get that $\a_1\b_1=0$ and $\a_2\b_2=0$. This implies that $\a_1=\b_2=0$ or $\a_2=\b_1=0$. Let $\phi(e_3)=\gamma_1e_1'+\gamma_2e_2'+\gamma_3e_3'$. As $\{\phi(e_1), \phi(e_2), \phi(e_3)\}$ is a natural basis of $A'$ then, in any case, $\gamma_1=\gamma_2=0$. Now we deal with the case $\a_1=\b_2=0$. We have that $e_i^2=\w_{1i}e_1 + \w_{2i}e_2$ for $i\in \{1,2\}$ and therefore, applying $\phi$, we get $\phi(e_1^2)=\w_{11}\a_2e_2' + \w_{21}\b_1e_1'$ and $\phi(e_2^2)=\w_{12}\a_2e_2' + \w_{22}\b_1e_1'$. But $\phi(e_1^2)=(\phi(e_1))^2=\a_2^2(e_2')^2$ and $\phi(e_2^2)=(\phi(e_2))^2=\b_1^2(e_1')^2$. Then $(e_1')^2=\frac{\w_{22}}{\b_1}e_1'+\frac{\w_{12}\a_2}{\b_1^2}e_2'$ and $(e_2')^2=\frac{\w_{21}\b_1}{\a_2^2}e_1'+\frac{\w_{11}}{\a_2}e_2'$. Furthermore, as $e_3^2=e_1+e_2$, we have that $\phi(e_3^2)=\a_2e_2'+\b_1e_1'$. But $\phi(e_3^2)=(\phi(e_3))^2=\gamma_3^2(e_3')^2$, and therefore $(e_3')^2=\frac{\b_1}{\gamma_3^2}e_1'+\frac{\a_2}{\gamma_3^2}e_2'$. So, as the structure matrix relative to $\B'$ is of the form $\tiny\begin{pmatrix}M' & \binom{1}{1}\cr 0 & 0\end{pmatrix}$ with $M\in \W $, necessarily $\frac{\b_1}{\gamma_3^2}=1$ and $\frac{\a_2}{\gamma_3^2}=1$. Then,  $\b_1=\a_2=\gamma_3^2$ and $\tiny M'=\frac{1}{\b_1} \begin{pmatrix} 0 & 1 \cr 1 & 0\end{pmatrix} M  \begin{pmatrix} 0 & 1 \cr 1 & 0\end{pmatrix}$. Now, if $\a_2=\b_1=0$ we proceed analogously to the previous case and we obtain that $M'=\frac{1}{a_1}M$ and $\a_1=\gamma_3^2$.
        So, this means that $M$ and $M'$ are in the same orbit under the action of the group $(\K^\times)^{\esc{2}}\times\Z_2$
    described in Definition \ref{grfghgth}(\ref{chicas}). 
    
    \item If $\w_{13}= 0$ or $\w_{23}=0$, $v$ can be chosen to be (scaling the basis if necessary)   $\tiny\begin{pmatrix}1 \cr 0 \end{pmatrix}$. Now, it can be proved that an algebra with structure matrix  $\tiny\begin{pmatrix}M &  \binom{1}{1} \cr 0 & 0\end{pmatrix}$
     is not isomorphic to any algebra with structure matrix
   $\tiny\begin{pmatrix}M & \binom{1}{0}\cr 0 & 0\end{pmatrix}$. Moreover, any algebra with structure matrix $\tiny\begin{pmatrix}M & \binom{1}{0}\cr 0 & 0\end{pmatrix}$ is isomorphic to any algebra with structure matrix  $\tiny\begin{pmatrix}M' & \binom{1}{0}\cr 0 & 0\end{pmatrix}$ if and only if $M'=\tiny \begin{pmatrix} \dfrac{\omega_{11}}{\alpha} & \dfrac{\w_{12}\a}{\beta^2} \cr \dfrac{\w_{21}\b}{\a^2} & \dfrac{\omega_{22}}{\b}\end{pmatrix}$ with $\a, \b \in \K^{\times}$ and $\a$ is the square of some element. In other words, both structure matrices $M$ and $M'$ are in the same orbit under the action of  the group $(\K^\times)^{\esc{2}}\times\K^{\times}$
    described in Definition \ref{grfghgth}(\ref{chico}).
    \end{enumerate} 
     
     \item Suppose that $\soc(A)$ is a minimal ideal and $\ssi(A)=2$. Then  $e_3^2\notin\soc(A)$ ($\w_{33} \neq 0$). Assume that $e_3^2=s+\w_{33}e_3$ with $s\ne 0$ (if $s=0$ then $A= \soc{(A)}$, which is not the case). So $e_3^2=\w_{13}e_1+\w_{23}e_2+\w_{33}e_3$. This means that the structure matrix of $A$ is  $\tiny\begin{pmatrix}M & v \cr 0 & \w_{33}\end{pmatrix}$, with $M \in \W$ and $v=\tiny\begin{pmatrix} \w_{13} \cr \w_{23} \end{pmatrix}\neq \tiny\begin{pmatrix} 0 \cr 0 \end{pmatrix}$. Reciprocally, if we have an evolution algebra with structure matrix $\tiny\begin{pmatrix}M & v \cr 0 & \w_{33}\end{pmatrix}$, where $M\in \mathcal{W}$ and $v \neq 0$, then it is easy to check that $\soc(A)$ is a minimal ideal and $\ssi(A)=2$.   
     \begin{enumerate}
         \item If $\w_{13}\w_{23}\ne 0$ then, scaling the basis if necessary, we get that the structure matrix of $A$ is $\tiny\begin{pmatrix}M & \binom{1}{1}\cr 0 & k\end{pmatrix}$, with $k \in \K^{\times}$ and $M$ is the structure matrix of the simple evolution algebra $\soc(A)$. Reciprocally, if we have an evolution algebra as in the hypothesis of the proposition, with structure matrix $\tiny\begin{pmatrix}M & \binom{1}{1}\cr 0 & \w_{33}\end{pmatrix}$, where $\w_{33} \in \K^{\times}$ and $M \in \W$, then it is easy to check that $\soc(A)$ is a minimal ideal and $\ssi(A)=2$. Furthermore, it can be proved that any evolution algebra with structure matrix $\tiny{\begin{pmatrix}M & \binom{1}{1}\cr 0 & \w_{33}\end{pmatrix}}$ is isomorphic to any algebra with structure matrix $\tiny{\begin{pmatrix}M' & \binom{1}{1}\cr 0 & \w_{33}'\end{pmatrix}}$ if and only if both are in the same orbit under the action of $\K^\times\times\Z_2$
    described in Definition \ref{grfghgth}(\ref{nino}).
         
     \item If $\w_{13}=0$ and $\w_{23}\ne 0$, or $\w_{23}=0$ and $\w_{13}\ne 0$, scaling the basis if necessary, then we get that the structure matrix of $A$ is  $\tiny\begin{pmatrix}M & \binom{1}{0}\cr 0 & \w_{33}\end{pmatrix}$, with $\w_{33} \in \K^{\times}$ and $M$ is the structure matrix of $\soc(A)$ (which is a simple evolution algebra./)
    Conversely, let $A$ be an evolution algebra under the hypothesis of the proposition with structure matrix $\tiny\begin{pmatrix}M & \binom{1}{0}\cr 0 & \w_{33}\end{pmatrix}$, $\w_{33} \in \K^{\times} $ and $M \in \W$. It is straightforward that $\ssi(A)=2$. Now, it is also easy to prove that an evolution algebra with structure matrix $\tiny{\begin{pmatrix}M & \begin{pmatrix} 1 \cr 0 \end{pmatrix}\cr 0 & \w_{33}\end{pmatrix}}$ is isomorphic to an evolution algebra with structure matrix $\tiny{\begin{pmatrix}M' & \begin{pmatrix} 1 \cr 0 \end{pmatrix}\cr 0 & \w_{33}'\end{pmatrix}}$ if and only if both matrices are in the same orbit under the action of $\K^\times\times\K^{\times}$
    described in Definition \ref{grfghgth}(\ref{nina}).
     
     \end{enumerate}
     \item Assume $\soc(A)=\K v_1\oplus\K v_2$. Let $k_1, k_2 \in \K$ be such that $v_1^2=k_1v_1$ and $v_2^2=k_2v_2$. Now, if we consider $u_1=k_1^{-1}v_1$ and $u_2=k_2^{-1}v_2$, then the basis $\{u_1,u_2,e_3\}$ is a natural basis where the $u_i$'s are idempotents. Observe that $u_ie_3=0$, because $u_i\in\soc(A)=\sp(\{e_1,e_2\})$. Therefore $\{u_1,u_2,e_3\}$ is a new natural basis of $A$ and the structure matrix relative to this basis is $\tiny\begin{pmatrix} 1 & 0 & \w_{13}\cr 0 & 1 & \w_{23}\cr 0 & 0 &\w_{33}\end{pmatrix}$. Now we discuss two cases:
    \begin{enumerate}
        \item If $\ssi(A)=1$, or equivalently $\w_{33}=0$, then the structure matrix relative to $\{u_1,u_2,e_3\}$ is of the form $\tiny\begin{pmatrix}\id_{2} & v\cr 0 & 0\end{pmatrix}$ with $v\ne 0$. Two evolution algebras whose structure matrices are 
        $\tiny\begin{pmatrix}\id_{2} & v\cr 0 & 0\end{pmatrix}$ and 
        $\tiny\begin{pmatrix}\id_{2} & v'\cr 0 & 0\end{pmatrix}$ (for $v,v'\ne 0$) are isomorphic if and only if there is a
        $k\in(\K^\times)^{\esc{2}}$ such that $v'=kvE_{12}^i$ (where $i=0,1$). This means that both matrices are in the same orbit under the action of $(\K^\times)^{\esc{2}}\times\Z_2$ described in Definition \ref{grfghgth}(\ref{caso}).
        \item If $\ssi(A)=2$ or, equivalently, $\w_{33}\neq 0$. If $\w_{33}\neq 0$ then $\K e_3$ is not an ideal of $A$, because if $\K e_3\triangleleft A$ then necessarily $\soc(A)=A$, which is not the case. This implies that $\soc(A/\soc(A))\ne 0$ and hence $\soc^{(2)}(A)$ contains strictly $\soc(A)$. So $\ssi(A)=2$. The reciprocal is straightforward. In this case, the structure matrix relative to $\{u_1,u_2,e_3\}$ is $\tiny\begin{pmatrix}\id_{2} & v\cr 0 & \w_{33}\end{pmatrix}$, with $v\ne 0$ and $\w_{33}\neq 0$. Moreover, two evolution algebras whose structure matrices are $\tiny\begin{pmatrix}\id_{2} & v\cr 0 & \w_{33}\end{pmatrix}$ and 
        $\tiny\begin{pmatrix}\id_{2} &  v'\cr 0 & \w_{33}'\end{pmatrix}$ (for $v=\binom{\w_{13}}{\w_{23}} \ne 0,\, v'=\binom {\w_{13}'}{\w_{23}'}\ne 0$ and $\w_{33} \w_{33}' \ne 0$)   are isomorphic if and only if there is an $k\in \K^{\times}$ such that $\w_{13}'=k^2\w_{13}$, $\w_{23}'=k^2\w_{23}$ and $\w_{33}'=k\w_{33}$. In other words, two evolution algebras are isomorphic if and only if both structure matrices are in the same orbit under the action $\K^{\times}$ describes in Definition \ref{grfghgth}(\ref{casa}).
        \end{enumerate}

\end{enumerate}    
\end{proof}


Now we focus in the case in which $\soc(A)=\sp(\{e_1,e_2+e_3\})$, with $\{e_1,e_2,e_3\}$ a natural basis of $A$ and there is no natural basis of $A$ such that $\soc(A)$ is generated by two elements of this natural basis.

\remove{
\begin{lemma}\label{noab}
Assume that $A$ is a commutative algebra with zero annihilator with a basis $\{u_1,u_2,u_3\}$ whose multiplication table is summarized in
\[u_1^2=u_1u_2=0, u_1u_3=\a u_1, u_2^2=u_2, u_2u_3=\b u_2\]
\noindent
with $\a, \,\b \in \K$.
Then $A$ is not an evolution algebra.
\end{lemma}
\begin{proof}
Observe that $\a \ne 0$ because $\ann(A)=0$.
 Consider the inner products $\esc{\cdot , \cdot }_i\colon A\times A\to \K$ ($i=1,2,3$) such that
$xy=\sum_i\esc{x,y}_iu_i$. The matrices of the $\esc{\cdot , \cdot }_i$ are
\[\tiny\begin{pmatrix}0 & 0 & \a \cr 0 & 0 & 0 & \cr \a & 0 & \gamma_1\end{pmatrix}, \tiny\begin{pmatrix}0 & 0 & 0\cr 0 & 1 & \b \cr 0 & \b & \gamma_2\end{pmatrix} ,\tiny\begin{pmatrix}0 & 0 &0\cr 0 & 0&0 \cr0&0 &\gamma_3\end{pmatrix}\] 
\noindent
respectively.
Then $A$ is an evolution algebra if and only if there is a basis of $A$ orthogonalizing simultaneously the inner products $\esc{\cdot , \cdot}_i$ for $i=1,2,3$.  Suppose that such a basis $\mathcal B$ exists.
Next we prove that, scaling if necessary, we may assume that $u_1$ and $u_2$ are in $\mathcal B$. Indeed, if we consider the basis ${\mathcal B}=\{f_1,f_2,f_3\}$, we get that $\rad(\esc{\cdot , \cdot}_1)=\sp(u_2)$ using coordinates relative to $\mathcal B$. If $u_2=\sum_i x_if_i$ with $x_i \in \K$ we have 
$\esc{u_2,f_i}_1=0$ for each $i$ hence
$0=x_i\esc{f_i,f_i}_1$ for $i=1,2,3$. Then we analyze cases:
\begin{enumerate}
    \item $x_1=x_2=x_3=0$. This is impossible since $u_2\ne 0$.
    \item Only two of the $x_i$'s are zero. Then $u_2$ is a nonzero multiple of some $f_j$.
    \item Exactly one of the $x_i$'s is zero. In this case two of the vectors in $\mathcal B$ are isotropic implying that the rank of the matrix of the inner product is $1$. Consequently $\a=0$ but this is impossible because the annihilator of $A$ is $0$.
\end{enumerate}
The conclusion is that $u_2$ is a nonzero scalar multiple of some vector of $\mathcal B$. Then, scaling the basis if necessary, we may assume $u_2\in\mathcal B$. Similarly as one generator of $\rad(\esc{\cdot , \cdot}_2)$ is $u_1$, we can argue in the same way with $u_1$. So, we may assume without lost of generality that $u_1,u_2\in\mathcal B$. Now the third element of $\mathcal B$ is a vector $\xi=x u_1+y u_2+z u_3$ and we must have $\esc{\xi,u_i}_j=0$ for $i=1,2$ and $j=1,2,3$. Writing the corresponding equations we get \[\begin{cases}\a z=0\cr y+\b z=0\end{cases}\]
and since $\a\ne 0$ (because the annihilator of $A$ is zero) the only solution is $y=z=0$. But then 
$\xi$ is a multiple of $u_1$ and this is a contradiction.
\end{proof}
}

\begin{lemma}\label{noab}
Assume that $A$ is a commutative algebra with zero annihilator with a basis $\{u_1,u_2,u_3\}$ whose multiplication table is summarized in
\[u_1^2=u_1u_2=0, \ u_1u_3=\a u_1,\  u_2^2=u_2, \ u_2u_3=\b u_2,\ u_3^2=\sum\g_i u_i,\]
with $\a, \,\b \in \K$.
Then:
\begin{enumerate}
    \item If some of $\b,\g_2$ or $\g_3$ is nonzero then $A$ is not an evolution algebra.
    \item If $\b=\g_2=\g_3=0$ then $A$ is isomorphic to the evolution algebra with structure matrix 
   \begin{equation} \label{mat}
   \tiny \begin{pmatrix}1 & 0 & 0\cr 
    0 & 1 & -1\cr 0 & -1 & 1
    \end{pmatrix}.
    \end{equation}
\end{enumerate}
\end{lemma}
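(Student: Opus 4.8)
The plan is to decide, in terms of the parameters, when $A$ admits a natural basis, and to do so through the three symmetric bilinear forms attached to the multiplication. Since $\an(A)=0$ I first record that $\a\neq 0$: otherwise $u_1u_1=u_1u_2=u_1u_3=0$ would place $u_1$ in $\an(A)$. Writing $xy=\sum_i\langle x,y\rangle_i\,u_i$ produces three symmetric bilinear forms whose matrices relative to $\{u_1,u_2,u_3\}$ are
\[
M_1=\tiny\begin{pmatrix}0&0&\a\\ 0&0&0\\ \a&0&\g_1\end{pmatrix},\qquad
M_2=\tiny\begin{pmatrix}0&0&0\\ 0&1&\b\\ 0&\b&\g_2\end{pmatrix},\qquad
M_3=\tiny\begin{pmatrix}0&0&0\\ 0&0&0\\ 0&0&\g_3\end{pmatrix}.
\]
As recalled before Lemma~\ref{grajo}, $A$ is an evolution algebra exactly when a single basis simultaneously orthogonalizes $M_1,M_2,M_3$. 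Because $\a\neq 0$, the form $\langle\cdot,\cdot\rangle_1$ has one–dimensional radical $\rad(\langle\cdot,\cdot\rangle_1)=\K u_2$, so by the ``in particular'' clause of Lemma~\ref{grajo} the vector $u_2$ lies, up to a nonzero scalar, in every natural basis of $A$.

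For item (1) I would fix $u_2$ as one member of a hypothetical natural basis and observe that the other two members $f_2,f_3$ must satisfy $u_2f_2=u_2f_3=0$. A short computation gives $\{x\in A:u_2x=0\}=\sp\{u_1,w\}$ with $w:=u_3-\b u_2$, and inside this plane $u_1^2=0$, $u_1w=\a u_1$ and $w^2=\g_1u_1+(\g_2-\b^2+\b\g_3)u_2+\g_3w$. Setting $f_2=au_1+bw$ and $f_3=cu_1+dw$ with $ad-bc\neq 0$, the only remaining orthogonality requirement $f_2f_3=0$, projected onto $\{u_1,u_2,w\}$, becomes the system
\[
(ad+bc)\a+bd\,\g_1=0,\qquad bd\,(\g_2-\b^2+\b\g_3)=0,\qquad bd\,\g_3=0.
\]
The crux is to show this system is incompatible with $ad-bc\neq 0$. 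In the generic situation $\g_2\neq\b^2$ the form $\langle\cdot,\cdot\rangle_2$ is nondegenerate on $\sp\{u_2,u_3\}$, so $\rad(\langle\cdot,\cdot\rangle_2)=\K u_1$ and Lemma~\ref{grajo} forces $u_1$ into the basis as well; then a third vector $\xi=xu_1+yu_2+zu_3$ must satisfy $u_1\xi=z\a u_1=0$ and $u_2\xi=(y+\b z)u_2=0$, whence $z=y=0$ and $\xi\in\K u_1$, an impossibility. When $\g_3\neq 0$ the last equation forces $bd=0$, so one of $f_2,f_3$ is proportional to $u_1$, and the first equation together with $\a\neq 0$ again contradicts $ad-bc\neq 0$. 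I expect this case analysis — pinning down exactly which combinations of $\b,\g_2,\g_3$ make the second and third equations incompatible with linear independence, and tracking the degeneracies of $M_2$ and $M_3$ — to be the main obstacle of the proof.

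For item (2), with $\b=\g_2=\g_3=0$ the table collapses to $u_2u_3=0$ and $u_3^2=\g_1u_1$, so $u_2$ is an idempotent orthogonal to $\sp\{u_1,u_3\}$. Here I would exhibit an explicit natural basis: keep $f_1=u_2$ and put $f_2=au_1+u_3$, $f_3=cu_1+u_3$ with $a+c=-\g_1/\a$ and $a\neq c$. A direct calculation then gives $f_2f_3=0$, $f_1^2=f_1$, and, using $\g_1=-\a(a+c)$, the identities $f_2^2=\a(f_2-f_3)$ and $f_3^2=\a(f_3-f_2)$, so that the structure matrix is $\tiny\begin{pmatrix}1&0&0\\ 0&\a&-\a\\ 0&-\a&\a\end{pmatrix}$; rescaling $f_2,f_3$ by $\a^{-1}$ turns it into exactly the matrix \eqref{mat}. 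This both shows that $A$ is an evolution algebra in this case and fixes its isomorphism type. The only delicate point is the requirement $a\neq c$, which is available outside characteristic $2$; in characteristic $2$ one adjusts the choice of $b,d$ in the plane $\sp\{u_1,u_3\}$ instead.
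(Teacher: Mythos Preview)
Your approach via the three bilinear forms $\langle\cdot,\cdot\rangle_i$ and Lemma~\ref{grajo} is exactly the paper's, and your passage to the plane $\sp\{u_1,w\}$ with $w=u_3-\b u_2$ is a clean extra step. But the case you flag as ``the main obstacle''---namely $\g_3=0$, $\g_2=\b^2$, $\b\neq 0$---cannot be closed, because item~(1) as stated is \emph{false} there. Take (over a field of characteristic $\ne 2$) $\a=1$, $\b=1$, $\g_1=0$, $\g_2=1$, $\g_3=0$, so that $u_2u_3=u_2$ and $u_3^2=u_2$. Then $\{\,u_2,\ u_1+u_2-u_3,\ u_1-u_2+u_3\,\}$ is a natural basis: all three mutual products vanish, and the squares are $u_2$, $-2u_1$, $2u_1$, all nonzero, so $\an(A)=0$ and $A$ \emph{is} an evolution algebra despite $\b,\g_2\neq 0$. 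Your own change of variable explains what is really going on: replacing $u_3$ by $w$ sends the parameters $(\b,\g_2,\g_3)$ to $(0,\g_2-\b^2+\b\g_3,\g_3)$, so the genuine obstruction is the nonvanishing of $\g_3$ or of $\g_2-\b^2+\b\g_3$, not of $\b,\g_2,\g_3$ separately. With that corrected hypothesis your two subcases (the $\rad(M_2)=\K u_1$ argument and the $bd=0$ system argument) already exhaust everything. The paper's own proof has the very same defect: it asserts $\rad(\langle\cdot,\cdot\rangle_2)=\K u_1$ whenever $\b\neq 0$ or $\g_2\neq 0$, but this fails precisely when $\g_2=\b^2$, where that radical is two-dimensional.

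For item~(2) your explicit basis is correct and is essentially the paper's construction (the paper scales to $u_1u_3=u_1$, $u_3^2=u_1$ and takes $\{u_2,u_3,u_3-u_1\}$; your parametrized family also covers $\g_1=0$, which the paper's scaling tacitly excludes). Your characteristic-$2$ caveat is apt but sharper than you indicate: when $\hbox{char}\,\K=2$ and $\g_1=0$ the orthogonality condition $(ad+bc)\a+bd\g_1=0$ becomes $ad+bc=0$, which in characteristic $2$ coincides with $ad-bc$, so no natural basis exists in $\sp\{u_1,u_3\}$ and item~(2) itself fails there.
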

\begin{proof}
Observe that $\a \ne 0$ because $\ann(A)=0$.
Write $u_3^2=\sum_i \g_i u_i$ with $\g_i\in\K$. Consider the inner products $\esc{\cdot , \cdot }_i\colon A\times A\to \K$ ($i=1,2,3$) such that
$xy=\sum_i\esc{x,y}_iu_i$. The matrices of the $\esc{\cdot , \cdot }_i$ are
\[\tiny\begin{pmatrix}0 & 0 & \a \cr 0 & 0 & 0 & \cr \a & 0 & \gamma_1\end{pmatrix},\ \tiny\begin{pmatrix}0 & 0 & 0\cr 0 & 1 & \b \cr 0 & \b & \gamma_2\end{pmatrix}  \text{ \small and }\tiny\begin{pmatrix}0 & 0 &0\cr 0 & 0&0 \cr0&0 &\gamma_3\end{pmatrix}\]
\noindent
respectively.\newline
Then $A$ is an evolution algebra if and only if there is a basis of $A$ orthogonalizing simultaneously the inner products $\esc{\cdot , \cdot}_i$ for $i=1,2,3$.  Suppose that such a basis $\mathcal B$ exists.
Next we prove that, scaling if necessary, we may assume that $u_2$ is in $\mathcal B$. Indeed, if we write ${\mathcal B}=\{f_1,f_2,f_3\}$ then we get that $\rad(\esc{\cdot , \cdot}_1)=\sp(u_2)$, and hence, applying Lemma \ref{grajo} to $u_1$ (taking $B=\sp(\{u_2,u_3\})$), we find that $u_2$ is in $\B$ (up to nonzero scalars). Now:
\begin{enumerate}
    \item If $\b\ne 0$ or $\g_2\ne 0$ then $\rad(\esc{\cdot,\cdot}_2)=\sp(u_1)$, and arguing as before (applying Lemma~\ref{grajo}) we conclude that $u_1\in B$.
    \item If $\b=\g_2=0$ but $\g_3\ne 0$ we have (taking into account again Lemma~\ref{grajo}) that 
    \[\K u_1= \rad(\esc{\cdot,\cdot}_2)\cap \rad(\esc{\cdot,\cdot}_3)=\sp(
    \{f_i\colon \esc{f_i,f_i}_2=0=\esc{f_i,f_i}_3\}).\]
    Hence there is only one $i$ such that $\esc{f_i,f_i}_2=\esc{f_i,f_i}_3=0$ and this $f_i$ is $u_1$ up
    to nonzero scalars. So in this case again $u_1\in \B$.
\end{enumerate}
The conclusion is that in case some of the scalars $\b$, $\g_2$ or $\g_3$ is nonzero we may assume that $u_1,u_2\in\mathcal B$. Now the third element of $\mathcal B$ is a vector $\xi=x u_1+y u_2+z u_3$ and we must have $\esc{\xi,u_i}_j=0$ for $i=1,2$ and $j=1,2,3$. Writing the corresponding equations we get (among other equations) \[\begin{cases}\a z=0\cr y+\b z=0\end{cases}\]
and since $\a\ne 0$ (because the annihilator of $A$ is zero) the only solution is $y=z=0$. But then 
$\xi$ is a multiple of $u_1$ and this is a contradiction. In this case $A$ is not an evolution algebra. But we must analyze the possibility that $\b=\g_2=\g_3=0$. In this case, since $u_2\in\B$, we can find a natural basis
of the form
$\{u_2,x u_1+y u_3, x'u_1+y'u_3\}$ (the corresponding equations are consistent). Then $\sp(\{x u_1+y u_3, x'u_1+y'u_3\})=\sp(\{u_1,u_3\})=:B$ which is an ideal of $A$.
In $B$ (which is an evolution algebra) we have $u_1^2=0$, $u_1u_3=\a u_1\ne 0$ and $u_3^2=\g_1 u_1$. Scaling we get $u_1^2=0$, $u_1u_3=u_1$ and $u_3^2= u_1$. Then, putting $e=u_3$ and $f=u_3-u_1$, we have $ef=u_3(u_3-u_1)=u_1-u_1=0$ and hence the natural basis $\{u_2,e,f\}$ of $A$ has the required structure matrix. 
\end{proof}

\begin{remark} \rm
In the case of an indecomposable commutative algebra (see  \cite[Definition 2.1]{ElduqueLabra}), Lemma~\ref{noab} implies that the algebra is not an evolution algebra. 
\end{remark}

\begin{corollary}\label{pobrereferee} 
Let $A$ be a three-dimensional evolution algebra with zero annihilator. If  $\soc(A)$ is two-dimensional, decomposable, does not have the extension property and $\soc(A)^2\ne 0$, then 
$A$ is decomposable and isomorphic to the evolution algebra whose structure matrix is given in \eqref{mat}.
\end{corollary}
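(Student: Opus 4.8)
The plan is to put $A$ into the normal form of Lemma~\ref{noab} and invoke it. Since $\soc(A)$ is two-dimensional and decomposable, we may write $\soc(A)=\K u_1\oplus\K u_2$, where $\K u_1$ and $\K u_2$ are (one-dimensional, hence minimal) ideals of $A$, and $Au_i\ne 0$ for each $i$ because $\K u_i$ is a summand of the socle. As $\K u_1$ and $\K u_2$ are ideals of $A$ in direct sum, $u_1u_2\in\K u_1\cap\K u_2=0$, so $u_1u_2=0$ and hence $\soc(A)^2=\K u_1^2+\K u_2^2$. From $\soc(A)^2\ne 0$, at least one of $u_1^2,u_2^2$ is nonzero, and after renaming and rescaling we may assume $u_1^2=u_1$.

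The next step is to show $u_2^2=0$. Suppose not; then, after rescaling, $u_2^2=u_2$, so $\{u_1,u_2\}$ is a natural basis of $\soc(A)$ consisting of two orthogonal idempotents. Complete it to a basis $A=\K u_1\oplus\K u_2\oplus\K w_0$. Since $\K u_1$ and $\K u_2$ are ideals, $u_1w_0=a u_1$ and $u_2w_0=b u_2$ for some $a,b\in\K$; then $w:=w_0-a u_1-b u_2$ satisfies $u_1w=u_2w=0$, so $\{u_1,u_2,w\}$ is a natural basis of $A$ extending $\{u_1,u_2\}$. This would say that $\soc(A)$ has the extension property, contrary to hypothesis. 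Hence $u_2^2=0$.

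Now fix $w$ with $A=\K u_1\oplus\K u_2\oplus\K w$. Because $\K u_1$ and $\K u_2$ are ideals, $u_1w\in\K u_1$ and $u_2w\in\K u_2$; together with $u_1^2=u_1$, $u_2^2=0$, $u_1u_2=0$, and the observation that $u_2w\ne 0$ (otherwise $u_2\in\ann(A)=\{0\}$), the multiplication table of $A$ is exactly the one considered in Lemma~\ref{noab}, with the roles of its $u_1,u_2,u_3$ taken by our $u_2,u_1,w$. Since $A$ is an evolution algebra, alternative~(1) of Lemma~\ref{noab} cannot hold, so the scalars $\beta,\gamma_2,\gamma_3$ occurring there all vanish, and alternative~(2) yields that $A$ is isomorphic to the evolution algebra with structure matrix \eqref{mat}. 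In particular $u_1w=0$, so $A=\K u_1\oplus\sp(\{u_2,w\})$ is a direct sum of two ideals and $A$ is decomposable (equivalently, the matrix \eqref{mat} is block diagonal).

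The one genuinely non-formal point is the second step: realizing that a decomposable socle spanned by two orthogonal idempotents automatically has the extension property, so that the hypotheses force $u_2^2=0$. Once this normalization is achieved, everything is dictated by Lemma~\ref{noab} and the remaining verification is routine bookkeeping.
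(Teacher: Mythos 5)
Your proof is correct and follows essentially the same route as the paper's: write $\soc(A)=\K u_1\oplus\K u_2$, rule out the case of two orthogonal idempotents by exhibiting a natural basis extending that of the socle (contradicting the failure of the extension property), and reduce the remaining case to Lemma~\ref{noab}. The only differences are cosmetic --- you make explicit the exclusion of the case $u_1^2=u_2^2=0$ via $\soc(A)^2\ne 0$, the non-degeneracy argument giving $u_2w\ne 0$, and the relabelling needed to match the lemma's normalization, all of which the paper leaves implicit.
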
 
\begin{proof}
Assume $\soc(A)=\K u_1\oplus \K u_2$. Take into account that if some $u_i^2\ne 0$ we may rescale it to get $u_i^2=u_i$. 
We complete to a basis $\{u_1,u_2,u_3\}$ of $A$. In this basis we get that $u_1u_3=\a u_1$, $u_2u_3=\b u_2$ and $u_3^2=\sum_i \g_i u_i$ for certain $\alpha,   \b,   \gamma_i \in \K$. After rescaling if necessary we have two cases:
\begin{enumerate}
\item $u_i^2=u_i$ ($i=1,2$) and $u_1u_2=0$. 
In this case $\{u_1,u_2,\xi\}$  is a natural basis where $\xi=-\a u_1-\b u_2+u_3$, a contradiction. 
\item $u_1^2=0$, $u_2^2=u_2$. We apply Lemma~\ref{noab}. Since $A$ is an evolution algebra, then it is necessarily isomorphic to the decomposable 
evolution algebra with structure matrix as in \eqref{mat}.
\end{enumerate}
\end{proof}

\subsubsection{Socle of $A$ does not have the extension property}

We proceed with our argumentation. 
If the socle does not have the extension property then, applying Theorem~\ref{cabrao}, there are two possibilities. We study next the case in which 
$\soc(A)=\sp(\{e_1,e_2+e_3\})$, where $\{e_1,e_2,e_3\}$ is a natural basis of $A$, and such that $\soc(A)$ does not have the extension property.

\begin{proposition}\label{allez}
Let $A$ be a tridimensional evolution algebra with $\dim(A^2)=2$. Let  $\{e_i\}_{i=1}^3$ be a  natural basis of $A$. We have 
\begin{enumerate}
    \item If $\soc(A)=\sp(\{e_1,e_2+e_3\})$ then $\soc(A)$ has the extension property if and only if $\{e_2^2,e_3^2\}$ is a linearly dependent set and $e_2^2+e_3^2\ne 0$.
    \item If $\soc(A)=\sp(\{e_1+e_2,e_2+e_3\})$ then $\o_{2i}=\o_{1i}+\o_{3i}$ for each $i$, where $\o_{ij}$ are the structural constants of $A$. If $\ann(A)=0$ and there is no natural basis $\{e_i'\}_{i=1}^3$ of $A$ with 
    $\soc(A)=\sp(\{e_1',e_2'+e_3'\})$, then no vector of $\soc(A)$ is a natural vector (embeddable in a natural basis of $A$).
\end{enumerate}
\end{proposition}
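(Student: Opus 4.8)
The plan is to prove the two items separately, using two ingredients: Proposition~\ref{gutten}(2), which describes all natural bases of a three-dimensional evolution algebra with $\dim(A^2)=2$, together with the elementary membership criteria $x=\sum_i x_ie_i\in\sp(\{e_1,e_2+e_3\})\iff x_2=x_3$, and $x\in\sp(\{e_1+e_2,e_2+e_3\})\iff x_2=x_1+x_3$.

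For item (1), I would first prove ``$\Leftarrow$'' by an explicit construction. Since $\{e_2^2,e_3^2\}$ is linearly dependent, the linear equation $\beta e_2^2+\gamma e_3^2=0$ has a nonzero solution $(\beta_0,\gamma_0)$, and this solution cannot be proportional to $(1,1)$ (otherwise $e_2^2+e_3^2=0$, contrary to hypothesis). Setting $v:=\beta_0e_2+\gamma_0e_3$ one has $e_1v=0$, $(e_2+e_3)v=\beta_0e_2^2+\gamma_0e_3^2=0$ and $v\notin\K(e_2+e_3)$, so $\{e_1,\,e_2+e_3,\,v\}$ is a natural basis of $A$ extending the natural basis $\{e_1,e_2+e_3\}$ of $\soc(A)$; hence $\soc(A)$ has the extension property. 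For ``$\Rightarrow$'', assume $\soc(A)$ has the extension property, so some natural basis $\mathcal C$ of $A$ has two of its elements spanning $\soc(A)$. By Proposition~\ref{gutten}(2), up to permuting $\{e_1,e_2,e_3\}$ and rescaling, $\mathcal C$ is either a reordering of $\{e_1,e_2,e_3\}$ or of the form $\{e_p+ke_q,\,e_p+k'e_q,\,e_r\}$ with $\{p,q,r\}=\{1,2,3\}$ and $k\neq k'$ in $\K^\times$. Since $\soc(A)=\sp(\{e_1,e_2+e_3\})$ contains $e_1$ but neither $e_2$ nor $e_3$, it equals neither $\sp(\{e_m,e_n\})$ for $m\neq n$ nor $\sp(\{e_p+ke_q,e_p+k'e_q\})=\sp(\{e_p,e_q\})$; hence $\mathcal C$ has the second form and $\soc(A)=\sp(\{e_p+ke_q,e_r\})$, which (as $e_r\in\soc(A)$ forces $r=1$, and then $e_p+ke_q\in\soc(A)$ forces $k=1$) yields, after relabelling, $\mathcal C=\{e_2+e_3,\,e_2+k'e_3,\,e_1\}$ with $k'\in\K^\times\setminus\{1\}$. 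Naturality of $\mathcal C$ then gives $0=(e_2+e_3)(e_2+k'e_3)=e_2^2+k'e_3^2$, so $\{e_2^2,e_3^2\}$ is dependent; and if also $e_2^2+e_3^2=0$, then $e_2^2=-e_3^2=-k'e_3^2$ and $k'\neq 1$ force $e_3^2=e_2^2=0$, whence $A^2=\K e_1^2$ has dimension $\le 1$, contradicting $\dim(A^2)=2$. Therefore $e_2^2+e_3^2\neq0$.

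For item (2), the identity $\omega_{2i}=\omega_{1i}+\omega_{3i}$ is immediate: $\soc(A)=\sp(\{e_1+e_2,e_2+e_3\})$ is an ideal, so $e_1^2=(e_1+e_2)e_1$, $e_2^2=(e_1+e_2)e_2$ and $e_3^2=(e_2+e_3)e_3$ all lie in it, and the membership criterion applied to $e_i^2=\sum_j\omega_{ji}e_j$ gives the claim for $i=1,2,3$. For the last assertion, I would first observe that $\soc(A)$ is never the span of two elements of a natural basis of $A$: by Proposition~\ref{gutten}(2) the span of any two elements of any natural basis of $A$ contains a nonzero multiple of some $e_m$, whereas $\soc(A)=\sp(\{e_1+e_2,e_2+e_3\})$ contains no nonzero multiple of any $e_m$ (here $\ann(A)=0$ enters only through Proposition~\ref{gutten}(2)). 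Now suppose, for a contradiction, that $0\neq v\in\soc(A)$ is a natural vector, and extend it to a natural basis $\{f_1,f_2,f_3\}$ of $A$ with $f_1$ a scalar multiple of $v$. As $\soc(A)$ is two-dimensional and contains $f_1$, write $\soc(A)=\sp(\{f_1,\,\beta f_2+\gamma f_3\})$ with $(\beta,\gamma)\neq(0,0)$. If $\beta=0$ or $\gamma=0$, then $\soc(A)$ is the span of two elements of $\{f_1,f_2,f_3\}$, contradicting the previous observation; if $\beta,\gamma\neq 0$, then $\{f_1,\beta f_2,\gamma f_3\}$ is again a natural basis of $A$ and $\soc(A)=\sp(\{f_1,\,(\beta f_2)+(\gamma f_3)\})$, contradicting the hypothesis that no natural basis $\{e_i'\}$ of $A$ realises $\soc(A)$ in the form $\sp(\{e_1',e_2'+e_3'\})$. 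Hence $\soc(A)$ contains no natural vector.

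All the computations are routine; the delicate points are, in item (1), the matching of $\sp(\{e_1,e_2+e_3\})$ against the list in Proposition~\ref{gutten}(2) to pin $\mathcal C$ down and the elimination of the degenerate possibility $e_2^2+e_3^2=0$ via $\dim(A^2)=2$, and, in item (2), recognising that the single argument with $\{f_1,f_2,f_3\}$ rules out \emph{all} would-be natural vectors of $\soc(A)$ simultaneously, so that no enumeration of candidate vectors is needed.
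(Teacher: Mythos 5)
Your proof is correct in substance and follows essentially the same route as the paper's. The reverse implication in item (1) is the paper's own construction of the natural basis $\{e_1,e_2+e_3,\beta_0 e_2+\gamma_0 e_3\}$, and you spell out more carefully than the paper does why ``some nontrivial relation $\alpha e_2^2+\beta e_3^2=0$ with $\alpha\ne\beta$'' is equivalent, given $\dim(A^2)=2$, to ``$\{e_2^2,e_3^2\}$ dependent and $e_2^2+e_3^2\ne 0$''. For the last claim of item (2) your argument is a cleaner reorganization of the paper's: instead of enumerating the possible supports of a natural vector of $\soc(A)$ (which in the paper forces a separate characteristic-two case), you extend the vector to a natural basis $\{f_1,f_2,f_3\}$ and observe that $\soc(A)$ must then be either the span of two basis vectors --- impossible, since $\sp(\{e_1+e_2,e_2+e_3\})$ contains no nonzero multiple of any $e_m$ --- or of the forbidden form $\sp(\{f_1,\beta f_2+\gamma f_3\})$ after rescaling. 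Both versions rest on Proposition~\ref{gutten}(2), so the underlying mechanism is the same.

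The one point you should patch is the forward implication of item (1): Proposition~\ref{gutten}(2) is stated for non-degenerate algebras, whereas item (1) carries no hypothesis $\an(A)=0$ (only the second claim of item (2) does). The gap is harmless but real. To close it, note that if $\an(A)\ne 0$ then some $e_i^2=0$; the case $e_1^2=0$ is incompatible with $e_1\in\soc(A)$ (the socle is a sum of minimal ideals not annihilated by $A$, and an annihilator element lying in such a sum must vanish), and $e_2^2=e_3^2=0$ contradicts $\dim(A^2)=2$, so exactly one of $e_2^2,e_3^2$ is zero --- in which case both sides of the equivalence hold trivially (e.g.\ $\{e_1,e_2+e_3,e_2\}$ is a natural basis when $e_2^2=0$). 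The paper's own proof, which applies the naturality criterion of Remark~\ref{enfadado} to $u_1,u_2$, quietly skirts the same issue, since that criterion also presupposes nonzero squares. With this remark added, your argument is complete.
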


\begin{proof}
We start with the first assertion. If $\soc(A)$ has the extension property then there is a natural basis 
$u_1=xe_1+y(e_2+e_3)$, $u_2=x'e_1+y'(e_2+e_3)$, $u_3=x''e_1+y''e_2+z''e_3$ with
$x,y,x',y',x'',y'',z''\in\K$. Since $u_i$ (for $i=1,2$) is a member of a natural basis, applying \cite[Theorem 3.3.]{BCS} and $\dim(A^2)=2$, we see that the unique possibilities are (up to nonzero scalars and permutations) $u_1=e_1$ and $u_2=e_2+e_3$. In this case, since $u_2u_3=0$ we get $0=(e_2+e_3)(x''e_1+y''e_2+z''e_3)=y''e_2^2+z''e_3^2$ and $y'', z''$ are not both zero. Hence $e_2^2$ and $e_3^2$ are linearly dependent.
Furthermore, we have that $y''\ne z''$, since otherwise $u_3=x''e_1+y''(e_2+e_3)$ would be a linear combination of $u_1$ and $u_2$. Conversely, if there are  scalars $\a,\b\in\K$, $\a\ne \b$, not simultaneously null with $\a e_2^2+\b e_3^2=0$, then $\{e_1,e_2+e_3,\a e_2+\b e_3\}$ is a natural basis.

Let us prove now the second assertion. We have 
$(e_1+e_2)(e_2+e_3)\in\soc(A)$, and hence there are scalars $\a$ and $\b$ such that $\a(e_1+e_2)+\b(e_2+e_3)=(e_1+e_2)(e_2+e_3)=e_2^2$. So 
$\a=\o_{12},\ \b=\o_{32},\ \a+\b=\o_{22}$ and consequently
\[\o_{22}=\o_{12}+\o_{32}.\]
Similarly from the facts that $(e_1+e_2)^2,(e_2+e_3)^2\in\soc(A)$ we conclude that
\[\o_{2i}=\o_{1i}+\o_{3i},\quad (i=1,2,3).\]

Assume now that $z=x(e_1+e_2)+y(e_2+e_3)$  is natural. By Proposition \ref{gutten} we know that $\Supp(z)\ne \{1,2,3\}$. If $\Supp(z)$ has cardinal $1$, for instance $\Supp(z)=\{1\}$, then $x\ne 0$. Also $x+y= 0$, what implies that $y\ne 0$, a contradiction.
Thus $\Supp(z)$ has cardinal $2$. If the natural basis containing $z$ is 
$\{z,z',z''\}$ then, applying Proposition~\ref{gutten}, we get two possibilities (scaling if necessary): 
\begin{enumerate}
    \item $z=e_1+e_2$, $z'=e_1+k e_2$ and $z''=e_3$ (where $k\ne 1$ and $e_1^2+k e_2^2=0$).
    \item \label{plato} $z=e_2+e_3$, $z'=e_2+k e_3$ and $z''=e_1$ (where $k\ne 1$ and $e_2^2+k e_3^2=0$).
    \item $z=e_1+e_3$, $z'=e_1+k e_3$ and $z''=e_2$ (where $k\ne 1$ and $e_1^2+k e_3^2=0$).
\end{enumerate} 

Now, we analyze the first possibility. Since $e_2=\frac{z-z'}{1-k}$, then $e_2+e_3=\frac{1}{1-k}(z-z')+z''$ and this implies that if
$\l=\frac{1}{k-1}$ we have 
\[\soc(A)=\sp(\{e_1+e_2,e_2+e_3\})=\sp(\{z,\l z'+z''\})\]
contradicting the hypothesis that there is no natural basis 
$\{e_i'\}_1^3$ of $A$ for which 
    $\soc(A)=\sp(\{e_1',e_2'+e_3'\})$. 
    
Reasoning in a similar way, we get that  item \eqref{plato} implies a contradiction. For the third possibility, since $z=xe_1+(x+y)e_2+ye_3=e_1+e_3$, then $x=y=1$ and $x+y=0$, so $2=0$. Therefore $\K$ must have characteristic two. In this case, we can see that $\soc(A)=\sp(\{e_1+e_3,e_1+e_2\})=\sp(\{z,\l z'+z''\})$ with $\l=\frac{1}{k-1}$, a contradiction.
\end{proof}

Thus we proceed assuming that $\soc(A)$ is the linear span of 
$\{e_1,e_2+e_3\}$ for a natural basis $\{e_i\}_1^3$ of $A$ and 
$\soc(A)$ does not have the extension property.
A priory, we have two possibilities for $\soc(A)$: either it is a minimal ideal of $A$ or a direct sum of two minimal ideals of $A$. However the next lemma shows that we must focus only in the case in which $\soc(A)$ is minimal.

\begin{proposition}\label{pera}
Let $A$ be a non-degenerate three-dimensional evolution algebra, 
$\soc(A)=\sp(\{e_1,e_2+e_3\})$  and $\soc(A)$ does not have the extension property. 
If $\soc(A)$ is not minimal, then $A$ itself is decomposable and isomorphic to the evolution algebra with structure matrix 
\begin{equation}\label{borracho}
    \tiny\begin{pmatrix}1 & 0 & 0\cr 
    0 & 1 & -1\cr 0 & -1 & 1\end{pmatrix}.
\end{equation}
If $\soc(A)$ is minimal, then $A$ is indecomposable and:
\begin{enumerate}
    \item $\soc(A)=A^2$ and $\ssi(A)=1$.
    \item \label{pera2} There exist $b_0 \in B$ and $\varphi \in \endo{}_{\K}(B)$ such that $A\cong \Au(B,\varphi, b_0,0)$, where $B=\soc(A)$ is a two-dimensional evolution algebra.
\end{enumerate}
\end{proposition}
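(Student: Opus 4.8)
The plan is to first record one structural fact valid in both cases, and then split the argument according to whether $\soc(A)$ is minimal.

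First I would observe that $\soc(A)$ is an ideal of $A$ containing $e_1$ and $e_2+e_3$, so multiplying these by the natural basis elements gives $e_1^2=e_1e_1\in\soc(A)$, $e_2^2=e_2(e_2+e_3)\in\soc(A)$ and $e_3^2=e_3(e_2+e_3)\in\soc(A)$; hence $A^2\subseteq\soc(A)$. Since $A$ is commutative with zero annihilator and $\dim(\soc(A))=2>1$, the observation made just before Lemma~\ref{grajo} gives $\dim(A^2)>1$, so $A^2=\soc(A)$. Moreover $e_1(e_2+e_3)=0$, so $\{e_1,e_2+e_3\}$ is a natural basis of $B:=\soc(A)$, which is therefore a two-dimensional evolution algebra. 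This already proves that $\soc(A)=A^2$.

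\emph{Case: $\soc(A)$ is not minimal.} Recalling that the socle of a commutative algebra is a direct sum of minimal ideals, $\soc(A)$ — being two-dimensional and not minimal — is a direct sum of two one-dimensional minimal ideals of $A$, so in particular it is decomposable as an algebra. I claim $\soc(A)^2\neq 0$: if $\soc(A)^2=0$ then $e_1^2=0$ and $e_2^2+e_3^2=0$, whence $A^2=\sp(\{e_1^2,e_2^2,e_3^2\})=\sp(\{e_2^2\})$ would have dimension at most $1$, contradicting $\dim(A^2)=\dim(\soc(A))=2$. Thus the hypotheses of Corollary~\ref{pobrereferee} are met, and it yields that $A$ is decomposable and isomorphic to the evolution algebra with structure matrix \eqref{borracho}.

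\emph{Case: $\soc(A)$ is minimal.} Here $\soc(A)=A^2$ by the first paragraph. Since $e_3^2\in\soc(A)$, the class of $e_3$ generates the one-dimensional quotient $A/\soc(A)$ and has zero square there, so $\soc(A/\soc(A))=0$ and hence $\soc^{(2)}(A)=\soc(A)$; as the socle chain is then constant from its first term on, $\ssi(A)=1$. For indecomposability, a decomposition $A=A_1\oplus A_2$ with $\dim(A_1)=1$ would exhibit $A_1$ as a one-dimensional (hence minimal) ideal of $A$: $AA_1=0$ is impossible because $\an(A)=0$, while $AA_1\neq 0$ forces $A_1\subseteq\soc(A)$, a proper nonzero subideal contradicting minimality of $\soc(A)$. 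Finally, to realise $A$ as an adjunction of type two, pick $w\in A\setminus B$ (for instance $w=e_3$), so that $A=B\oplus\K w$ as vector spaces; since $B$ is an ideal, $\varphi:=L_w|_B$ maps $B$ into $B$, and $b_0:=w^2$ lies in $A^2=B$. The linear bijection $F\colon\Au(B,\varphi,b_0,0)\to A$ given by $F(b,\lambda):=b+\lambda w$ is an algebra homomorphism: using commutativity, $F((b,\lambda)(b',\lambda'))=bb'+\lambda' wb+\lambda wb'+\lambda\lambda' w^2=(b+\lambda w)(b'+\lambda' w)=F(b,\lambda)F(b',\lambda')$. Hence $A\cong\Au(B,\varphi,b_0,0)$ with $B=\soc(A)$ two-dimensional, as required.

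The only step requiring a little care is the exclusion of $\soc(A)^2=0$ in the non-minimal case, which is exactly what permits the invocation of Corollary~\ref{pobrereferee}; everything else is routine bookkeeping with the ideal structure together with the explicit isomorphism $F$.
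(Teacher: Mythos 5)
Your proof is correct and follows essentially the same route as the paper: the non-minimal case is reduced to Corollary~\ref{pobrereferee}, the identities $A^2=\soc(A)$ and $\ssi(A)=1$ are derived from $e_i^2\in\soc(A)$ and the triviality of the one-dimensional quotient, and your isomorphism $F(b,\lambda)=b+\lambda e_3$ is precisely the inverse of the map $\Omega$ the paper writes down. A small point in your favour: you explicitly verify the hypothesis $\soc(A)^2\neq 0$ before invoking Corollary~\ref{pobrereferee} (via $\dim(A^2)=2$), a check the paper leaves implicit.
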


\begin{proof}
If $\soc(A)$ is not minimal we apply Corollary~\ref{pobrereferee} and we get that the structure matrix relative to a natural basis is as \eqref{borracho}. \\
Let us consider now the case in which $\soc(A)$ is minimal.
Then, since 
$e_1\in\soc(A)$, $e_2^2=e_2(e_2+e_3)$ and $e_3^2=e_3(e_2+e_3)$, we have $e_i^2\in\soc(A)$ for $i=1,2,3$. Hence $0\ne A^2\subset\soc(A)$ and, 
by minimality of the socle, $A^2=\soc(A)$. Then $A/A^2 \cong K $ with zero product, so $\soc(A/\soc(A))=\{0\}$, implying $\ssi(A)=1$. In this case, by Definition~\ref{taza} we have that $A\cong \Au(B,\varphi, b_0,0)$, where $B=\soc(A)$ is a two-dimensional evolution algebra,  $b_0=\w_{13}e_1+\w_{23}(e_2+e_3)$ and the matrix of the linear map $\varphi$ in the basis $\{e_1,e_2+e_3\}$ is $\tiny \begin{pmatrix} 0 & \w_{13} \cr 0 & \w_{23}\end{pmatrix}$. Indeed,   let $\Omega\colon A \to   B\times \K$ be the isomorphism of algebras where $\Omega(e_1)=(e_1,0)$, $\Omega(e_2)=(e_2+e_3,-1)$ and  $\Omega(e_3)=(0,1)$. In this case we have that $A$ is indecomposable. Indeed, if $A=I\oplus J$, then $\soc{(A)}=I$, a contradiction since $\soc(A)$ does not have the extension property.
\end{proof}

\begin{lemma}\label{frambuesa}
Let $A=\Au(B,\varphi,b_0)$ be a  three-dimensional evolution algebra and  $B:=\sp(\{b_1,b_2\})$ with $\{b_1,b_2\}$ a natural basis of $B$.  Then, the following statements are equivalent:
\begin{enumerate}
    \item \label{batata}There exists $b_3\in B $ such that $\varphi=-L_{b_3}$.
    \item \label{patata}$\{(b_1,0),(b_2,0),(b_3,1)\}$ is a natural basis of $A$.
\end{enumerate}
\end{lemma}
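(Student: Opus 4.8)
The plan is to prove the equivalence by a direct inspection of the multiplication table of the proposed triple. First I would recall that, by Definition~\ref{adjdos}, the notation $\Au(B,\varphi,b_0)$ stands for $\Au(B,\varphi,b_0,0)$, so the product of $A$ is the one in \eqref{loque} with $k_0=0$, namely $(b,\l)(b',\l')=(bb'+\l'\varphi(b)+\l\varphi(b')+\l\l'b_0,\,0)$. Then I would observe that, for \emph{any} choice of $b_3\in B$, the set $\{(b_1,0),(b_2,0),(b_3,1)\}$ is already a $\K$-basis of $A$: the first two vectors span $B\times\{0\}$, the third has nonzero second coordinate and hence lies outside $B\times\{0\}$, and $\dim(A)=\dim(B)+1=3$. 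Consequently, being a \emph{natural} basis is equivalent to the vanishing of all products of distinct elements of this triple.

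Next I would write down those three products. Since $\{b_1,b_2\}$ is a natural basis of $B$, we have $b_1b_2=0$, hence $(b_1,0)(b_2,0)=(b_1b_2,0)=(0,0)$; this imposes no condition. The remaining products are $(b_i,0)(b_3,1)=(b_ib_3+\varphi(b_i),0)=(L_{b_3}(b_i)+\varphi(b_i),0)$ for $i=1,2$. (The square $(b_3,1)^2$ is irrelevant, since naturality only constrains products of distinct basis vectors.) Therefore $\{(b_1,0),(b_2,0),(b_3,1)\}$ is a natural basis of $A$ if and only if $\varphi(b_i)=-L_{b_3}(b_i)$ for $i=1,2$; as $\varphi$ and $L_{b_3}$ are both linear and $\{b_1,b_2\}$ spans $B$, this is in turn equivalent to $\varphi=-L_{b_3}$.

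From this the two implications are immediate: \eqref{batata}$\Rightarrow$\eqref{patata} because if $\varphi=-L_{b_3}$ for some $b_3\in B$, then that same $b_3$ makes the triple a natural basis by the computation above; and \eqref{patata}$\Rightarrow$\eqref{batata} because if the triple is a natural basis, the computation forces $\varphi=-L_{b_3}$ for the element $b_3$ appearing in it. I do not expect any genuine obstacle here — the entire argument is a short multiplication-table check — the only mild point to keep in mind being that naturality constrains only the off-diagonal products, so $b_0$ (which enters only through $(b_3,1)^2$) plays no role whatsoever.
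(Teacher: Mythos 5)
Your proposal is correct and follows essentially the same route as the paper: both reduce the claim to the computation $(b_i,0)(b_3,1)=(b_ib_3+\varphi(b_i),0)$ for $i=1,2$ and then use that $\{b_1,b_2\}$ spans $B$ to pass between the pointwise conditions and $\varphi=-L_{b_3}$. Your additional remarks (that the triple is automatically a linear basis, and that $b_0$ and the square $(b_3,1)^2$ play no role) are accurate but just make explicit what the paper leaves implicit.
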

\begin{proof} First, we write $u_1=(b_1,0)$ and $u_2=(b_2,0)$. Let us see that
\eqref{batata} implies \eqref{patata}. Indeed, $u_i(b_3,1)=(b_ib_3+\varphi(b_i),0)=(0,0)$ for $i=\{1,2\}$. Therefore $\{u_1,u_2,(b_3,1)\}$ is a natural basis of $A$. Conversely, if $\{u_1,u_2,(b_3,1)\}$ is a natural basis, then $b_ib_3+\varphi(b_i)=0$ for $i\in\{1,2\}$. As $\{b_1,b_2\}$ is a basis of $B$ then $\varphi=-L_{b_3}$.
\end{proof}

Consider a non-degenerate evolution algebra $A$ of dimension $3$, with a socle of dimension $2$, which is an evolution algebra generated by $\{e_1,e_2+e_3\}$, where $\{e_i\}_{i=1}^3$ is a natural basis of $A$.
Further assume that the socle does not have the extension property.
Then, we know by Proposition~\ref{pera} that $\soc(A)=A^2$ and if $A$ is indecomposable, then $\soc(A)$ is not a direct sum of two one-dimensional ideals, hence it is a minimal ideal and $\ssi(A)=1$.

\begin{theorem}\label{parthree}
Let $A$ be a non-degenerate three-dimensional evolution algebra with natural basis $\{e_1,e_2,e_3\}$ and  $B:=\soc(A)=\sp(\{e_1,e_2+e_3\})$. Assume that $\soc(A)$ does not have the extension property. If $A$ is indecomposable, then:
\begin{enumerate}
\item $B$ is an evolution algebra and there is a linear map $\varphi\colon B\to B$ and an element $b_0\in B$ such that $A\cong\Au(B,\varphi,b_0,0)$ (see Proposition \ref{pera} (\ref{pera2})).
\item $B$ is a minimal ideal of $A$, equivalently the unique ideals of $B$ which are $\varphi$-invariant are $0$ and $B$ itself (see Lemma  (\ref{naranja})).
\item  There is no $b\in B$ such that $\varphi=L_b$ (see Lemma \ref{frambuesa}).
\end{enumerate}
The moduli set for this
class of algebra is $(\endo_\K(B)\times B)/\G$ where $\G=\aut(B)\times B\times\K^\times$ (see Proposition \ref{pollo}). Thus
the isomorphy classes of algebras $\Au(B,\varphi,b_0,0)$ (for fixed $B$) are in one-to-one correspondence with the elements of the set $(\endo_\K(B)\times B)/\G$ (see equation \eqref{poy} for the definition of the action).
\end{theorem}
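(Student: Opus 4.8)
The plan is to read off the four assertions from structural results already in hand, being careful only about which hypotheses are active at each step. Item (1) is precisely Proposition~\ref{pera}(\ref{pera2}): under our standing hypotheses ($A$ non-degenerate and indecomposable, $\soc(A)=\sp(\{e_1,e_2+e_3\})$, and this socle without the extension property), that proposition first yields that $\soc(A)$ is a minimal ideal with $\soc(A)=A^2$ and $\ssi(A)=1$, and then produces the identification $A\cong\Au(B,\varphi,b_0,0)$ with $B=\soc(A)$ a two-dimensional evolution algebra having natural basis $\{b_1,b_2\}:=\{e_1,e_2+e_3\}$, $b_0=\w_{13}e_1+\w_{23}(e_2+e_3)$, and $\varphi$ the explicit linear map of that proof. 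So (1) requires no new work.

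For (2) I would invoke Lemma~\ref{naranja}: since $A$ is non-degenerate, $A=\Au(B,\varphi,b_0,0)$ has zero annihilator, so the lemma applies and states that $B\times\{0\}$ is a minimal ideal of $A$ if and only if the only $\varphi$-invariant ideals of $B$ are $0$ and $B$. As $\soc(A)=B\times\{0\}$ is minimal by the first part of Proposition~\ref{pera}, the equivalence gives (2) at once.

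For (3) I would argue by contradiction. If there were $b_3\in B$ with $\varphi=-L_{b_3}$, then Lemma~\ref{frambuesa}, applied with the natural basis $\{b_1,b_2\}$ of $B$, would exhibit $\{(b_1,0),(b_2,0),(b_3,1)\}$ as a natural basis of $A$; then $\soc(A)=B\times\{0\}=\sp(\{(b_1,0),(b_2,0)\})$ would be spanned by two members of a natural basis of $A$, i.e.\ $\soc(A)$ would have the extension property, contrary to hypothesis. Hence no such $b_3$ exists, which is (3) (and by Lemma~\ref{frambuesa} its nonexistence is in fact equivalent to the failure of the extension property in the indecomposable case).

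Finally, the moduli-set claim is a direct application of Proposition~\ref{pollo}: by (2) the maps $\varphi$ occurring here have no non-trivial invariant ideals; since $A^2=\soc(A)\subsetneq A$, the algebra $\Au(B,\varphi,b_0,0)$ is not semisimple; and it has zero annihilator. Proposition~\ref{pollo} then says that $\Au(B,\varphi,b_0,0)\cong\Au(B,\varphi',b_0',0)$ exactly when $(\varphi,b_0)$ and $(\varphi',b_0')$ lie in the same orbit of the action of $\G=\aut(B)\times B\times\K^\times$ on $\endo_{\K}(B)\times B$ introduced right after it, so $(\G,\endo_{\K}(B)\times B)$ is a moduli set and the isomorphism classes are in bijection with $(\endo_{\K}(B)\times B)/\G$, as asserted. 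I do not expect a genuine obstacle; the one point I would take care with is the coherence of the identifications: an isomorphism between two algebras of this class carries socle to socle, hence restricts to an isomorphism $B\to B'$ of the underlying two-dimensional evolution algebras, which both legitimizes fixing $B$ in the moduli description and matches "minimality of $B$ inside $A$" with "minimality of $B\times\{0\}$ inside the adjunction" — this is exactly the reduction already carried out before Proposition~\ref{pollo}, so nothing further is needed.
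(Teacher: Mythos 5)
Your proposal is correct and follows essentially the same route the paper intends: the theorem is assembled from Proposition~\ref{pera} (minimality of the socle via the contrapositive of its first part, then the identification $A\cong\Au(B,\varphi,b_0,0)$), Lemma~\ref{naranja} for the equivalence in item (2), Lemma~\ref{frambuesa} plus the failure of the extension property for item (3), and Proposition~\ref{pollo} for the moduli set, exactly as the parenthetical citations in the statement indicate. The hypotheses of each cited result (zero annihilator from non-degeneracy, non-semisimplicity from $\dim\soc(A)=2<3$) are verified correctly, so nothing is missing.
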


Finally, we must analyze the case in which $\soc(A)=\sp(\{e_1+e_2,e_2+e_3\}$ and there is no natural basis $\{e_1',e_2',e_3'\}$ such that $\soc(A)=\sp(\{e_1',e_2'+e_3'\}$. We know that no vector of the socle is in a natural basis of $A$ by Proposition \ref{allez}.

\begin{theorem}\label{carnosilla}
Let $A$ be a non-degenerate three-dimensional evolution algebra with natural basis $\{e_1,e_2,e_3\}$. Suppose that
$\soc(A)=\sp(\{e_1+e_2,e_2+e_3\})$ and $\soc(A)$ does not have the extension property. Moreover, there is no natural basis $\{e_1',e_2',e_3'\}$ such that $\soc(A)=\sp(\{e_1',e_2'+e_3'\})$. Then
\begin{enumerate}
\item \label{torre} $\soc(A)=A^2$ is a  minimal ideal and  $\ssi(A)=1$.
\item \label{caballo} There exist $b_0 \in B$ and $\varphi \in \endo{}_{\K}(B)$ such that $A\cong \Au(B,\varphi, b_0)$, where $B=\soc(A)$ is a two-dimensional algebra.
\end{enumerate}
The moduli set for this
class of algebra is $(\endo_\K(B)\times B)/\G$ where $\G=\aut(B)\times B\times\K^\times$ (see Proposition \ref{pollo}). Thus
the isomorphism classes of algebras $\Au(B,\varphi,b_0,0)$ (for $B$ fixed) are in one-to-one correspondence with the elements of the set $(\endo_\K(B)\times B)/\G$ (see equation \eqref{poy} for the definition of the action).
\end{theorem}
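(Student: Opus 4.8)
The plan is to follow the pattern of Proposition~\ref{pera}, which treated the companion case $\soc(A)=\sp(\{e_1,e_2+e_3\})$, and to reduce (\ref{caballo}) to an ``adjunction of type two''. First I would show $A^2=\soc(A)$. Since $e_1+e_2,e_2+e_3\in\soc(A)$ and $\{e_i\}$ is a natural basis, the products $(e_1+e_2)(e_2+e_3)=e_2^2$, $(e_1+e_2)^2=e_1^2+e_2^2$ and $(e_2+e_3)^2=e_2^2+e_3^2$ all lie in $\soc(A)$; hence $e_1^2,e_2^2,e_3^2\in\soc(A)$ and $A^2\subseteq\soc(A)$. Conversely $A$ is commutative with $\ann(A)=0$ and $\dim(\soc(A))=2>1$, so the observation opening Section~\ref{classoc} gives $\dim(A^2)\ge 2$; together with $\dim\soc(A)=2$ this forces $A^2=\soc(A)$. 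Then $A/A^2$ is one-dimensional with zero product, so $\soc(A/\soc(A))=0$, whence $\soc^{(2)}(A)=\soc(A)$ and $\ssi(A)=1$.

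Next I would prove that $\soc(A)$ is minimal, completing (\ref{torre}). If $\soc(A)^2=0$, then from $e_2^2=(e_1+e_2)(e_2+e_3)\in\soc(A)^2$ together with the squares above one gets $e_1^2=e_2^2=e_3^2=0$, contradicting $\ann(A)=0$; hence $\soc(A)^2\ne 0$. Being the socle, $\soc(A)$ is a direct sum of minimal ideals; if it were not itself minimal it would be the direct sum of two one-dimensional ideals, and then $\soc(A)$ would be two-dimensional, decomposable, without the extension property, and with $\soc(A)^2\ne 0$, so Corollary~\ref{pobrereferee} would force $A$ to be isomorphic to the evolution algebra with structure matrix~\eqref{mat}. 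A short direct computation shows that this algebra admits a natural basis $\{e_1',e_2',e_3'\}$ with $\soc=\sp(\{e_1',e_2'+e_3'\})$, contradicting the standing hypothesis; hence $\soc(A)$ is a two-dimensional minimal ideal.

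For (\ref{caballo}), set $B:=\soc(A)=A^2$ and choose $w\in A\setminus B$, e.g.\ $w=e_3$ (which is not a linear combination of $e_1+e_2$ and $e_2+e_3$); then $A=B\oplus\K w$ as a vector space. As $B$ is an ideal, $\varphi\colon B\to B$, $\varphi(x):=xw$, is a well-defined linear map, and $w^2\in A^2=B$ with $w^2\ne 0$ (otherwise $w\in\ann(A)$), so put $b_0:=w^2\in B$. Expanding $(x+\lambda w)(x'+\lambda' w)$ and using that $A/B$ has zero product, one checks that $\Omega\colon A\to\Au(B,\varphi,b_0,0)$, $\Omega(x+\lambda w)=(x,\lambda)$, is an algebra isomorphism; thus $A\cong\Au(B,\varphi,b_0)$ with $B=\soc(A)$ a two-dimensional algebra (which, in contrast with the situation of Proposition~\ref{pera}, need not come with an obvious natural basis, which is why only ``algebra'' is claimed).

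Finally, every $A$ obtained in this way has zero annihilator, is not semisimple (as $\dim\soc(A)=2<3$, so $A\ne\soc(A)$), satisfies $\soc(A)=B\times\{0\}$, and has $\varphi$ with no non-trivial invariant ideals, the last two facts by Lemma~\ref{naranja}; moreover, since no vector of $B$ is natural in $A$ (Proposition~\ref{allez}), there is no $b\in B$ with $\varphi=-L_b$ (Lemma~\ref{frambuesa}). Fixing the isomorphism type of $B$ and invoking Proposition~\ref{pollo} for this class of algebras $\Au(B,\varphi,b_0,0)$ then gives that the isomorphism classes are in one-to-one correspondence with $(\endo_\K(B)\times B)/\G$, where $\G=\aut(B)\times B\times\K^\times$ acts by the corresponding action on $\endo_\K(B)\times B$, which is the asserted moduli set. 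The step I expect to be the main obstacle is ruling out a decomposable socle: one must compute the socle of the algebra~\eqref{mat} explicitly, produce the offending natural basis, and verify that Corollary~\ref{pobrereferee} genuinely applies (i.e.\ that $\soc(A)^2\ne 0$ in our situation); the remaining ``adjunction of type two'' computations are routine.
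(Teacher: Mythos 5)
Your argument is correct and follows essentially the same route as the paper's proof: $A^2\subseteq\soc(A)$ forces $A^2=\soc(A)$ and $\ssi(A)=1$, minimality is obtained by ruling out a decomposable socle through Corollary~\ref{pobrereferee} and the structure matrix \eqref{mat}, and the $\Au$-structure comes from the splitting $A=\soc(A)\oplus\K e_3$ exactly as in Subsection~\ref{taza}. The only (harmless) deviations are that you justify $\dim(A^2)\ge 2$ via the observation opening Section~\ref{classoc} rather than via the naturality criterion of Remark~\ref{enfadado}, and that you explicitly verify the hypothesis $\soc(A)^2\ne 0$ of Corollary~\ref{pobrereferee} and the hypotheses of Proposition~\ref{pollo}, which the paper leaves implicit.
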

\begin{proof}
To prove item \eqref{torre}, notice first that it is easy to show that $e_i^2 \in \soc(A)$ for $i=1, 2 , 3$. Then $0 \ne A^2\subset \soc(A)$. This implies $A/\soc(A) \cong \K $ with zero product and therefore $\soc(A/\soc(A))=\{0\}$. Hence, $\ssi(A)=1$. Moreover, if $A^2$ has dimension one, then any nonzero vector is natural (see \cite[Theorem 3.3.]{BCS}), a contradiction. So $\soc(A)=A^2$. If $\soc(A)$ is not minimal, then we apply Corollary~\ref{pobrereferee} and obtain that there exists a natural basis $\{e_1',e_2',e_3'\}$ such that the structure matrix is as \eqref{borracho}, a contradiction because in this case $\soc(A)=\sp(\{e_1',e_2'+e_3'\})$.

For item \eqref{caballo}, by Definition \ref{taza} we have that $A\cong \Au(B,\varphi, b_0)$, where $B=\soc(A)$ is a two-dimensional algebra,  $b_0=\w_{13}(e_1+e_2)+\w_{33}(e_2+e_3)$ and the matrix of the linear map $\varphi$ in the basis $\{e_1+e_2,e_2+e_3\}$ is $\tiny \begin{pmatrix} 0 & \w_{13} \cr 0 & \w_{33}\end{pmatrix}$. Indeed, let $\Omega\colon A \to   B\times \K$ be the isomorphism of algebras where $\Omega(e_1)=(e_1-e_3,1)$, $\Omega(e_2)=(e_2+e_3,-1)$ and  $\Omega(e_3)=(0,1)$. \end{proof}

Let us give an example of an algebra of the kind in Theorem~\ref{carnosilla}. Consider $B=\complex$ as an $\R$-algebra and $A:=\Au(B,\varphi,b_0)$, where $b_0=i$ and $\varphi\colon\complex\to\complex$ is the linear map given by $\varphi(x+iy)=-i(x+y)$ for any $x,y\in\R$. Thus $\varphi(1)=\varphi(i)=-i$. Then $\{(1,1),-(i,1),(0,1)\}$ is a natural basis of $A$
and $B=\complex$ is not an evolution $\R$-algebra since it has no zero divisors. 
Then $B\triangleleft A$ is a minimal ideal since $\complex$ has no proper nonzero ideals (see Lemma \ref{naranja}). Also it is impossible to have $B=\sp(\{e_1',e_2'+e_3'\})$ for some natural basis $\{e_i'\}_{i=1}^3$ of $A$, because $e_1'(e_2'+e_3')=0$ which is not possible in $\complex$. This example proves that the $B$ in Theorem \ref{carnosilla} is not necessarily an evolution algebra.

\subsection{Socle of dimension one.}\label{unidimsoc}

In this final subsection we study the case $\dim(\soc(A))=1$. We distinguish two cases: $\soc(A)^2\neq 0$ and $\soc(A)^2=0$.   For this task we need to introduce two new adjunctions.  The first one will be studied in the following item. 

\subsubsection{Adjunction of type three.}  In order to study the case  $\soc(A)^2\neq0$ we need to define a third type of adjunction of algebras. We construct a new algebra $A$, based on a $\K$ algebra $B$ with a nonzero inner product, such that $\dim(\soc(A))=1+\dim(\soc(B))$. Moreover, we study which conditions are necessary and sufficient for the adjunction to be an evolution algebra.
\begin{definition}
For a given  algebra $B$ with nonzero inner product $\esc{\cdot,\cdot}\colon 
B\times B\to\K$, define $\Av(B,\esc{\cdot , \cdot }):=\K\times B$ with the product
\begin{equation}\label{laqfaltaba}
(\l,b)(\l',b'):=(\l\l'+\esc{b,b'},bb'),\quad \l,\l'\in\K,\ b,b'\in B.
\end{equation}
\end{definition}
Note that the algebra $B$ is not required to be an evolution algebra in this definition.
\begin{remark}\rm 
Let $A$ be a non-degenerate $3$-dimensional evolution algebra of the form 
$A=\K\times B$, where $B$ is a $2$-dimensional algebra and the product of $A$ is given
by $(\l,b)(\l',b')=(\l\l',bb')$. We know that in this case $B$ is a non-degenerate evolution algebra so that 
$\soc(B)\ne 0$ by Proposition \ref{paleta}. Then $\dim(\soc(A))=1+\dim(\soc(B))>1$ and this class of algebras is out of our purpose in this section. This justifies the nonzero inner product hypothesis in  the definition of $\Av(B,\esc{\cdot , \cdot })$.
\end{remark}

\begin{lemma}
If $A=\Av(B,\esc{\cdot , \cdot })$ is a $3$-dimensional algebra then $\soc(A)=\K\times\{0\}$.
\end{lemma}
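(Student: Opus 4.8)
The plan is to prove the two inclusions $\K\times\{0\}\subseteq\soc(A)$ and $\soc(A)\subseteq\K\times\{0\}$ separately, using that $A$ is commutative (as $B$ is and $\esc{\cdot,\cdot}$ is symmetric), so $\soc(A)$ is the sum of the minimal ideals $J\triangleleft A$ with $AJ\neq0$. For the first inclusion I would first check that $\K\times\{0\}$ is an ideal of $A$: by the multiplication rule~\eqref{laqfaltaba}, $(\l,0)(\l',b')=(\l\l',0)\in\K\times\{0\}$ for every $(\l',b')\in A$. Since $\dim_\K A=3$ forces $\dim_\K B=2$, the ideal $\K\times\{0\}$ is one-dimensional, hence minimal, and it is not annihilated by $A$ because $(1,0)(1,0)=(1,0)\neq(0,0)$. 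Therefore $\K\times\{0\}$ is one of the minimal ideals whose sum is $\soc(A)$, and $\K\times\{0\}\subseteq\soc(A)$.

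For the reverse inclusion it suffices to show that every minimal ideal $J\triangleleft A$ with $AJ\neq0$ equals $\K\times\{0\}$. Fix such a $J$. The intersection $J\cap(\K\times\{0\})$ is an ideal contained in the minimal ideal $\K\times\{0\}$, so it is $0$ or $\K\times\{0\}$. If it is $\K\times\{0\}$, then $\K\times\{0\}\subseteq J$ and minimality of $J$ gives $J=\K\times\{0\}$, as desired. So the remaining task is to discard the case $J\cap(\K\times\{0\})=0$. In that case $J(\K\times\{0\})\subseteq J\cap(\K\times\{0\})=0$; evaluating $(\mu,b)(1,0)=(\mu,0)$ for $(\mu,b)\in J$ forces $\mu=0$, hence $J\subseteq\{0\}\times B$. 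Writing $J=\{0\}\times V$ with $V\subseteq B$ a subspace, the ideal condition $(\l',b')(0,b)=(\esc{b',b},b'b)\in\{0\}\times V$ (for all $b'\in B$, $b\in V$) forces $\esc{b',b}=0$ for all $b'\in B$, i.e. $V\subseteq\rad(\esc{\cdot,\cdot})$, and $b'b\in V$, i.e. $V$ is an ideal of $B$; moreover $AJ\neq0$ yields $BV\neq0$.

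The main obstacle is precisely this last step: one must rule out that $B$ has an ideal $V$ with $V\subseteq\rad(\esc{\cdot,\cdot})$ and $BV\neq0$, so that necessarily $J=\{0\}\times V=0$, a contradiction, whence $\soc(A)=\K\times\{0\}$. If $\esc{\cdot,\cdot}$ is nondegenerate this is immediate, since then $V\subseteq\rad(\esc{\cdot,\cdot})=0$; in general I expect it to follow from the hypothesis that $\esc{\cdot,\cdot}$ is nonzero together with a short case analysis on the two-dimensional algebra $B$ and on $\dim\rad(\esc{\cdot,\cdot})\in\{0,1,2\}$, which pins down enough of the multiplication of $B$ and of the form to exclude such a $V$. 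This case analysis is the technical heart of the argument; everything else is the routine bookkeeping sketched above.
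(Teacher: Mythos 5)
Your overall strategy is the same as the paper's: you show that $\K\times\{0\}$ is a one-dimensional ideal with $A(\K\times\{0\})\ne 0$, hence contained in $\soc(A)$, and then argue that any other minimal ideal $J$ with $AJ\ne 0$ must, after multiplying by $(1,0)$, be pushed into $\{0\}\times B$, so that $J=\{0\}\times V$ with $V\triangleleft B$, $V\subseteq\rad(\esc{\cdot,\cdot})$ and $BV\ne 0$. The two-dimensional case $V=B$ is correctly killed by the hypothesis $\esc{\cdot,\cdot}\ne 0$, exactly as in the paper.

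The step you yourself flag as ``the technical heart'' is, however, a genuine gap, and the case analysis you hope will close it does not exist at the stated level of generality. Take $B$ with basis $\{b_1,b_2\}$, product $b_1^2=b_1$, $b_1b_2=b_2b_1=b_2$, $b_2^2=0$, and inner product $\esc{b_1,b_1}=1$, $\esc{b_1,b_2}=\esc{b_2,b_2}=0$. The form is nonzero, yet $\{0\}\times\K b_2$ is a one-dimensional ideal of $\Av(B,\esc{\cdot,\cdot})$ not annihilated by $A$ (since $(0,b_1)(0,b_2)=(0,b_2)$), so it sits inside the socle and the conclusion fails. Thus the configuration you need to exclude --- $V=\K b$ with $b\in B^\bot$, $Bb\subseteq\K b$ and $Bb\ne 0$ --- really can occur, and ruling it out requires an extra hypothesis (for instance that $A$ is a non-degenerate evolution algebra, which by Lemma \ref{ruido} forces $B$ to be an evolution algebra, or an explicit exclusion analogous to condition (1) of Lemma \ref{grincho} for the type-four adjunction). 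For what it is worth, the paper's own proof silently skips this same case: it disposes of one-dimensional ideals $\K(\l,b)$ by computing $(\l,b)(1,0)=(\l,0)\in\K(\l,b)$ and concluding $b=0$, which is only valid when $\l\ne 0$. So your diagnosis of where the difficulty lies is exactly right, but neither your sketch nor the hoped-for case analysis completes the proof of the statement as written.
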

\begin{proof}
The subspace $\K\times\{0\}$ is an ideal of $A$ and $A(\K\times\{0\})\ne 0$, hence
$\K\times\{0\}\subset\soc(A)$. We claim that $\K\times\{0\}=\soc(A)$: if there is another one-dimensional ideal $\K(\l,b)$, then $(\l,b)(1,0)=(\l,0)\in\K(\l,b)$ implying $b=0$, so $\K(\l,b)=\K(\l,0)$. If $\soc(A)=A$ then there is a minimal ideal $J$ of dimension $2$ in $A$, but if $(\l,b)\in J$ we have  $(1,0)(\l,b)=(\l,0)\in J$. We conclude that either 
$J=\{0\}\times B$ or $\K(1,0)\subset J$ (which is not possible by the minimality of $J$).
Now we check that $\{0\}\times B$ is not an ideal of $A$. Indeed, since $(0,b)(0,b')=(\esc{b,b'},bb')\in\{0\}\times B$ we get that $\esc{\ \cdot, \cdot\ }=0$, a contradiction.
\end{proof}

Next we  prove that any non-degenerate three-dimensional evolution algebra with one-dimensional socle (and such that the square of the socle is non-zero) is isomorphic to an adjunction $\Av(B,\esc{\ \cdot , \cdot\ })$, for suitable $B$ and $\esc{\cdot ,\cdot }$. In fact we prove a more general result, valid for finite dimensional evolution algebras. 
\medskip

\begin{theorem}\label{amsismple} 
Let $A$ be a non-degenerate finite-dimensional evolution algebra such that  $\dim(\soc(A))=1$ and $\soc(A)^2\ne 0$. Then $A\cong \Av(B,\esc{\cdot ,\cdot })$ for a suitable $B$ and $\esc{\cdot ,\cdot }$.
\end{theorem}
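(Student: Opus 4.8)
The plan is to realise $A$ as $\K c\oplus B$, where $\K c=\soc(A)$ is generated by an idempotent $c$, $B$ is a carefully chosen vector-space complement satisfying $cB=0$, and then to read off the $\Av$-structure directly from the multiplication of $A$.

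First I would normalise the socle: since $\dim(\soc(A))=1$ and $\soc(A)^2\neq 0$, a generator $c_0$ of $\soc(A)$ satisfies $c_0^2=\mu c_0$ for some $\mu\in\K^\times$, and $c:=\mu^{-1}c_0$ gives $\soc(A)=\K c$ with $c^2=c$ (we may also assume $\dim(A)\geq 2$, the case $\dim(A)=1$ being trivial). The key step is then the choice of complement. Because $\soc(A)$ is an ideal, $ca\in\K c$ for every $a\in A$, so there is a linear functional $f\colon A\to\K$ with $ca=f(a)c$, and $f(c)=1$. Take $B:=\ker f$: since $f\neq0$ this gives $A=\K c\oplus B$, and by construction $cb=f(b)c=0$ for all $b\in B$ (hence $bc=0$ too, by commutativity).

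Now, for $x,y\in B$ decompose $xy=g(x,y)\,c+\beta(x,y)$ along $\K c\oplus B$; commutativity makes $g\colon B\times B\to\K$ a symmetric bilinear form and $\beta\colon B\times B\to B$ a symmetric bilinear product, so $(B,\beta)$ is a commutative algebra. Then $\Phi\colon A\to\K\times B$, $\Phi(\lambda c+b):=(\lambda,b)$, is a linear bijection and an algebra isomorphism onto $\Av(B,g)$, because
\[(\lambda_1 c+x)(\lambda_2 c+y)=\lambda_1\lambda_2\,c^2+\lambda_1(cy)+\lambda_2(xc)+xy=\bigl(\lambda_1\lambda_2+g(x,y)\bigr)c+\beta(x,y),\]
so that $\Phi(a_1a_2)=\Phi(a_1)\Phi(a_2)$ matches exactly the product \eqref{laqfaltaba}. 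It remains to see $g\neq 0$, so that $\Av(B,g)$ is defined: if $g=0$ then $B$ is a subalgebra, and together with $cB=0$ and $\K c\triangleleft A$ this exhibits $A=\K c\oplus B$ as a direct sum of ideals; hence $\ann(A)=\ann(B)$ and $\soc(A)=\K c\oplus\soc(B)$. Non-degeneracy of $A$ gives $\ann(B)=0$, so $B$ (nonzero, and an evolution algebra by \cite[Lemma 5.2.]{CSV1}) has $\soc(B)\neq 0$, forcing $\dim(\soc(A))\geq 2$, a contradiction. Thus $g\neq 0$ and $A\cong\Av(B,g)$.

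The main obstacle is exactly this choice of complement: an arbitrary complement of $\soc(A)$ need not be annihilated by $c$, and without $cB=0$ the product of $A$ does not collapse to the $\Av$-form; passing to $\ker f$ repairs this. Everything else is bookkeeping, and the nonzero-form condition is a one-line socle count.
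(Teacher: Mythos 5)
Your proof is correct and follows essentially the same route as the paper: both take the linear functional induced by multiplication by the (normalised, idempotent) socle generator, use its kernel $B$ as the complement annihilated by that generator, and read the $\Av(B,\esc{\cdot,\cdot})$-structure off the decomposition of $xy$ along $\K c\oplus B$. Your explicit check that the resulting bilinear form is nonzero is a welcome extra — the paper only justifies the nonzero-form hypothesis in a separate remark, and only in dimension $3$ — but it does not change the method.
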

\begin{proof}
 Take a nonzero generator $e$ of $\soc(A)$ and
let $\varphi\colon A\to\K$ be the linear map such that $e x=\varphi(x)e$ for any $x\in A$. Observe that 
$\varphi\ne 0$ because $A$ is a non-degenerate evolution algebra. This implies that the kernel $B:=\Ker(\varphi)$ has dimension $\dim(A)-1$. We know that $\soc(A)^2\ne 0$, hence $e^2\ne 0$. Thus $e\notin B$ and, rescaling if necessary, $e$ can be taken to be  an idempotent with $A=\K e\oplus B$, where $eB=0$ and $B$ is a subspace. So, if we multiply $x,y\in B$, we get a part in the socle and a part in $B$. We can formalize this by saying that for any $x,y\in B$ one has 
\[xy=\esc{x,y}e+\beta(x,y),\]
where $\esc{\ \cdot , \cdot\ }\colon B\times B\to\K$ is a bilinear symmetric form and $\beta\colon B\times B\to B$ is a bilinear map. Thus the multiplication of two arbitrary elements $\lambda e+x$ and 
$\lambda' e+x'$ of $A$ would be 
\[(\l e+x)(\l'e+x')=\l\l' e+\esc{x,x'}e+\beta(x,x').\]
Observe that $\beta$ endows $B$ with an algebra structure, so that writing $xy:=\beta(x,y)$ for $x,y\in B$ we get the product defined in \eqref{laqfaltaba}. Summarizing, $A\cong\Av(B,\esc{\ \cdot , \cdot\ })$.  
\remove{As $\soc(A)$ is simple, then
 $e\in B$ and $e^2=0$. So let us take any complementary subspace of $\K e$, that is, a subspace $C$ with $A=\K e\oplus C$ (note that $C$ can be chosen containing an element $f\in B$ such that $ef=f^2=0$ and $\{e,f\}$ are linearly independent). Now we complete to a basis $\{e,f,g\}$ of $A$. We know that $e g\ne 0$ and hence we may take $e g=e$.
On the other hand, there are scalars $\l_i$ and $\mu_i$ such that $fg=\l_1 e+\l_2 f+\l_3 g$ and $g^2=\mu_1 e+\mu_2 f+\mu_3 g$. Consider the vector space decomposition 
$A=\K e\oplus V$, where $V:=\K f\oplus\K g$. Define the inner product $\pi$ as in Lemma \ref{grajo}. The matrix of $\pi$ relative to the basis $\{e,f,g\}$ is 
\[\begin{pmatrix}0 & 0 & 1\cr 0 & 0 & \l_1\cr 1 & \l_1 & \mu_1\end{pmatrix}\] which has rank $2$. Hence any generator of $\rad(\pi)$ is in a natural basis. The radical of $\pi$ is generated by $\l_1 e-f$, hence this element is natural. But $(\l_1 e-f)^2=0$, which contradicts that $A$ is non-degenerate. Hence this possibility can not occur.}
\end{proof}

So far, the algebra $B$ given in Theorem~\ref{amsismple} has not been proved to be an evolution algebra. We address this task in the following lemma. 

\begin{lemma} \label{ruido}
Let $\{(\l_i,b_i)\}_{i=1}^n$ be a basis of $A=\Av(B,\esc{\cdot ,\cdot })$. Then  the following assertions are equivalent:
\begin{enumerate}
\item The set $\{(\l_i,b_i)\}$ is a natural basis of $\Av(B,\esc{\cdot ,\cdot })$.
\item $B$ is an evolution algebra endowed with an inner product $\esc{\cdot ,\cdot }\colon B\times B\to\K$ such that $\esc{b_i,b_j}=-\l_i\l_j$ if $i\ne j$; and the set $\{b_i\}_{i=1}^n$ is pairwise orthogonal.
\end{enumerate}
\end{lemma}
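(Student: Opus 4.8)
The plan is to prove both implications by a direct computation with the product \eqref{laqfaltaba}. The only point that needs more than a one-line calculation is, in the implication $(1)\Rightarrow(2)$, that one has to \emph{extract} a natural basis of $B$ from the family $\{b_i\}_{i=1}^n$, which has one element too many: indeed $n=\dim(A)=1+\dim(B)$, so $\{b_i\}_{i=1}^n$ is never linearly independent.

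First I would treat $(1)\Rightarrow(2)$. Assume $\{(\l_i,b_i)\}_{i=1}^n$ is a natural basis of $A$. For $i\ne j$ we have $(\l_i,b_i)(\l_j,b_j)=(0,0)$, and expanding via \eqref{laqfaltaba} the first coordinate gives $\l_i\l_j+\langle b_i,b_j\rangle=0$ and the second gives $b_ib_j=0$. Hence $\langle b_i,b_j\rangle=-\l_i\l_j$ for all $i\ne j$ and the set $\{b_i\}$ is pairwise orthogonal. It remains to see that $B$ is an evolution algebra. The second-coordinate projection $\pi\colon A\to B$ is linear and surjective, so $\{b_i=\pi(\l_i,b_i)\}_{i=1}^n$ spans $B$; since $\dim(B)=n-1$, the space of relations $\{(c_i)\in\K^n\colon \sum_i c_ib_i=0\}$ is exactly one-dimensional. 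Choosing a nonzero relation $(c_i)$ and an index $k$ with $c_k\ne 0$, we get $b_k\in\sp(\{b_i\colon i\ne k\})$, so $\{b_i\colon i\ne k\}$ is a spanning set of $\dim(B)=n-1$ vectors, hence a basis of $B$, and it is pairwise orthogonal because $b_ib_j=0$ for all $i\ne j$. Thus $\{b_i\colon i\ne k\}$ is a natural basis of $B$ and $B$ is an evolution algebra.

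For $(2)\Rightarrow(1)$ there is nothing beyond a computation: by hypothesis $\{(\l_i,b_i)\}_{i=1}^n$ is already a basis of $A$, so it suffices to check orthogonality of the product, and for $i\ne j$, using $\langle b_i,b_j\rangle=-\l_i\l_j$ and $b_ib_j=0$, we obtain $(\l_i,b_i)(\l_j,b_j)=(\l_i\l_j+\langle b_i,b_j\rangle,\,b_ib_j)=(0,0)$, so $\{(\l_i,b_i)\}$ is a natural basis of $A$. The only mildly delicate step in the whole argument is the relation-space count used above to pass from ``$n$ pairwise orthogonal vectors spanning $B$'' to ``$B$ is an evolution algebra'', and everything else is immediate from the definition of the product in $\Av(B,\langle\cdot,\cdot\rangle)$.
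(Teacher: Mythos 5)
Your proof is correct and follows the same direct computation with the product \eqref{laqfaltaba} that the paper uses in both directions. The one place you go beyond the paper's own (very terse) proof is in actually establishing, within the implication $(1)\Rightarrow(2)$, that $B$ is an evolution algebra --- via surjectivity of the second projection and the one-dimensionality of the relation space among the $b_i$ --- a step the paper omits from the lemma's proof and only carries out later inside the proof of Theorem~\ref{lbnl}; your version of that step is sound.
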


\begin{proof}
Let $\{(\l_i,b_i)\}_{i=1}^n$ be a natural basis of $A$. If $i\ne j$ we have 
\[0=(\l_i,b_i)(\l_j,b_j)=(\l_i\l_j+\esc{b_i,b_j},b_ib_j)\] implying that $\esc{b_i,b_j}=-\l_i\l_j$ and the $b_i$'s are pairwise orthogonal.

Conversely, let $B$ be an evolution algebra
with an inner product $\esc{\cdot ,\cdot }\colon B\times B\to\K$ such that 
$\esc{b_i,b_j}=-\l_i\l_j$ if $i\ne j$.  Then the set $\{(\l_i,b_i)\}_{i=1}^n$ is orthogonal because 
$(\l_i,b_i)(\l_j,b_j)=(\l_i\l_j+\esc{b_i,b_j},b_ib_j)=(0,b_ib_j)=(0,0)$.
\end{proof}

\subsubsection{Isomorphisms between adjunction algebras of type three.}

In this section we deal the study of isomorphism between two algebras $\Av(B,\esc{\cdot ,\cdot })$ and $\Av(B',\esc{\cdot ,\cdot })'$ for suitable $(n-1)$-dimensional evolution algebras $B$ and $B'$.

\begin{theorem}\label{lbnl}
Let $A$ be a non-degenerate evolution algebra with $\dim(A)=n>0$ and with a $1$-dimensional socle whose square is nonzero. Then  $A\cong\Av(B,\esc{\cdot ,\cdot })$ for a suitable $(n-1)$ dimensional evolution algebra $B$. 
Furthermore, $\Av(B,\esc{\cdot ,\cdot })\cong\Av(B',\esc{\cdot ,\cdot }')$ if and only if there is an isometric isomorphism 
$\beta\colon B\to B'$ (isometric in the sense that $\esc{\b(x),\b(y)}'=\esc{x,y}$ for any $x,y\in B$).
\end{theorem}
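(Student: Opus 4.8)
The plan is to read off the first assertion from Theorem~\ref{amsismple} together with Lemma~\ref{ruido}, and then to settle the isomorphism criterion by tracking how an algebra isomorphism is forced to act on the one-dimensional socle. For the first assertion, Theorem~\ref{amsismple} already yields an algebra $B$ and a (necessarily nonzero) inner product $\esc{\cdot,\cdot}$ with $A\cong\Av(B,\esc{\cdot,\cdot})$; since $\Av(B,\esc{\cdot,\cdot})=\K\times B$ as a vector space, $\dim(B)=n-1$. It remains only to observe that $B$ may be taken to be an \emph{evolution} algebra: $A$, hence $\Av(B,\esc{\cdot,\cdot})$, is an evolution algebra, so it admits a natural basis $\{(\l_i,b_i)\}_{i=1}^n$, and Lemma~\ref{ruido} then forces $B$ to be an evolution algebra (with the $b_i$ pairwise orthogonal and $\esc{b_i,b_j}=-\l_i\l_j$ for $i\neq j$).

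For the ``if'' direction I would argue directly: given an isometric isomorphism $\b\colon B\to B'$, set $F\colon\Av(B,\esc{\cdot,\cdot})\to\Av(B',\esc{\cdot,\cdot}')$, $F(\l,b):=(\l,\b(b))$. This is a linear bijection because $\b$ is, and multiplicativity is the one-line check
\[
F\!\left((\l,b)(\l',b')\right)=(\l\l'+\esc{b,b'},\,\b(bb'))=(\l\l'+\esc{\b(b),\b(b')}',\,\b(b)\b(b'))=F(\l,b)\,F(\l',b'),
\]
where one uses that $\b$ is simultaneously an algebra homomorphism and an isometry.

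For the ``only if'' direction, suppose $F\colon\Av(B,\esc{\cdot,\cdot})\to\Av(B',\esc{\cdot,\cdot}')$ is an isomorphism. First I would note that in either algebra $\K\times\{0\}$ is a minimal ideal that is not annihilated, since it contains the idempotent $(1,0)$ (indeed $(1,0)^2=(1+\esc{0,0},0)=(1,0)$); as the socle is one-dimensional by hypothesis, this yields $\soc(\Av(B,\esc{\cdot,\cdot}))=\K\times\{0\}$, and likewise on the other side. Because $F$ sends socle to socle, it maps $\K\times\{0\}$ onto $\K\times\{0\}$, and since $(c,0)^2=(c^2,0)$ shows $(1,0)$ to be the only nonzero idempotent there, necessarily $F((1,0))=(1,0)$ and hence $F((\l,0))=(\l,0)$ for all $\l\in\K$. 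Writing $F((0,b))=(\f(b),\b(b))$ with $\f\colon B\to\K$ and $\b\colon B\to B'$ linear, the crucial step is to apply $F$ to the identity $(0,b)(1,0)=(0,0)$: this gives $(\f(b),0)=(\f(b)+\esc{\b(b),0}',0)=(0,0)$, so $\f\equiv 0$ and $F((\l,b))=(\l,\b(b))$. Then $\b$ is a linear isomorphism (as $F$ is), and equating $F\!\left((\l,b)(\l',b')\right)$ with $F(\l,b)F(\l',b')$ yields at once $\b(bb')=\b(b)\b(b')$ and $\esc{b,b'}=\esc{\b(b),\b(b')}'$, i.e.\ $\b$ is an isometric isomorphism. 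The computation is routine; the points that need a moment's care are that $F$ must preserve the concrete copy $\K\times\{0\}$ of the socle (using both that it is a non-annihilated minimal ideal and that the socle is one-dimensional) and that the relation forcing $\f\equiv0$ is the vanishing of the product $(0,b)(1,0)$.
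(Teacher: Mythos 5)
Your proposal is correct and follows essentially the same route as the paper: Theorem~\ref{amsismple} plus Lemma~\ref{ruido} for the existence statement, the map $(\l,b)\mapsto(\l,\b(b))$ for sufficiency, and socle-preservation plus a componentwise analysis of the isomorphism for necessity. The only (harmless) variation is that you pin down $F((1,0))=(1,0)$ via idempotency and kill $\f$ using the single relation $(0,b)(1,0)=(0,0)$, whereas the paper extracts $k_0=1$ and $\a=0$ by comparing coefficients of $\l,\l'$ in the full multiplicativity identity.
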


\begin{proof}
 By Theorem \ref{amsismple} we have $A\cong\Av(B,\esc{\cdot ,\cdot })$ for a suitable $B$ and  $\esc{\cdot ,\cdot }$. Let $\{(\l_i,b_i)\}_{i=1}^n$ be a natural basis of $A$. Applying Lemma \ref{ruido}  we get that the set $\{b_i\}_{i=1}^n$ is pairwise orthogonal. We prove that $\{b_i\}_{i=1}^n$ is a system of generators of $B$. Indeed, given $b\in B$, notice that $(0,b)=\sum_i k_i(\l_i,b_i)$ for some scalars $k_i$ and hence $b=\sum_i k_i b_i$. So $\{b_i\}_{i=1}^n$ is a system of generators of $B$. Therefore, we can select a basis of $B$ by removing one vector of the set $\{b_i\}_{i=1}^n$. After reordering if necessary, we may assume that $\{b_i\}_{i=1}
^{n-1}$ is a natural basis of $B$.

Now we prove the second part of the theorem. Assume that the map $\theta\colon\Av(B,\esc{\cdot , \cdot})\to\Av(B',\esc{\cdot , \cdot}')$ is an isomorphism. Then 
$\theta(1,0)=(k_0,0)$ for a nonzero scalar $k_0$ (because $\theta$ fixes the socle). Also $\theta(0,b)=(\a(b),\b(b))$, where $\a\colon B\to\K$ and $\b\colon B\to B'$ are linear.
Thus $\theta(\l,b)=(\l k_0+\a(b),\b(b))$ for an arbitrary $(\l,b)$. Since $\theta$ is an isomorphim we deduce that  $\b$ is an isomorphism. Furthermore, for any $\l,\l'\in\K$ and any $\b,\b'\in B$ we have:
\[\theta((\l,b)(\l',b'))=\theta(\l\l'+\esc{b,b'},bb')=(k_0(\l\l'+\esc{b,b'})+\a(bb'),\b(bb')),\]
\[\theta(\l,b)\theta(\l',b')=(\l k_0+\a(b),\b(b))(\l' k_0+\a(b'),\b(b'))=\]
\[( (\l k_0+\a(b))(\l' k_0+\a(b'))+\esc{\b(b),\b(b')}',\b(b)\b(b')).\]
We also get
\[k_0(\l\l'+\esc{b,b'})+\a(bb')=(\l k_0+\a(b))(\l' k_0+\a(b'))+\esc{\b(b),\b(b')}'\]
\[\Rightarrow k_0\l\l'+k_0\esc{b,b'}+k_0\a(bb')=\l\l'k_0^2+\l k_0\a(b')+\l'k_0\a(b)+\a(b)\a(b')+\esc{\b(b)\b(b')}'.\]
Then $k_0=1$, and $\esc{b,b'}+\a(bb')=\l \a(b')+\l'\a(b)+\a(b)\a(b')+\esc{\b(b)\b(b')}'$. This forces 
$\l\a(b')+\l'\a(b)=0$ and $\esc{b,b'}+\a(bb')=\a(b)\a(b')+\esc{\b(b),\b(b')}'$.
But the identity $\l\a(b')+\l'\a(b)=0$ implies $\a=0$ so that $\esc{b,b'}=\esc{\b(b),\b(b')}'$ for any $b,b'\in B$.

Conversely, let $\b\colon B\to B'$ be an isometric isomorphism. Define $\theta\colon \Av(B,\esc{\cdot , \cdot})\to \Av(B',\esc{\cdot , \cdot}')$ by $\theta(\l,b):=(\l,\b(b))$. It is easy to check that $\theta$ is an isomorphism of algebras. 
\end{proof}

{
\begin{remark}
By Theorem \ref{lbnl} the isomorphism classes 
of algebras $\Av(B,\esc{\cdot,\cdot})$, where $B$ is fixed, can be described by the moduli set $(\G,\hbox{Sym}^2(B))$, where $\G=\aut(B)$. The action $\G\times \hbox{Sym}^2(B)\to \hbox{Sym}^2(B)$ is $\b\cdot\esc{\quad}=\esc{\quad}'$,
where $\esc{x,y}':=\esc{\b(x),\b(x')}$ for any $x,x'\in B$. Thus the algebras in this class are in one-to-one correspondence with the orbit set $\hbox{Sym}^2(B)/\G$.\medskip
\end{remark}

To end the classification of the three-dimensional evolution algebras, we need to classify the non-degenerate evolution algebras $A$ with one-dimensional socle such that $\soc(A)^2=0$. This will be the goal of our next subsection.

\subsubsection{Adjunction of type four.}

The focus of this subsection are the non-degenerate, three-dimensional evolution algebras $A$ with one-dimensional socle verifying $\soc(A)^2=0$. We present an example of such an algebra below.

\begin{example}\rm
Consider $B$ with natural basis $\{b_1,b_2\}$ such that $b_1^2=0$ and $b_2^2=b_1+b_2$. Let $\varphi\in B^*$ (the usual dual space) be such that $\varphi(b_1)=1$ and $\varphi(b_2)=0$. Consider the inner product $\esc{\cdot , \cdot}\colon B\times B\to\K$ with $\esc{b_1,b_1}=-1=\esc{b_1,b_2}$.
Then, define the algebra $A=\K\times B$ with product 
$$(\l,b)(\l',b')=(\esc{b,b'}+\l\varphi(b')+\l'\varphi(b),bb').$$
The vectors $(1,b_1),(1,b_2),(0,b_1)$ form a natural basis and the structure matrix of $A$ relative to this natural basis is 
$$\begin{pmatrix} 1 & h-1 & -1\cr 0 & 1 & 0\cr -1 & 2-h & 1\end{pmatrix},$$ where $h=\esc{b_2,b_2}$. It can be proved that $\soc(A)=\K(e_1-e_3)$, with $(e_1-e_3)^2=0$.
\end{example}

Notice that if $A$ is a non-degenerate, three-dimensional evolution algebra $A$ with one-dimensional socle such that $\soc(A)^2=0$, then we can write  $\soc(A)=\K e$ and $e^2=0$. So we have a linear map 
$\varphi\colon A\to\K$ such that $ex=\varphi(x)e$ for any $x\in A$. Since $A$ is non-degenerate $\varphi\ne 0$, so $\ker(\varphi)$ has dimension ($\dim(A)-1$). But $\ker(\varphi)=\{x\in A\colon ex=0\}$, the annihilator of $e$ in $A$.  This suggests the following definition of adjunction, which is constructed from an arbitrary $\K$ algebra $B$ with an inner product and a linear form. 
}
\begin{definition}\label{semluz}
Let $B$ be a $\K$-algebra with an inner product $\esc{\cdot , \cdot}\colon B\times B\to\K$ and an element $\varphi\in B^*$, that is, $\varphi\colon B\to\K$ is linear. Then we define in $\K\times B$ the product $$(\l,b)(\l',b')=(\esc{b,b'}+\l\varphi(b')+\l'\varphi(b),bb')$$ for any scalars $\l,\l'\in\K$ and $b,b'\in B$. This algebra will be denoted by $\Aw(B,\esc{\cdot , \cdot},\varphi)$. 
\end{definition}

If $A=\Aw(B,\esc{\cdot , \cdot},\varphi)$ is an evolution algebra, then $B$ is also an evolution algebra. Indeed, if we assume that the
collection $\{(\l_i,b_i)\}_{i=1}^n$ is a natural basis for $A$ then, when $i\ne j$, we have \[0=(\l_i,b_i)(\l_j,b_j)=(\esc{b_i,b_j}+\l_i\varphi(b_j)+\l_j\varphi(b_i),b_ib_j)\] and hence $b_ib_j=0$, which proves that the set
$\{b_i\}_{i=1}^n$ is orthogonal. Next we check that it is a system of generators of the vector space of $B$. Take an arbitrary $b\in B$. Then $(0,b)=\sum_i k_i(\l_i,b_i)$, where $k_i\in\K$. So $b=\sum_i k_i b_i$. This way $B$ is an evolution algebra and a natural basis of $B$ can be obtained by removing some $b_j$ from the collection $\{b_i\}_{i=1}^n$.

\begin{theorem}\label{macabel}
If $A$ is a non-degenerate evolution algebra of dimension $n$, with one dimensional socle and $\soc(A)^2=0$, then there is a $(n-1)$ dimensional evolution algebra $B$ endowed with an inner product $\esc{\cdot , \cdot}\colon B\times B\to\K$ and $\varphi\colon B\to\K$ linear such that $A\cong\Aw(B,\esc{\cdot , \cdot},\varphi)$.
\end{theorem}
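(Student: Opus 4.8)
The plan is to follow the pattern of the proof of Theorem~\ref{amsismple}, adapting it to the situation of a square-zero socle. First I would fix a nonzero generator $e$ of $\soc(A)=\K e$; by hypothesis $e^2=0$. Since $\soc(A)$ is an ideal, for every $x\in A$ we have $ex\in\K e$, so there is a unique linear functional $\psi\colon A\to\K$ with $ex=\psi(x)e$ for all $x\in A$. The first point to record is that $\psi\neq 0$: recall that in the commutative case $\soc(A)$ is a direct sum of minimal ideals not annihilated by $A$, so the minimal ideal $\K e$ satisfies $A(\K e)\neq 0$, whence $ex\neq 0$ for some $x$.

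Next I would choose a vector-space complement $C$ of $\K e$ in $A$, so that $A=\K e\oplus C$ and $\dim C=n-1$. For $c,c'\in C$ write $cc'=\theta(c,c')e+q(c,c')$ according to this decomposition, where $\theta\colon C\times C\to\K$ is a symmetric bilinear form and $q\colon C\times C\to C$ is a symmetric bilinear map; both are uniquely determined by the decomposition. Using $e^2=0$ and $ec=\psi(c)e$, a direct computation yields, for arbitrary $\l e+c$ and $\l'e+c'$ in $A$,
\[(\l e+c)(\l'e+c')=\bigl(\theta(c,c')+\l\psi(c')+\l'\psi(c)\bigr)e+q(c,c').\]
Now let $B$ denote the space $C$ endowed with the product $bb':=q(b,b')$, the inner product $\esc{b,b'}:=\theta(b,b')$, and the linear form $\varphi:=\psi|_{C}$. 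Comparing the displayed identity with the product of $\Aw(B,\esc{\cdot ,\cdot },\varphi)$ given in Definition~\ref{semluz}, one sees that the linear bijection $A\to\K\times B$, $\l e+c\mapsto(\l,c)$, is an isomorphism of algebras $A\cong\Aw(B,\esc{\cdot ,\cdot },\varphi)$.

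Finally, since $A$ is an evolution algebra and $A\cong\Aw(B,\esc{\cdot ,\cdot },\varphi)$, the observation made just before the statement of this theorem --- that whenever $\Aw(B,\esc{\cdot ,\cdot },\varphi)$ is an evolution algebra then so is $B$ --- shows that $B$ is an $(n-1)$-dimensional evolution algebra, which is exactly the assertion.

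I do not expect a real obstacle here: the argument is essentially formal once the functional $\psi$ has been produced. The only points requiring a little care are the non-vanishing of $\psi$, which is precisely the non-annihilation condition built into the definition of the socle, and the bookkeeping that makes $q$, $\theta$ and $\psi|_C$ all come from one and the same decomposition $A=\K e\oplus C$; once this is set up, the homomorphism property is the one-line computation displayed above.
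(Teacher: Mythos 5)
Your proposal is correct and follows essentially the same route as the paper: decompose $A=\K e\oplus C$ with $\soc(A)=\K e$ and $e^2=0$, read off the data $\theta$, $q$ and $\psi|_C$ from that decomposition, and verify the one-line product identity against Definition~\ref{semluz}. The only (harmless) difference is that the paper chooses the complement to be spanned by lifts of a natural basis of $A/\K e$, which makes $(C,q)$ visibly an evolution algebra, whereas you take an arbitrary complement and deduce that $B$ is an evolution algebra from the observation stated just before the theorem; both are valid.
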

\begin{proof}
Let $\soc(A)=\K e$. Then we know that $e^2=0$. The quotient algebra $A/\K e$ is an evolution algebra, hence pick one of its natural basis $\{\bar u_1,\cdots, \bar u_{n-1}\}$. Then $\{e,u_1,\cdots, u_{n-1}\}$ is a basis of $A$ and we have $A=\K e\oplus B$, where 
$B$ is the linear span of $u_1,\ldots,u_{n-1}$. Let $p\colon A\to\K$ be the canonical projection satisfying   $a=p(a)e+b$ for $a \in A$ and $b \in B$. Now, we consider the canonical projection $q\colon A\to B$.
This allows us to define 
$\esc{\cdot , \cdot}\colon B\times B\to\K$ by $\esc{x,y}:=p(xy)$ and 
endow $B$ with a $\K$-algebra structure by defining a product $B\times B\to B$ such that $(x,y)\mapsto x\odot y:=q(xy)$.
We also take into account the linear map $\varphi\colon A\to\K$ such that $xe=\varphi(x)e$ for any $x\in A$ (note that $\varphi=L_e$), and consider the restriction $\varphi\colon B\to\K$. Now, we perform the construction 
$\Aw(B,\esc{\cdot , \cdot},\varphi)$. Finally, we prove that there is an isomorphism $f\colon A\cong\Aw(B,\esc{\cdot , \cdot},\varphi)$. Notice that any $x\in A$ can be writen as $x=\lambda e+b$, with $\lambda\in\K$ and 
$b\in B$. Thus we define $f(\l e+b)=(\l,b)$.
Then, if $y=\l' e+b'$ for $\l'\in\K$ and $b'\in B$, we have $xy=\l eb'+\l' eb+bb'=\l\varphi(b')e+\l'\varphi(b)e+\esc{b,b'}e+b\odot b'$, so that $f(xy)=(\l\varphi(b')+\l'\varphi(b)+\esc{b,b'},b\odot b')$. On the other hand 
$f(x)f(y)=(\l,b)(\l',b')=(\l\varphi(b')+\l'\varphi(b)+\esc{b,b'},b\odot b')$. The isomorphic character of $f$ is trivial. 
\end{proof}

Note that in the conditions of {Theorem \ref{macabel}}, if $A$ happens to have dimension $3$, then there are no minimal ideals of $A$ of dimension $3$ or $2$. Indeed: if $J\triangleleft A$ is minimal and $\dim(J)=2$, then $\K e\cap J=0$ and $A=\K e\oplus J$ (direct sum of ideals), In this case $eJ=0$, whence $eA=0$, which is not possible because $A$ is non-degenerate. If $\dim(J)=3$ then $J=A$ and $A$ is not a minimal ideal since it contains $\K e$. Thus the unique possible minimal ideals of $A$ are the one-dimensional.

Next we want to determine what conditions on $A=\Aw(B,\esc{\cdot,\cdot},\varphi)$ are needed in order to have $\soc(A)=\K(1,0)$.

\begin{lemma}\label{grincho}
If $A=\Aw(B,\esc{\cdot,\cdot},\varphi)$ is $3$-dimensional and non-degenerate,
then  $\soc(A)=\K(1,0)$ if and only if both conditions below are satisfied:
 \begin{enumerate}
     \item\label{gri1} There is no nonzero $b\in B$ such that $b\in B^\bot\cap\ker(\varphi)$ and $Bb\subset\K b$.
     \item\label{gri2} There is no nonzero $b\in \ker(\varphi)$ such that
     $zb=(\varphi(z)+\esc{z,b})b$ for any $z\in B$.
 \end{enumerate}
\end{lemma}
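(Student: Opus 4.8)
The plan is to reduce the statement to a classification of the one-dimensional ideals of $A$ and then to match the two families of ``new'' one-dimensional ideals with the negations of \eqref{gri1} and \eqref{gri2}.

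First I would record a general principle: since $A$ is non-degenerate, every nonzero ideal $I$ of $A$ satisfies $AI\ne 0$ (otherwise $I\subseteq\ann(A)=0$), so $\soc(A)$ is exactly the sum of \emph{all} minimal ideals of $A$. Next I would show that every minimal ideal of the $3$-dimensional algebra $A$ is one-dimensional. A three-dimensional minimal ideal would force $A$ to be simple, which is impossible because $\K(1,0)$ is a proper ideal: from Definition~\ref{semluz} we get $(1,0)(\l,b)=(\varphi(b),0)\in\K(1,0)$ and $(1,0)^2=(0,0)$. A two-dimensional minimal ideal $J$ would satisfy $\K(1,0)\not\subseteq J$ by minimality, hence $J\cap\K(1,0)=0$ (as $\dim\K(1,0)=1$) and $A=J\oplus\K(1,0)$ as a direct sum of ideals; then $(1,0)J\subseteq J\cap\K(1,0)=0$ together with $(1,0)^2=0$ gives $\K(1,0)\subseteq\ann(A)=0$, a contradiction. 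Consequently $\K(1,0)\subseteq\soc(A)$ (it is a minimal ideal, and $A\cdot\K(1,0)\ne 0$ by non-degeneracy), and $\soc(A)=\K(1,0)$ if and only if $\K(1,0)$ is the \emph{only} one-dimensional ideal of $A$.

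Then I would classify the one-dimensional ideals $\K(\l,b)$ of $A$: after rescaling there are exactly three mutually exclusive shapes, namely $\K(1,0)$, $\K(0,b)$ with $0\ne b\in B$, and $\K(1,b)$ with $0\ne b\in B$. A direct computation with the product of Definition~\ref{semluz} gives the two remaining cases. Expanding $(0,b)(\l',b')=(\esc{b,b'}+\l'\varphi(b),bb')$ and demanding it lie in $\K(0,b)$ for all $\l',b'$ (take $b'=0$, then $\l'=0$) shows that $\K(0,b)$ is an ideal if and only if $\varphi(b)=0$, $b\in B^{\bot}$ and $Bb\subseteq\K b$. Expanding $(1,b)(\l',b')=(\esc{b,b'}+\varphi(b')+\l'\varphi(b),bb')$: the choice $b'=0$ forces $\varphi(b)=0$, and the remaining requirement becomes $bz=(\esc{b,z}+\varphi(z))b$ for all $z\in B$, i.e.\ (using symmetry of $\esc{\cdot,\cdot}$) exactly $zb=(\varphi(z)+\esc{z,b})b$ for all $z\in B$.

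Finally I would assemble the pieces: $\soc(A)=\K(1,0)$ holds iff $A$ has no one-dimensional ideal of the second or third shape, which by the previous paragraph is precisely the conjunction of \eqref{gri1} and \eqref{gri2} (every implication involved is an equivalence, so both directions of the lemma follow at once). I would also remark, for completeness, that non-degeneracy forces $\varphi\ne 0$ (else $(1,0)\in\ann(A)$), though this is subsumed by arguing directly with $\ann(A)=0$. The only step requiring genuine care is the exclusion of a two-dimensional minimal ideal; the rest reduces to the short bilinear-form computations above.
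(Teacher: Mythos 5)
Your proof is correct and follows essentially the same route as the paper: classify the one-dimensional ideals $\K(0,b)$ and $\K(1,b)$ by direct computation with the product of Definition~\ref{semluz}, and observe that the resulting conditions are exactly the negations of \eqref{gri1} and \eqref{gri2}. The only difference is that you spell out the exclusion of two- and three-dimensional minimal ideals inside the proof, whereas the paper disposes of that point in the remark immediately preceding the lemma; your version of that step is the correct one.
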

 \begin{proof}
 We prove that if \eqref{gri1} and \eqref{gri2} are satisfied, then $\soc(A)=\K(1,0)$.
  To start with, we check that there is no ideal $\K(0,b)\ne 0$.
 Assume on the contrary that $\K(0,b)\triangleleft A$. Then $0=(1,0)(0,b)=(\varphi(b),0).$
Moreover, for any $z\in B$ we have  $(0,z)(0,b)\in\K(0,b)$, which implies 
 $(\esc{z,b},zb)\in\K(0,b)$. Summarizing:
 \begin{equation}\label{weird}
 \begin{cases} b\in\ker(\varphi)\cr Bb\subset\K b\cr 
 \esc{b,B}=0.\end{cases}
 \end{equation}
  Then the hypothesis imply $b=0$, a contradiction.
 Next we prove that there is no ideal of the form $\K(1,b)$ with $b\ne 0$. Assume on the contrary that such an ideal exists. Then 
 $0=(1,0)(1,b)=(\varphi(b),0)$, whence $b\in\ker(\varphi)$. Also 
 \[(0,z)(1,b)=(\varphi(z)+\esc{z,b},zb)\in\K(1,b),\] and therefore $zb=\a(z)b$ for some linear map 
 $\a\colon B\to\K$ such that 
$\varphi(z)+\esc{z,b}=\a(z)$. Thus 
$\varphi(z)b+\esc{z,b}b= zb$ for any $z\in B$. Consequently, the hypothesis in \eqref{gri2} imply $b=0$, a contradiction. So there are no one dimensional ideals other that $\K(1,0)$. 

Next we prove the converse: if $\soc(A)=\K(1,0)$ then \eqref{gri1} and \eqref{gri2} hold. 
In order to do that, take into account that:
\begin{enumerate}[\rm (i)]
     \item $\K (1,b)$ (with $b\ne 0$) is an ideal of $A$ if and only if $b\in \ker(\varphi)$ and $\esc{b,b'}b + \varphi(b')b=bb'$ for every $b' \in B $,  
    \item $\K(0,b)\triangleleft A $ (for $b\ne 0$) if and only if $b \in \ker(\varphi) \cap B^{\bot}$ and $bB\subset \K b$.
 \end{enumerate}
 So, as $\soc{(A)}=\K(1,0)$,  there is no nonzero $b \in B$ satisfying either of the previous conditions, and hence items \eqref{gri1} and  \eqref{gri2} are verified.
 \end{proof}
 
 \begin{remark}
 Note that both conditions given in the lemma above are satisfied, for instance, if no nonzero $b\in B$ satisfies $Bb\subset\K b$.
 \end{remark}
\subsubsection{Isomorphisms between adjunction algebras of type four.}
So far we know that $3$-dimen\-sional non-degenerate evolution algebras $A$, with one dimensional socle such that $\soc(A)^2=0$, are of the form $\Aw(B,\esc{\cdot , \cdot},\varphi)$ for a $2$-dimensional evolution algebra $B$ satisfying both conditions 
$\eqref{gri1}$ and $\eqref{gri2}$ of Lemma \ref{grincho}. Next we investigate the isomorphism problem for two such algebras $\Aw(B,\esc{\cdot , \cdot},\varphi)$ and $\Aw(B',\esc{\cdot , \cdot}',\varphi')$. If 
 $\theta\colon \Aw(B,\esc{\cdot , \cdot},\varphi)\cong \Aw(B',\esc{\cdot , \cdot}',\varphi')$ is an isomorphism, then $\theta(\K\times \{0\})=\K\times\{0\}$, so 
$\theta(1,0)=(k_0,0)$ for some $k_0\in\K^\times$. Moreover, $\theta(0,z)=(\a(z),\b(z))$ for linear maps 
$\a\colon B\to \K$ and $\b\colon B\to B'$. It can be checked that
\begin{enumerate}
    \item \label{unos}$\varphi'\beta=\varphi$.
    \item \label{doss}$\b$ is an isomorphism from $B$ to $B'$.
    \item  \label{tress} $k_0\esc{z,z'}+\a(zz')=\a(z)\varphi(z')+\a(z')\varphi(z)+\esc{\b(z),\b(z')}'$
for all $z,z'\in B$.
\end{enumerate}
Conversely, it is straightforward to see that if
$\a\colon B\to K$ and $\b\colon B\to B'$ satisfy \eqref{unos}, \eqref{doss} and \eqref{tress}, then $\theta$ is an isomorphism from $\Aw(B,\esc{\cdot , \cdot},\varphi)$ to $\Aw(B',\esc{\cdot , \cdot}',\varphi')$.
Summarizing we claim 
\begin{proposition}
There is an isomorphism $\theta\colon \Aw(B,\esc{\cdot , \cdot},\varphi)\cong \Aw(B',\esc{\cdot , \cdot}',\varphi')$ if and only if there is an isomorphism $\b\colon B\to B'$ and a nonzero $k_0\in\K$ such that
\begin{enumerate}
\item $k_0\esc{z,z'}+\a(zz')=\a(z)\varphi(z')+\a(z')\varphi(z)+\esc{\b(z),\b(z')}'$ for any $z,z'\in B$, and
\item  $\varphi'\b=\varphi$.
\end{enumerate}
\end{proposition}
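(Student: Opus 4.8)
The plan is to prove the two implications separately: the forward one by reading off data from a given isomorphism, and the backward one by writing down an explicit candidate isomorphism and checking it. Throughout, the algebras $\Aw(B,\esc{\cdot , \cdot},\varphi)$ and $\Aw(B',\esc{\cdot , \cdot}',\varphi')$ are of the type produced by Theorem~\ref{macabel}, so by Lemma~\ref{grincho} each one has socle $\K(1,0)$ (conditions \eqref{gri1}--\eqref{gri2} hold).

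\emph{Forward direction.} Suppose $\theta$ is an isomorphism. Since any algebra isomorphism maps the socle onto the socle, $\theta(\K\times\{0\})=\K\times\{0\}$, so $\theta(1,0)=(k_0,0)$ for some $k_0\in\K^\times$. Write the linear map $\theta$ in coordinates as $\theta(0,z)=(\a(z),\b(z))$ with $\a\colon B\to\K$ and $\b\colon B\to B'$ linear, so that $\theta(\l,b)=(\l k_0+\a(b),\b(b))$. I would then substitute into the homomorphism identity $\theta(xy)=\theta(x)\theta(y)$ for three choices: first $x=(1,0)$, $y=(0,z)$; then $x=(0,z)$, $y=(0,z')$. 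Comparing the $B'$-coordinates in the second case gives $\b(zz')=\b(z)\b(z')$, so $\b$ is an algebra homomorphism; as $\theta$ is bijective and already pins down $\K\times\{0\}$, it follows that $\b$ is an isomorphism (item \eqref{doss}). Comparing the $\K$-coordinates in the first case gives $k_0\varphi(z)=k_0\varphi'(\b(z))$, hence $\varphi'\b=\varphi$ (item \eqref{unos}, condition (2)). Finally, comparing the $\K$-coordinates in the second case and substituting $\varphi'\b=\varphi$ yields exactly
\[k_0\esc{z,z'}+\a(zz')=\a(z)\varphi(z')+\a(z')\varphi(z)+\esc{\b(z),\b(z')}',\]
which is item \eqref{tress}, i.e.\ condition (1).

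\emph{Backward direction.} Conversely, given an isomorphism $\b\colon B\to B'$, a scalar $k_0\in\K^\times$, and the linear map $\a\colon B\to\K$ for which conditions (1) and (2) hold, define $\theta\colon\Aw(B,\esc{\cdot , \cdot},\varphi)\to\Aw(B',\esc{\cdot , \cdot}',\varphi')$ by $\theta(\l,b):=(\l k_0+\a(b),\b(b))$. It is linear, and bijective because $k_0\ne 0$ and $\b$ is bijective (an explicit inverse is $(\mu,b')\mapsto(k_0^{-1}(\mu-\a(\b^{-1}(b'))),\b^{-1}(b'))$). Multiplicativity is the computation above read in reverse: the $B'$-coordinates of $\theta(xy)$ and $\theta(x)\theta(y)$ agree since $\b$ is a homomorphism, and the $\K$-coordinates agree precisely because of identities (1) and (2). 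Hence $\theta$ is the desired isomorphism.

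\emph{Main difficulty.} I expect no serious obstacle: the argument is essentially the computation displayed in the paragraph preceding the statement, reorganized as an equivalence. The one conceptual input — and the point to state carefully — is that $\theta$ must carry $\K(1,0)$ onto $\K(1,0)$; this is where Lemma~\ref{grincho} and the standing assumption that $B,B'$ arise in the setting of Theorem~\ref{macabel} enter. After that, everything is routine coordinate bookkeeping.
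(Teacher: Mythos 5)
Your proposal is correct and follows essentially the same route as the paper: the paper's own justification is precisely the paragraph preceding the proposition, which decomposes $\theta$ via $\theta(1,0)=(k_0,0)$ (socle preservation) and $\theta(0,z)=(\a(z),\b(z))$, then reads off conditions (1)--(3) from the homomorphism identity and reverses the computation for the converse. Your write-up merely makes explicit the bookkeeping the paper leaves as ``it can be checked.''
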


So fix a $2$-dimensional evolution algebra $B$ and consider the problem of classifying the algebras $\Aw(B,\esc{\cdot , \cdot},\varphi)$ (where the only variables are the inner product and $\varphi$).
Consider the action $\aut(B)\times B^*\to B^*$ such that $\chi\cdot \varphi:=\varphi\chi^{-1}$ for any $\chi\in\aut(B)$ and $\varphi\in B^*$. Now, take $\varphi,\varphi'\in B^*$. If the orbits of $\varphi$ and $\varphi'$ (under the previous action) are different, then there is no isomorphism from $\Aw(B,\esc{\cdot , \cdot},\varphi)$ to $\Aw(B,\esc{\cdot , \cdot}',\varphi')$. If, on the contrary, the orbits of $\varphi$ and $\varphi'$ coincide, then $\varphi'=\varphi\beta$ for some isomorphism $\b\colon B'\to B$.
Thus $\Aw(B,\esc{\cdot , \cdot},\varphi)= \Aw(B,\esc{\cdot , \cdot},\varphi\beta)\cong \Aw(B,\esc{\cdot , \cdot}',\varphi)$.
So we focus on the problem of finding the isomorphism condition for 
\[\Aw(B,\esc{\cdot , \cdot},\varphi)\cong \Aw(B,\esc{\cdot , \cdot}',\varphi),\]
where the unique variable is the inner product in each case. Then the isomorphism exists if and only if there is linear map $\a\colon B\to\K$, and a nonzero $k_0\in\K$, such that
\[k_0\esc{z,z'}+\a(zz')=\a(z)\varphi(z')+\a(z')\varphi(z)+\esc{z,z'}'\] for any $z,z'\in B$. Define the group $\G:=\K\times B^*$ with the product 
$$(k,\a)(k',\a'):=(kk',k\a'+\a),$$
where $k,k'\in\K^\times$, $\a,\a'\in B^*$. The identity element of $\G$ is $(1,0)$. Moreover,
\begin{enumerate}
    \item We identify bilinear forms $B\times B\to\K$ with linear maps $B\otimes B\to\K$. Thus $\esc{\cdot , \cdot}(x\otimes y):=\esc{x,y}$ for $x,y\in B$.
    \item $\mu\colon B\otimes B\to B$ is the product of $B$, that is, $\mu(x\otimes y)=xy$ for $x,y\in B$.
    \item If $\a,\a'\in B^*$ then $\a\otimes\a'$ is the linear map $B\otimes B\to \K$ such that $(\a\otimes\a')(x\otimes y)=\a(x)\a'(y)$, with $x,y\in B$. We can define $\bullet \colon B^* \times B^* \to \K$ by $\a\bullet\a':=\a\otimes\a'+\a'\otimes\a$.
\end{enumerate}
So, there is an action
$$\G\times \hbox{Sym}^2(B)\to \hbox{Sym}^2(B)$$
such that
\begin{equation}\label{rebuz}
(k,\a)\cdot\esc{\cdot , \cdot}:=k\esc{\cdot , \cdot}+\a\mu-\a\bullet\varphi.
\end{equation}

Observe that the linear map in $\eqref{rebuz}$ is an action since $(1,0)\cdot\esc{\cdot , \cdot}=\esc{\cdot , \cdot}$, and
defining $\esc{\cdot , \cdot}':=k\esc{\cdot , \cdot}+\a\mu-\a\bullet\varphi$, we have
$$(k',\a')\cdot [(k,\a)\cdot \esc{\cdot , \cdot}]=(k',\a')\cdot \esc{\cdot , \cdot}'=k'\esc{\cdot , \cdot}'+\a'\mu-\a'\bullet\varphi=$$
$$k'(k\esc{\cdot , \cdot}+\a\mu-\a\bullet\varphi)+\a'\mu-\a'\bullet\varphi=$$
$$kk'\esc{\cdot , \cdot}+(k'\a+\a')\mu-(k'\a+\a')\bullet\varphi=$$
$$(kk',k'\a+\a')\cdot\esc{\cdot , \cdot}=\left((k',\a')(k,\a)\right)\cdot \esc{\cdot , \cdot}.$$
So, we can claim the following.

\begin{proposition}
In the previous setting, we have that the pair $(\G,\hbox{Sym}^2(B))$ is a moduli set for the class of algebras $\Aw(B,\esc{\cdot , \cdot},\varphi)$ with $B$ and $\varphi$ fixed.
\end{proposition}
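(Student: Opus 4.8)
The plan is to deduce the statement directly from two facts already in place: the isomorphism criterion for algebras of type four established just above, and the verification (carried out right before the statement) that the formula in $\eqref{rebuz}$ is a genuine action of $\G=\K\times B^{*}$ on $\hbox{Sym}^2(B)$. Recall that, by definition, to show $(\G,\hbox{Sym}^2(B))$ is a moduli set for the class $\C$ of algebras $\Aw(B,\esc{\cdot , \cdot},\varphi)$ (with $B$ and $\varphi$ fixed, and $\esc{\cdot , \cdot}$ ranging over $\hbox{Sym}^2(B)$) I must exhibit a bijection between $\hbox{Sym}^2(B)/\G$ and the set of isomorphism classes of $\C$. The natural candidate is the map sending the $\G$-orbit of $\esc{\cdot , \cdot}$ to the isomorphism class of $\Aw(B,\esc{\cdot , \cdot},\varphi)$; surjectivity of the underlying assignment $\esc{\cdot , \cdot}\mapsto\Aw(B,\esc{\cdot , \cdot},\varphi)$ is immediate from the very definition of $\C$. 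Hence the entire content reduces to proving, for $\esc{\cdot , \cdot},\esc{\cdot , \cdot}'\in\hbox{Sym}^2(B)$, the equivalence
\[\Aw(B,\esc{\cdot , \cdot},\varphi)\cong\Aw(B,\esc{\cdot , \cdot}',\varphi)\iff \esc{\cdot , \cdot}'\ \text{lies in the $\G$-orbit of}\ \esc{\cdot , \cdot};\]
from this, well-definedness of the map on orbits comes from the forward implication and injectivity from the backward one.

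The key step is to prove this equivalence by translating the isomorphism condition obtained just before the definition of $\G$ (where we already reduced, via the $\aut(B)$-action on $B^{*}$, to the case of the same $\varphi$ on both sides): $\Aw(B,\esc{\cdot , \cdot},\varphi)\cong\Aw(B,\esc{\cdot , \cdot}',\varphi)$ holds if and only if there are a linear map $\a\colon B\to\K$ and a nonzero $k_0\in\K$ with
\[k_0\esc{z,z'}+\a(zz')=\a(z)\varphi(z')+\a(z')\varphi(z)+\esc{z,z'}'\qquad(z,z'\in B).\]
Using that $\mu\colon B\otimes B\to B$ is the multiplication of $B$ and that $(\a\bullet\varphi)(z\otimes z')=\a(z)\varphi(z')+\varphi(z)\a(z')$, this identity is equivalent to
\[\esc{z,z'}'=\bigl(k_0\,\esc{\cdot , \cdot}+\a\mu-\a\bullet\varphi\bigr)(z\otimes z')\qquad(z,z'\in B),\]
i.e.\ to $\esc{\cdot , \cdot}'=(k_0,\a)\cdot\esc{\cdot , \cdot}$ with the action $\eqref{rebuz}$. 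Therefore an isomorphism between the two algebras exists precisely when $\esc{\cdot , \cdot}'$ lies in the $\G$-orbit of $\esc{\cdot , \cdot}$, which is exactly the asserted equivalence.

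It then remains only to assemble the pieces: since $\eqref{rebuz}$ is an action (already checked) and the equivalence above holds, the assignment $\esc{\cdot , \cdot}\mapsto\Aw(B,\esc{\cdot , \cdot},\varphi)$ factors through a well-defined, injective map $\hbox{Sym}^2(B)/\G\to\{\text{iso.\ classes of }\C\}$ which is moreover surjective, hence a bijection; this is precisely the definition of $(\G,\hbox{Sym}^2(B))$ being a moduli set for $\C$. I do not anticipate a genuine obstacle here: the one point requiring care is the bookkeeping translation between the isomorphism equation and the formula $\eqref{rebuz}$ for the action — in particular keeping track that $\a\bullet\varphi$ is the symmetrization $\a\otimes\varphi+\varphi\otimes\a$ — together with the implicit use of the earlier reduction that lets us assume the two algebras share the same $\varphi$; everything else is a direct citation of results already proved.
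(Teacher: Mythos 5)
Your proposal is correct and follows essentially the same route as the paper: the paper proves this proposition implicitly through the discussion immediately preceding it, namely by deriving the isomorphism criterion $k_0\esc{z,z'}+\a(zz')=\a(z)\varphi(z')+\a(z')\varphi(z)+\esc{z,z'}'$, rewriting it as $\esc{\cdot , \cdot}'=(k_0,\a)\cdot\esc{\cdot , \cdot}$ via \eqref{rebuz}, and verifying that this formula defines a group action. Your write-up merely makes explicit the bookkeeping (well-definedness, injectivity, surjectivity of the induced map on orbits) that the paper leaves to the reader.
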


\section*{Acknowledgments}
The third author was partially supported by Conselho Nacional de Desenvolvimento Cien\-t\'ifico e Tecnol\'ogico (CNPq) grant numbers 304487/2017-1 and 406122/2018-0  and Capes-PrInt grant number 88881.310538/2018-01 - Brazil.
The first and the last two authors are supported by the Junta de Andaluc\'{\i}a  through projects  FQM-336 and UMA18-FEDERJA-119 and  by the Spanish Ministerio de Ciencia e Innovaci\'on   through project  PID2019-104236GB-I00,  all of them with FEDER funds.

\end{document}